\documentclass[11pt,reqno]{amsart}

\usepackage[utf8]{inputenc}
\usepackage[T1]{fontenc}
\usepackage{hyperref}
\usepackage{url}
\usepackage{booktabs}
\usepackage{amsfonts}
\usepackage{nicefrac}
\usepackage{microtype}
\usepackage{xcolor}
\usepackage{macro_wth}
\usepackage{comment}
\usepackage{autonum}
\usepackage{enumitem}
\setlist[enumerate]{nosep}
\setlist[itemize]{nosep}
\usepackage[margin=1in]{geometry}
\usepackage{parskip}
\usepackage{amsaddr}
\makeatletter
\renewcommand{\paragraph}{\@startsection{paragraph}{4}{\z@}%
  {3.25ex \@plus1ex \@minus.2ex}%
  {-1em}%
  {\normalfont\normalsize\bfseries}}
\makeatother
\usepackage[numbers]{natbib}
\usepackage{multicol}

\usepackage{lipsum}
\RequirePackage{etex}
\usepackage{ocgx2}

\renewcommand{\url}[1]{\href{#1}{#1}}
\newcommand{\doi}[1]{DOI: \url{https://doi.org/#1}}
\definecolor{burntorange}{RGB}{204,85,0}
\newcommand{\imageSize}{0.48}
\newcommand{\imageSizeTwo}{.3}

\hypersetup{
  colorlinks=true,
  linkcolor=burntorange,
  citecolor=burntorange,
  urlcolor=burntorange,
  filecolor=burntorange
}

\makeatletter
\def\@tocline#1#2#3#4#5#6#7{\relax
  \ifnum #1>\c@tocdepth % then omit
  \else
    \par \addpenalty\@secpenalty\addvspace{#2}%
    \begingroup \hyphenpenalty\@M
    \@ifempty{#4}{%
      \@tempdima\csname r@tocindent\number#1\endcsname\relax
    }{%
      \@tempdima#4\relax
    }%
    \parindent\z@ \leftskip#3\relax \advance\leftskip\@tempdima\relax
    \rightskip\@pnumwidth plus4em \parfillskip-\@pnumwidth
    #5\leavevmode\hskip-\@tempdima
      \ifcase #1
       \or\or \hskip 1em \or \hskip 2em \else \hskip 3em \fi%
      #6\nobreak\relax
    \hfill\hbox to\@pnumwidth{\@tocpagenum{#7}}\par% <---- \dotfill -> \hfill
    \nobreak
    \endgroup
  \fi}
\makeatother

\makeatletter
\let\contentsname\@empty
\makeatother

\newcommand{\1}{1 \!\! 1}

\def\vec{\mathrm{vec}}
\def\<{\langle}
\def\>{\rangle}
\newcommand{\Fro}{\mathrm{F}}
\renewcommand{\ss}[1]{_{#1}}
\renewcommand{\ss}[1]{[#1]}

\renewcommand{\[}{\begin{equation}}
\renewcommand{\]}{\end{equation}}
\mathcode`l="8000
\begingroup
\makeatletter
\lccode`\~=`\l
\DeclareMathSymbol{\lsb@l}{\mathalpha}{letters}{`l}
\lowercase{\gdef~{\ifnum\the\mathgroup=\m@ne \ell \else \lsb@l \fi}}%
\endgroup

\newcommand{\iid}{{\rm i.i.d.}\,}

\usepackage{relsize}

\newcommand{\+}{{+}}
\renewcommand{\-}{{-}}

\renewcommand{\=}{\ensuremath{=}}

\renewcommand{\bchi}{{\boldsymbol \chi}}

\newcommand{\model}{\cM_{\theta, a}}

\newcommand{\modelshort}{\cM_{a}}
\newcommand{\modelshortt}{\cM_{a_t}}
\newcommand{\gradient}{\Psi(h, y, a)}
\newcommand{\matGradient}[2]{\Psi\Big(#1, #2, a_t\Big)}

\usepackage{fvextra}
\usepackage{color}
\makeatletter
\def\PY@reset{\let\PY@it=\relax \let\PY@bf=\relax%
    \let\PY@ul=\relax \let\PY@tc=\relax%
    \let\PY@bc=\relax \let\PY@ff=\relax}
\def\PY@tok#1{\csname PY@tok@#1\endcsname}
\def\PY@toks#1+{\ifx\relax#1\empty\else%
    \PY@tok{#1}\expandafter\PY@toks\fi}
\def\PY@do#1{\PY@bc{\PY@tc{\PY@ul{%
    \PY@it{\PY@bf{\PY@ff{#1}}}}}}}
\def\PY#1#2{\PY@reset\PY@toks#1+\relax+\PY@do{#2}}

\@namedef{PY@tok@w}{\def\PY@tc##1{\textcolor[rgb]{0.73,0.73,0.73}{##1}}}
\@namedef{PY@tok@c}{\let\PY@it=\textit\def\PY@tc##1{\textcolor[rgb]{0.24,0.48,0.48}{##1}}}
\@namedef{PY@tok@cp}{\def\PY@tc##1{\textcolor[rgb]{0.61,0.40,0.00}{##1}}}
\@namedef{PY@tok@k}{\let\PY@bf=\textbf\def\PY@tc##1{\textcolor[rgb]{0.00,0.50,0.00}{##1}}}
\@namedef{PY@tok@kp}{\def\PY@tc##1{\textcolor[rgb]{0.00,0.50,0.00}{##1}}}
\@namedef{PY@tok@kt}{\def\PY@tc##1{\textcolor[rgb]{0.69,0.00,0.25}{##1}}}
\@namedef{PY@tok@o}{\def\PY@tc##1{\textcolor[rgb]{0.40,0.40,0.40}{##1}}}
\@namedef{PY@tok@ow}{\let\PY@bf=\textbf\def\PY@tc##1{\textcolor[rgb]{0.67,0.13,1.00}{##1}}}
\@namedef{PY@tok@nb}{\def\PY@tc##1{\textcolor[rgb]{0.00,0.50,0.00}{##1}}}
\@namedef{PY@tok@nf}{\def\PY@tc##1{\textcolor[rgb]{0.00,0.00,1.00}{##1}}}
\@namedef{PY@tok@nc}{\let\PY@bf=\textbf\def\PY@tc##1{\textcolor[rgb]{0.00,0.00,1.00}{##1}}}
\@namedef{PY@tok@nn}{\let\PY@bf=\textbf\def\PY@tc##1{\textcolor[rgb]{0.00,0.00,1.00}{##1}}}
\@namedef{PY@tok@ne}{\let\PY@bf=\textbf\def\PY@tc##1{\textcolor[rgb]{0.80,0.25,0.22}{##1}}}
\@namedef{PY@tok@nv}{\def\PY@tc##1{\textcolor[rgb]{0.10,0.09,0.49}{##1}}}
\@namedef{PY@tok@no}{\def\PY@tc##1{\textcolor[rgb]{0.53,0.00,0.00}{##1}}}
\@namedef{PY@tok@nl}{\def\PY@tc##1{\textcolor[rgb]{0.46,0.46,0.00}{##1}}}
\@namedef{PY@tok@ni}{\let\PY@bf=\textbf\def\PY@tc##1{\textcolor[rgb]{0.44,0.44,0.44}{##1}}}
\@namedef{PY@tok@na}{\def\PY@tc##1{\textcolor[rgb]{0.41,0.47,0.13}{##1}}}
\@namedef{PY@tok@nt}{\let\PY@bf=\textbf\def\PY@tc##1{\textcolor[rgb]{0.00,0.50,0.00}{##1}}}
\@namedef{PY@tok@nd}{\def\PY@tc##1{\textcolor[rgb]{0.67,0.13,1.00}{##1}}}
\@namedef{PY@tok@s}{\def\PY@tc##1{\textcolor[rgb]{0.73,0.13,0.13}{##1}}}
\@namedef{PY@tok@sd}{\let\PY@it=\textit\def\PY@tc##1{\textcolor[rgb]{0.73,0.13,0.13}{##1}}}
\@namedef{PY@tok@si}{\let\PY@bf=\textbf\def\PY@tc##1{\textcolor[rgb]{0.64,0.35,0.47}{##1}}}
\@namedef{PY@tok@se}{\let\PY@bf=\textbf\def\PY@tc##1{\textcolor[rgb]{0.67,0.36,0.12}{##1}}}
\@namedef{PY@tok@sr}{\def\PY@tc##1{\textcolor[rgb]{0.64,0.35,0.47}{##1}}}
\@namedef{PY@tok@ss}{\def\PY@tc##1{\textcolor[rgb]{0.10,0.09,0.49}{##1}}}
\@namedef{PY@tok@sx}{\def\PY@tc##1{\textcolor[rgb]{0.00,0.50,0.00}{##1}}}
\@namedef{PY@tok@m}{\def\PY@tc##1{\textcolor[rgb]{0.40,0.40,0.40}{##1}}}
\@namedef{PY@tok@gh}{\let\PY@bf=\textbf\def\PY@tc##1{\textcolor[rgb]{0.00,0.00,0.50}{##1}}}
\@namedef{PY@tok@gu}{\let\PY@bf=\textbf\def\PY@tc##1{\textcolor[rgb]{0.50,0.00,0.50}{##1}}}
\@namedef{PY@tok@gd}{\def\PY@tc##1{\textcolor[rgb]{0.63,0.00,0.00}{##1}}}
\@namedef{PY@tok@gi}{\def\PY@tc##1{\textcolor[rgb]{0.00,0.52,0.00}{##1}}}
\@namedef{PY@tok@gr}{\def\PY@tc##1{\textcolor[rgb]{0.89,0.00,0.00}{##1}}}
\@namedef{PY@tok@ge}{\let\PY@it=\textit}
\@namedef{PY@tok@gs}{\let\PY@bf=\textbf}
\@namedef{PY@tok@ges}{\let\PY@bf=\textbf\let\PY@it=\textit}
\@namedef{PY@tok@gp}{\let\PY@bf=\textbf\def\PY@tc##1{\textcolor[rgb]{0.00,0.00,0.50}{##1}}}
\@namedef{PY@tok@go}{\def\PY@tc##1{\textcolor[rgb]{0.44,0.44,0.44}{##1}}}
\@namedef{PY@tok@gt}{\def\PY@tc##1{\textcolor[rgb]{0.00,0.27,0.87}{##1}}}
\@namedef{PY@tok@err}{\def\PY@bc##1{{\setlength{\fboxsep}{\string -\fboxrule}\fcolorbox[rgb]{1.00,0.00,0.00}{1,1,1}{\strut ##1}}}}
\@namedef{PY@tok@kc}{\let\PY@bf=\textbf\def\PY@tc##1{\textcolor[rgb]{0.00,0.50,0.00}{##1}}}
\@namedef{PY@tok@kd}{\let\PY@bf=\textbf\def\PY@tc##1{\textcolor[rgb]{0.00,0.50,0.00}{##1}}}
\@namedef{PY@tok@kn}{\let\PY@bf=\textbf\def\PY@tc##1{\textcolor[rgb]{0.00,0.50,0.00}{##1}}}
\@namedef{PY@tok@kr}{\let\PY@bf=\textbf\def\PY@tc##1{\textcolor[rgb]{0.00,0.50,0.00}{##1}}}
\@namedef{PY@tok@bp}{\def\PY@tc##1{\textcolor[rgb]{0.00,0.50,0.00}{##1}}}
\@namedef{PY@tok@fm}{\def\PY@tc##1{\textcolor[rgb]{0.00,0.00,1.00}{##1}}}
\@namedef{PY@tok@vc}{\def\PY@tc##1{\textcolor[rgb]{0.10,0.09,0.49}{##1}}}
\@namedef{PY@tok@vg}{\def\PY@tc##1{\textcolor[rgb]{0.10,0.09,0.49}{##1}}}
\@namedef{PY@tok@vi}{\def\PY@tc##1{\textcolor[rgb]{0.10,0.09,0.49}{##1}}}
\@namedef{PY@tok@vm}{\def\PY@tc##1{\textcolor[rgb]{0.10,0.09,0.49}{##1}}}
\@namedef{PY@tok@sa}{\def\PY@tc##1{\textcolor[rgb]{0.73,0.13,0.13}{##1}}}
\@namedef{PY@tok@sb}{\def\PY@tc##1{\textcolor[rgb]{0.73,0.13,0.13}{##1}}}
\@namedef{PY@tok@sc}{\def\PY@tc##1{\textcolor[rgb]{0.73,0.13,0.13}{##1}}}
\@namedef{PY@tok@dl}{\def\PY@tc##1{\textcolor[rgb]{0.73,0.13,0.13}{##1}}}
\@namedef{PY@tok@s2}{\def\PY@tc##1{\textcolor[rgb]{0.73,0.13,0.13}{##1}}}
\@namedef{PY@tok@sh}{\def\PY@tc##1{\textcolor[rgb]{0.73,0.13,0.13}{##1}}}
\@namedef{PY@tok@s1}{\def\PY@tc##1{\textcolor[rgb]{0.73,0.13,0.13}{##1}}}
\@namedef{PY@tok@mb}{\def\PY@tc##1{\textcolor[rgb]{0.40,0.40,0.40}{##1}}}
\@namedef{PY@tok@mf}{\def\PY@tc##1{\textcolor[rgb]{0.40,0.40,0.40}{##1}}}
\@namedef{PY@tok@mh}{\def\PY@tc##1{\textcolor[rgb]{0.40,0.40,0.40}{##1}}}
\@namedef{PY@tok@mi}{\def\PY@tc##1{\textcolor[rgb]{0.40,0.40,0.40}{##1}}}
\@namedef{PY@tok@il}{\def\PY@tc##1{\textcolor[rgb]{0.40,0.40,0.40}{##1}}}
\@namedef{PY@tok@mo}{\def\PY@tc##1{\textcolor[rgb]{0.40,0.40,0.40}{##1}}}
\@namedef{PY@tok@ch}{\let\PY@it=\textit\def\PY@tc##1{\textcolor[rgb]{0.24,0.48,0.48}{##1}}}
\@namedef{PY@tok@cm}{\let\PY@it=\textit\def\PY@tc##1{\textcolor[rgb]{0.24,0.48,0.48}{##1}}}
\@namedef{PY@tok@cpf}{\let\PY@it=\textit\def\PY@tc##1{\textcolor[rgb]{0.24,0.48,0.48}{##1}}}
\@namedef{PY@tok@c1}{\let\PY@it=\textit\def\PY@tc##1{\textcolor[rgb]{0.24,0.48,0.48}{##1}}}
\@namedef{PY@tok@cs}{\let\PY@it=\textit\def\PY@tc##1{\textcolor[rgb]{0.24,0.48,0.48}{##1}}}

\def\PYZus{\char`\_}

\def\PYZlt{\char`\<}

\def\PYZhy{\char`\-}

\def\PYZdq{\char`\"}

\makeatother

\makeatletter
\@addtoreset{equation}{section}

\makeatother
\usepackage{etoolbox}
\allowdisplaybreaks

\newcommand{\testfunc}{{\bf test}}

\begin{document}

\title{Decoupled Descent: Exact Test Error Tracking Via Approximate Message Passing}

\author{Max Lovig}
\address{Statistics and Data Science, Yale University; \texttt{max.lovig@yale.edu}}

\begin{abstract}
In modern parametric model training, full-batch gradient descent (and its variants) suffers due to progressively stronger biasing towards the exact realization of training data; this drives the systematic ``generalization gap'', where the train error becomes an unreliable proxy for test error. Existing approaches either argue this gap is benign through complex analysis or sacrifice data to a validation set.
In contrast, we introduce decoupled descent (DD), a novel theory-based training algorithm that satisfies a train-test identity---enforcing the train error to asymptotically track the test error for stylized Gaussian mixture models. Within this specific regime, leveraging approximate message passing theory, DD iteratively cancels the biases due to data reuse, rigorously demonstrating the feasibility of zero-cost validation and $100\%$ data utilization. Moreover, DD is governed by a low-dimensional state evolution recursion, rendering the dynamics of the algorithm transparent and tractable. We validate DD on XOR classification, yielding superior performance compared to GD; additionally, we implement noisy MNIST and non-linear probing of CIFAR-10, demonstrating that even when our stylized assumptions are relaxed, DD narrows the generalization gap compared to GD.
\end{abstract}

\maketitle

\newpage
\vspace*{\fill}

{\centering C{\footnotesize ONTENTS}
\begin{multicols}{2}
\tableofcontents
\end{multicols}}

\vspace*{\fill}
\newpage

\section{Introduction}

Consider training data $(X,y) = (x_i, y_i)_{i \in [n]} \in \RR^{n \times d}$ where $x_i \in \RR^d$ and $y_i \in \RR$ are \iid $(x_i, y_i) \sim \PP_{x,y}$. We model this data using a parametric function $\cM_{\beta}: \RR^d
\to \RR$ with parameters $\beta \in \RR^p$ under the loss function $\cL(\cM_\beta(x), y): \RR
\times \RR \to \RR$. We learn $\beta$ by descending the objective
\[L(\beta) = \frac{1}{n} \sum_{i = 1}^n \cL(\cM_\beta(x_i), y_i),\label{eq:trainDesc}\]
using some variant of gradient descent (GD). The desired goal, however, is not to
directly minimize \eqref{eq:trainDesc} but to minimize the
\textit{test error}, i.e. with $(\check x,\check y) \sim \PP_{x,y}$, minimize 
\[\check L(\beta) = \EE_{\check x, \check y}\left[\cL(\cM_{\beta}(\check x), \check y)\right].\label{eq:testDesc}\]
In the classical regime of $n \to \infty$ with $d,p$ fixed, $L(\beta)$ is consistent for $\check L(\beta)$. 
Unfortunately, as seen below, in the modern regime of $n, d \to \infty$ with $n \asymp d \asymp p$, the convergence $L(\beta) \to \check L(\beta)$ fails.

\begin{example}
Consider high-dimensional linear regression, for $n$ \iid samples $x_i \sim \cN(0, \Id_d/d)$ with labels $y_i = 0$. We fit $\cM_\beta(x) = x^\top \beta$ for $\beta \in \RR^d$ under MSE loss $\cL(\hat y, y) = (\hat y - y)^2 = (x^\top \beta)^2$. Initializing $\beta_1$ independent of $(X,y)$ and setting $M = X^\top X$, the GD update $\beta_2 = (\Id - \eta X^\top X)\beta_1$ has train \eqref{eq:trainDesc} and test \eqref{eq:testDesc} errors,
\[L(\beta_2) = \beta_1^\top (\Id - \eta M) M (\Id - \eta M) \beta_1/n,\quad \check L(\beta_2) = \beta_1^\top (\Id - \eta M) (\Id - \eta M) \beta_1/d.\]  If $n \to \infty$ with $d$ bounded then $M/n \to \Id_{d}/d$ almost surely so $L(\beta_2) \to \check L(\beta_2)$; however, when $n,d \to \infty$ with $n \asymp d$, $M$ no longer concentrates to the identity and thus $L(\beta_2) - \check L(\beta_2) \not\to 0$.
\end{example}

Consequently, in modern scaling regimes, a decrease in the training objective for GD may not represent a decrease in test error. Previous work on GD must either justify that this mismatch is benign or sacrifice a portion of the training data to validate test error performance.
In this work, we construct a class of algorithms termed decoupled descent (DD). \textbf{For a set of stylized high-dimensional problems DD satisfies a train-test identity---any mismatch between \eqref{eq:trainDesc} and \eqref{eq:testDesc} is forbidden}. Consequently, this method allows for validation without sacrificing any training data.

\begin{theorem}[Informal version of Theorem \ref{thm:ddLoss}]\label{thm:introThm}
For $n$ \iid draws $(x_i)_{i \in [n]}$ from a $d$-dimensional Gaussian mixture with finitely many modes and responses $(y_i)_{i \in [n]}$, any DD iterate $\beta_t$ satisfies 
\[{\bf Training\;Error}(\beta_t) - {\bf Test\;Error}(\beta_t) \to 0, \quad \text{almost surely as $n,d \to \infty$ with $n/d \to \alpha$.}\]
\end{theorem}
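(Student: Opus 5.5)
The plan is to realize decoupled descent as an approximate message passing (AMP) iteration with Onsager-type corrections. I would then prove the train--test identity by reading off both errors from the same state evolution. Concretely, write the DD iterate $\beta_t$ through the pre-activations $h_t = X\beta_t \in \RR^n$ (or $X\Theta_t$ in the multi-output case). The DD update is $\beta_{t+1} = \beta_t - \eta X^\top \Psi(h_t, y, a_t)$ with a memory correction subtracted, where $a_t$ are the auxiliary parameters. The correction is designed so that the pair $(\beta_t, h_t)$ follows an AMP recursion for a Gaussian mixture design. In that recursion the effective field at sample $i$ is replaced by $\tilde h_{t,i}$, which behaves like $x_i^\top\beta_t$ computed as if $x_i$ were a fresh sample independent of $\beta_t$.

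The first step is to invoke the AMP state evolution theorem for Gaussian mixtures with finitely many modes. One route is to condition on the mode labels and apply the block or spatially coupled AMP results for Gaussian matrices with non-identity means and covariances. The conclusion I want is that, almost surely as $n,d\to\infty$ with $n/d\to\alpha$, the empirical distribution of $(\tilde h_{t,i}, y_i, \text{mode}_i)_{i\le n}$ converges in Wasserstein-2 to the law of $(\mu_{c}^\top m_t + Z_t, \check y, c)$. Here $c$ is the mode, $m_t$ are the limiting overlaps of $\beta_t$ with the mode means, and $Z_t$ is Gaussian with variance given by the state evolution.

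The second step is to compute the test error of $\beta_t$ directly. A fresh $\check x$ in mode $c$ gives $\check x^\top\beta_t = \mu_c^\top\beta_t + \Sigma_c^{1/2}$-Gaussian noise with variance $\beta_t^\top\Sigma_c\beta_t$. State evolution identifies these quantities with $m_t$ and the variance of $Z_t$. Hence $\check L(\beta_t)\to \EE[\cL(\mu_c^\top m_t+Z_t,\check y)]$, using pseudo-Lipschitz growth of $\cL$. The third step is to define the DD training error as $\frac1n\sum_i \cL(\tilde h_{t,i}, y_i)$, i.e.\ evaluated at the corrected fields. This is the sense in which training error is tracked. The same Wasserstein-2 convergence then gives the same limit, so the difference tends to $0$ almost surely.

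I expect the main obstacle to be the first step: making the decoupled update an honest AMP iteration. This requires (i) choosing the Onsager coefficients as empirical averages of $\partial_h\Psi$, which must themselves converge; (ii) handling the data-dependent $a_t$ and the nonseparable, per-mode structure of the design; and (iii) justifying that the replacement of the theoretical Onsager terms by empirical ones is asymptotically negligible. I would handle (iii) by a standard perturbation/induction argument over $t$, using Lipschitz continuity of $\Psi$ in $(h,a)$ together with the uniform operator-norm bound $\|X\|_{\mathrm{op}}=O(1)$.
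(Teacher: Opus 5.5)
Your proposal is correct and follows essentially the paper's argument. The paper conditions on the mode labels and writes $X=\frac1d\sum_j e_{B_j}\mu_j^\top+Z$. It then freezes $\mu_j^\top\theta_t/d$, $e_{B_j}^\top\hat h_t/d$ and $a_t$ at their limits, justified by the same Lipschitz plus bounded-$\|Z\|_{\rm op}$ induction you describe, and recenters to obtain a full-history matrix-valued AMP on the pure noise. Finally it reads the training loss at the corrected fields and the test loss off as the same function of $(m_{j,t},\Omega_t[t,t],\bar a_t)$, since the state-evolution variance of the corrected field equals $\lim d^{-1}\theta_t^\top\theta_t$.

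One caveat: drop the general-covariance ($\Sigma_c$, spatially coupled) generality. The DD corrections $-\alpha\langle\nabla_h g\rangle\theta_t$ and the memory coefficients $\eta_1\eta_0^{(t-1)-s}$ are the correct Onsager terms only because every mode has isotropic covariance $\Id_d/d$. Isotropy is also what makes your Gaussian $Z_t$ mode-independent, as your Step 1 presupposes.
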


As a benefit of Theorem \ref{thm:introThm}, DD enables \textbf{zero-cost validation}, meaning the training iterate with the lowest train error also has the lowest test error asymptotically. By eliminating the need for the validation set, users achieve 100\% data utilization. Moreover, techniques such as hyperparameter tuning and early stopping on the train error immediately translate performance improvements to the population-level test error.

DD originates from a carefully designed approximate message passing (AMP) algorithm \cite{donoho2009, feng2021,javanmard12}. Thus, DD admits a tractable low-dimensional law for the train and test error. This permits a simple physical interpretation for DD's dynamics and gives rigorous asymptotic guarantees on the behavior of the test error, enabling a principled way to implement descent algorithms. While modern practice uses stochastic gradients, eliminating the full-batch generalization gap is a fundamental first step.

To conclude this section, we outline our contributions below. 
\begin{itemize}
    \item We introduce a family of training algorithms, termed
    \textit{decoupled descent}, for a set of stylized learning problems with Gaussian mixture data, parametric models and general loss functions, covering a wide range of supervised learning tasks.
    \item In contrast to GD, where the
    training error is a biased proxy that worsens over time,
    we show that parameters trained by any DD algorithm enforce the train error  
    to asymptotically equal the test error at each iterate.
    \item We derive a sequence of low-dimensional recursions that track the
    algorithm's dynamical trajectory. This provides insight into how DD algorithms explicitly control the test error.
    \item We illustrate the effectiveness of DD using various applications:
\begin{itemize}
    \item \textbf{XOR Classification:} Validates DD under stylized model assumptions and demonstrates better test error performance compared to GD.
    \item \textbf{Noisy MNIST (0 vs. 8):} Demonstrates that DD is robust to different noise distributions and that the train-test identity is consistent across varying ratios of parameter count to data dimension. 
    \item \textbf{CIFAR-10 Probing:} Applies DD to (possibly whitened) ResNet-18 embeddings. Although the train-test identity is not exact, DD narrows the generalization gap compared to GD despite the lack of Gaussian structure.
\end{itemize}
    %\item We consider some applications: (1) To test DD within our model assumptions, we consider a simple simple XOR classification problem and show improvements over GD; (2) To demonstrate robustness to the underlying noise distribution and the ratio of parameters count to data dimension, we run a classification of zeros versus eights in a noisy MNIST dataset;
    %(3) To demonstrate how DD can be applied to problems without clear noise structure, we train a non-linear probing head on ResNet-18 embeddings for CIFAR-10. In this case the train-test identity does not exactly hold but reduces the generalization gap compared to GD, suitable preprocessing can reduce this gap further.
\end{itemize}

We provide an overview of the notation for this paper in Section \ref{sec:notation}.
\section{The Train-Test Identity And Its Algorithmic Consequences} \label{sec:trainTest}

\subsection{Correcting Full-batch GD}\label{sec:correctingGD}

Let the $i$-th row (or element) of $X \in \RR^{n \times d}$ and $y \in \RR^n$ have $(x_i, y_i) \sim \PP_{x,y}$ where $\PP_{x}$ is a Gaussian mixture with modes $\mu_1, \dots, \mu_J$; conditioned on mode $j$, we have $y_i \sim \PP_j$ independent of the Gaussian realization (see \eqref{eq:dist} for more details).
Consider parameter $\beta = (\theta, a)$ where $\theta \in \RR^{d \times L}$ and $a \in \RR^{L'}$ (with $
L, L'$ bounded) alongside a parametric function $\cM_{\theta, a}(x) = \cM_{a}(x^\top \theta)$ with $\cM_a: \RR^{L} \to \RR$. We use the following shorthand.

\begin{definition}\label{def:lossGrad}
    Given a parametric model $\model(x) = \modelshort(x^\top \theta) = \modelshort(h)$ and loss function
    $\cL$, let $\gradient = \cL(\modelshort(h), y, a)$ and further define its derivatives,
    $\nabla_h \gradient: \RR^{L} \times \RR \times \RR^{L'} \to \RR^L$,
    $\nabla_{a} \gradient: \RR^{L} \times \RR \times \RR^{L'} \to \RR^{L'}$, and Hessian
    $\nabla_h^2 \gradient: \RR^{L} \times \RR \times \RR^{L'} \to \RR^{L \times L}$.
Moreover, when $h \in \RR^{n \times L}$ and $y \in \RR^n$, each of the above functions is its corresponding row-wise application, for example $\Psi(h, y, a)_i = \Psi(h_{i}, y_i, a)$.
\end{definition}
We then run full-batch GD, depending on learning rate parameters $\eta, \gamma > 0$, by the iteration
    \[\label{eq:gd}
    h_t = X \theta_t, \quad \hat h_t = \nabla_h \matGradient{h_t}{y}, \quad \theta_{t+1} = \theta_t - \eta  X^\top \hat h_t, \quad a_{t+1} = a_t - \frac{\gamma}{n} \sum_{i=1}^n \nabla_{a}\Psi(h_t,
    y_i, a_t).\]
As explained in the introduction, this algorithm produces iterates $\theta_t, a_t$ where the training error does not track the test error. We correct this behavior using the following \textit{pure decoupled descent} iteration,
    \[\label{eq:pureDd}\begin{gathered} h_t = X\theta_t\+\eta\!\sum_{s=1}^{t-1}\!\hat h_{s}, \quad \hat h_t = \nabla_h\matGradient{h_t}{y},\\ 
    \hspace{-6pt}\theta_{t+1} = \theta_t\-\eta \Big(\!X^\top \hat h_t\-\alpha  \Big(\frac{1}{n} \!\sum_{i=1}^n
    \!\nabla_h^2\matGradient{h_{t,i}}{y_i} \!\Big)\theta_t \!\Big), \quad a_{t+1} = a_t\-\frac{\gamma}{n}\!\sum_{i=1}^n\!\nabla_{a}\!\matGradient{\!h_{t,i}}{y_i}. \end{gathered}\]

DD adds the correction terms $\eta \sum_{s=1}^{t-1} \hat
h_{s}$, $\eta \alpha \left( \frac{1}{n}\sum_{i=1}^n
\nabla_h^2 \matGradient{h_{t,i}}{y_i}\right)$ applied to the pre-activation $X\theta_t$ and gradient of $\theta_t$, respectively. These
terms cancel correlations due to reuse of data matrix $X$,
avoiding the train-test disconnect that GD suffered from in
the introduction. 

\subsection{The Train-Test Identity}
Next, we codify the notion of train and test error equality from Theorem \ref{thm:introThm}.
Consider the algorithm $\cA(X,y) \mapsto (h_1, \dots, h_T, \theta_1, \dots, \theta_T,
a_1, \dots, a_T)$ where, for each $t \in [T]$, $(h_t, \theta_t, a_t) \in \RR^{n \times L} \times \RR^{d \times L} \times \RR^{L'}$. 
When $\cA$ represents training for a parametric model, $h_t$ represents the pre-activations of our $n$ samples input into $\modelshortt$, depending on the parameters $a_t$. Then, $\theta_t$ represents the parameter we apply to new test examples at time $t$ of training. 
The algorithm $\cA$ satisfies the \textit{train-test identity} if the following holds:

Let $(X,y)$ have \iid rows $(x_i, y_i) \sim \PP_{x,y}$ specified in Section \ref{sec:correctingGD}, $(\check x, \check y) \sim \PP_{x,y}$ and $\cL$ be a loss function. For all $t \in [T]$, the following limit holds almost surely with respect to the data $(X,y)$,
\[\lim_{n,d \to \infty}\frac{1}{n} \sum_{i=1}^n \cL(\modelshortt(h_{t,i}), y_i) -
\EE_{\check x, \check y}[\cL(\modelshortt(\check x^\top \theta_t), \check y)] = 0.\label{eq:trainTestEquiv}\] 
 
Notably, the train-test identity does not require the loss to decrease. Assuming the train-test identity holds, Appendix \ref{sec:reduceTest} provides a method to monotonically decrease the test error.

\subsection{Natural Algorithmic Consequences}\label{sec:algoConq}

Algorithms satisfying the train-test identity (for example pure DD) can implement a ``zero-cost validation'' phenomenon. Standard practice saves an $\epsilon$ proportion of the data to estimate the test error with the average of the trained model's loss on the unseen validation set. This introduces a frustrating tradeoff where 
one wants $\epsilon$ to be small to maximize the data involved in training but not too small where the estimator for the test error becomes unreliable.
Thus, a zero-cost validation
method, i.e. when all data points are used directly in training, is desirable.
This suggests the train-test identity as an algorithmic principle for training parameters; some benefits of this principle are given below. 

\begin{itemize}
    \item \textbf{Early Stopping (Online):} Denoting $E_t \= \frac{1}{n}\sum_{i=1}^n\cL(\cM_{a_{t}}(h_{t,i}), y_i)$, let $D_{t} \= E_{t+1} - E_T$ and $\tilde D_{t} = \log(E_{t+1}) - \log(E_t)$. Stopping when $D_t \geq \epsilon$ or $\tilde D_t \leq \log(1-\epsilon)$ provides a certificate that the subsequent update will not improve the test error by the specified threshold. In particular, $\epsilon = 0$ guarantees the test error is non-increasing during training.
    \item \textbf{Early Stopping (Offline):} Save a subsequence of iterates $\cT \subseteq [T]$ and select the specific parameters $(\theta_{t^*}, a_{t^*})$ that minimize the train loss $E_t$ across the saved time steps.
    \item \textbf{Initializations/Hyperparameter Tuning:} Conduct many parallel runs with varying initializations or hyperparameters (e.g. $\eta, \gamma$) and select the configuration that minimizes the train error, and by the train-test identity, minimizes the test error.
    \item \textbf{Architecture Search:} Optimize model complexity (dimension $L$ or architecture $\mathcal{M}_a$) by selecting the run with the lowest train error; this equivalently identifies the optimal test error architecture. We expect an online method for architecture selection is also possible.
\end{itemize}

\section{Decoupled Descent}

%\subsection{Data Model, Parametric Functions and Training Protocals}

\paragraph{Data Model and Parametric Function} For bounded $J \in \NN$, signal vectors $\mu_1, \dots, \mu_J$, response laws $\PP_1, \dots, \PP_J$, and class probabilities $p_1, \dots, p_J$, consider the Gaussian mixture classification model,
\[(x_i, y_i) \sim \left(\cN\left(\frac{\mu_j}{d}, \frac{\Id_d}{d} \right) \otimes \PP_j\right) \text{ with probability }p_j.\label{eq:dist}\]
We then define the matrix-vector pair $X \in \RR^{n \times d}$, $y \in \RR^n$ where the $i$-th row of $X$ and $y$ are \iid draws $x_i$ and $y_i$. We consider the limit $n,d \to \infty$ with $n/d \to \alpha$. The data matrix
$X$ is commonly seen in mean field analysis of Gaussian mixture models
\cite{mignacco2021}. Specific statistical problems using data \eqref{eq:dist} are provided in Example \ref{ex:data}.
\begin{remark}\label{rm:lowDimData}
    This work requires that $J$ is bounded, we expect this can be relaxed to a $J \to \infty$
    limit after the limit $n,d \to \infty$. This naturally follows by designing finer and finer
    discrete distribution approximations to a low-dimensional data-generating process. Such a low-dimensional process is seen in Example \ref{ex:data} (3) where we could instead consider signal vectors $\mu \sim c v$ with $v \in \RR^d$ and $c$ is endowed some prior distribution supported on a compact interval. 
    Although the finite mixture assumption on the signal is stylized, it establishes a rigorous framework for isolating and correcting data-reuse bias in high-dimensional dynamics.
\end{remark}
For parameters $\theta \in \RR^{d \times L}$ and $a \in \RR^{L'}$, with $L, L'$ bounded, define $\model(x): \RR^d \to \RR$ with $\model(x) = \modelshort(x^\top \theta)$,
where $\modelshort: \RR^L \to \RR$ uses parameter $a$.
To evaluate the performance of this model, we consider a loss
function $\cL(\hat y, y): \RR \times \RR \to \RR$. Specific parametric model and loss pairs following this format are given in Example \ref{ex:mlp}.

\paragraph{Training Protocol} Now, we introduce the general family of DD algorithms. 

\begin{definition}
    Let functions $g: \RR^{L} \times \RR \times \RR^{L'} \to \RR^L$, $f: \RR^{L} \times \RR \times \RR^{L'} \to \RR^{L'}$ and hyperparameters $\eta_0, \eta_1, \gamma_0, \gamma_1$ parameterize \textit{decoupled descent}. When $h \in \RR^{n \times L}$ and $y \in \RR^n$, we denote $g(h,y,a)_i = g(h_i, y_i, a)$ for each $i \in [n]$ as the row wise application of $g$; consider the iteration,
\[\label{eq:dd}\begin{gathered}
    h_t = X \theta_t + \eta_{1} \sum_{s=1}^{t-1} \eta_{0}^{(t-1)-s} \hat h_{s}, \quad \hat h_t = g(h_t, y, a_t)\\
    \tilde \theta_t = X^\top \hat h_t - \alpha  \left(\frac{1}{n} \sum_{i=1}^n
    \nabla_h g(h_{t,i}, y_i, a_t) \right)\theta_t\\ 
    \theta_{t+1} = \eta_{0} \theta_t - \eta_{1} \tilde \theta_t, \quad a_{t+1} = \gamma_{0} a_t - \gamma_{1} \frac{1}{n} \sum_{i=1}^n f(h_{t,i}, y_i, a_t). \end{gathered}\]
\end{definition}

To provide a streamlined analysis of DD algorithms, we make strong 
assumptions on the above setting. We describe them informally below, see Appendix \ref{sec:assumption} for the formal statement.

\begin{assumption}[Informal Version Of Assumption \ref{as:main0}]\label{as:main00}Assume that:

\begin{enumerate}
    \item The rows of $(X,y)$ are $\iid$\! from distribution \eqref{eq:dist} where $n,d \to \infty$ with $n/d \to \alpha \in (0, \infty)$. Moreover, we assume that $d^{-1} \mu_j^\top \mu_k \to \chi_{j,k}$ almost surely as $n,d \to \infty$.
    \item Initializations $\theta_1\!, a_1$ are data-independent with bounded limiting norms and signal alignment.
    \item Uniformly over $y$, the functions $f,g$ (and $\nabla_h f, \nabla_h g$) are Lipschitz and bounded in $h$ and $a$.
    \item $\Psi$ is $(C^4, C^2) \cap {\rm Lip}$ in $(h,a)$ with bounded expected derivatives under distribution \eqref{eq:dist}.
\end{enumerate}
\end{assumption}

\begin{remark}[Universality and more general activation functions]\label{rm:universality}
As DD is a designed AMP algorithm, we conjecture our results are robust to relaxations 
of Assumption \ref{as:main00}. Indeed, AMP is known to exhibit \textit{universality} and we empirically demonstrate this robustness in Section \ref{sec:experiments}, for example:
    \begin{enumerate}
        \item Previous works allow one to replace the Gaussian noise in \eqref{eq:dist} with mean zero, variance $1/d$, independent (but not identical) sub-Gaussian noise \cite{wang2024,chen2020,bayati2015}. We expect our results to hold under this change in noise.
        \item Previous works relax the bounded and Lipschitz requirement for 
        the activation/loss functions $f, g$ and
        $\Psi$ \cite{lovig2025,reeves2025, dandi2025}, although truncating a 
        desired function and considering a bounded Lipschitz extension can give a suitable analysis. 
    \end{enumerate}
\end{remark}

\subsection{Main Result: The State Evolution Of Decoupled Descent}

We begin by providing an asymptotically representation of the test error.
\begin{definition}\label{def:testErr}
    Given $m_{j,\theta} \in \RR^{L}$ for $j \in [J]$, $\Omega_\theta \in \RR^{L \times L}$ and $\bar a \in \RR^{L'}$, define 
    \[\testfunc(m_{1,\theta}, \dots, m_{J, \theta}, \Omega_{\theta}, \bar a) = \sum_{j=1}^J p_j \EE_{\substack{\check Z_\theta \sim \cN\left(0, \Omega_\theta \right)\\\check Y_j \sim \PP_j}}[\cL(\cM_{\bar a}(m_{j,\theta} + \check Z_\theta), \check Y_j)].\]
\end{definition}

It is easy to see that for trained parameters $\theta, a$ with almost sure limits $d^{-1} \mu_j^\top \theta \to m_{j, \theta}$, $d^{-1} \theta^\top \theta \to \Omega_\theta$, $a \to \bar a$, we have that $\lim_{d \to \infty} \EE_{\check x, \check y}[\cL(\cM_{\theta, a}(\check x), \check y)] = \testfunc(m_{1,\theta}, \dots, m_{J, \theta}, \Omega_\theta, \bar a)$ almost surely. This is proven in Appendix \ref{sec:asyTestError}.

We now present the main technical lemma of this work, the state evolution of decoupled descent. For simplicity, we defer the full low-dimensional system of recursive equations describing the state evolution parameters to Appendix \ref{sec:fullSe}.

\begin{lemma}\label{lem:ddState}
If Assumption \ref{as:main00} holds and $\phi: \RR^{L} \times \RR \times \RR^{L'} \to \RR, f, g$ are suitably regular (see Assumption \ref{as:phi}), then almost surely for $(h_t, a_t, \theta_t)_{t \in [T]}$ from~\eqref{eq:dd},
    \[\lim_{n \to \infty} \frac{1}{n} \sum_{i=1}^n \phi(h_{t,i}, y_i, a_t) =
    \sum_{j=1}^J p_{j} \EE[\phi(m_{j,t} + G^t, Y_j, \bar a_t)], \quad
    G^t \sim \cN(0, \Omega_t\ss{t,t}), \quad Y_j \sim \PP_j,\] where $\Omega_t, \Sigma_t, \Xi_t, m_{j,t}, \bar a_t$ are given by the following almost sure limits as $n,d \to \infty$ with $n/d \to \alpha$,
    \[\frac{1}{d}\mu_{j}^\top \theta_t \to m_{j,t},\quad \frac{1}{d} \theta_t^\top \theta_t \to \Omega_t\ss{t,t},\quad \frac{1}{d} \theta_t^\top \tilde \theta_t \to \Xi_t\ss{t,t},\quad \frac{1}{d} \tilde \theta_t^\top \tilde \theta_t \to \alpha  \Sigma_t\ss{t,t},\quad a_t \to \bar a_t.\] 
\end{lemma}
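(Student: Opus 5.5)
The plan is to recognize \eqref{eq:dd} as an instance of a general rectangular AMP iteration with matrix-valued (multi-dimensional) iterates and a non-separable memory, and then transfer a known state evolution theorem. First, decompose the data matrix as $X = Z + \frac{1}{d} S M^\top$. Here $Z$ has i.i.d.\ $\cN(0,1/d)$ entries, $M = (\mu_1,\dots,\mu_J) \in \RR^{d\times J}$, and $S \in \{0,1\}^{n\times J}$ is the one-hot mode-membership matrix, which is independent of $Z$. Conditioning on $(S, y)$, which are independent of $Z$, the labels and modes become row-wise ``side information'' that can enter the nonlinearities. Substituting into \eqref{eq:dd} gives
\[h_t = Z\theta_t + S\,\tfrac{1}{d}M^\top\theta_t + \eta_1\sum_{s<t}\eta_0^{t-1-s}\hat h_s.\]
The signal term $\frac1d M^\top \theta_t$ is a $J\times L$ matrix of overlaps, which is low dimensional. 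The memory sum $\eta_1\sum_s \eta_0^{t-1-s}\hat h_s$ is exactly what unrolling $\theta_t = \eta_0\theta_{t-1} - \eta_1\tilde\theta_{t-1}$ produces from the Onsager term of $Z^\top$ acting on $\hat h_s$. Similarly, $-\alpha(\frac1n\sum_i\nabla_h g)\theta_t$ is the Onsager correction for $Z^\top \hat h_t$ (since $n/d\to\alpha$). The signal part of $X^\top\hat h_t$, namely $\frac1d M S^\top \hat h_t$, contributes a term in the span of the $\mu_j$ with coefficients $\frac1d S^\top\hat h_t$, which are $J\times L$ empirical averages.

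The key steps, in order, are as follows.
\begin{enumerate}
\item Write the iteration in the standard form $u^t = Z\theta_t - \sum_s \hat h_s b_{t,s}$ and $v^{t+1} = Z^\top \hat h_t - \sum_s \theta_s c_{t,s}$. The state $\theta_t$ is a (linear) function of $(v^s)_{s\le t}$, $\theta_1$, and the columns of $M$. The state $\hat h_t$ is a row-wise function of $(u^s)_{s\le t}$, $S$, $y$, and the scalars $a_t$ and $\frac1d M^\top\theta_s$. Check that the paper's correction coefficients coincide with the Onsager coefficients of this general AMP, up to $o(1)$ errors from replacing empirical averages by their limits.
\item Invoke a general AMP state evolution result for rectangular Gaussian matrices with vector-valued iterates, side information, and Lipschitz non-separable or low-dimensional dependence, such as \cite{javanmard12} or \cite{feng2021}. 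The dependence on the random but concentrating quantities $a_t$, $\frac1d M^\top\theta_t$ and $\frac1n\sum_i \nabla_h g$ is handled by induction on $t$. Assume these converge almost surely to $\bar a_t$, $m_{j,t}$ and the deterministic Onsager coefficients. Then compare with the AMP that uses the deterministic limits, and control the difference by Lipschitzness of $f$, $g$ and $\nabla_h g$ together with $\|Z\|_{op}=O(1)$.
\item Read off the limits. The state evolution gives $u^t_i \approx$ Gaussian with covariance $\Omega_t[t,t]$, independent of $(S_i,y_i)$. Hence $h_{t,i} \approx m_{j,t} + G^t$ on mode $j$, which yields the claimed formula for $\frac1n\sum_i\phi$ via pseudo-Lipschitz test functions. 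Then $a_{t+1}\to\bar a_{t+1}$ follows by applying this formula with $\phi=f$. The overlaps $\frac1d\mu_j^\top\theta_{t+1}$, $\frac1d\theta^\top\tilde\theta$ and $\frac1d\tilde\theta^\top\tilde\theta$ follow from the state evolution on the $d$-side, together with the assumption $\frac1d\mu_j^\top\mu_k\to\chi_{j,k}$. This closes the induction.
\end{enumerate}

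I expect the main obstacle to be step 1 combined with the induction in step 2. The paper's correction uses the empirical $\frac1n\sum_i\nabla_h g(h_{t,i},\dots)$ evaluated on the actual iterates rather than on deterministic quantities. The memory term also carries a fixed geometric weighting $\eta_1\eta_0^{t-1-s}$, which must match the AMP Onsager coefficients exactly; this matching is what makes $\Omega_t[t,s]$ have a clean form. My first attempt would be to verify this by expanding $\theta_t = \eta_0^{t-1}\theta_1 - \eta_1\sum_s\eta_0^{t-1-s}\tilde\theta_s$, since $\partial\theta_t/\partial v^{s+1} = -\eta_1\eta_0^{t-1-s}$. The mean-part $\frac1d MS^\top\hat h_t$ also needs care: either absorb it as a rank-$J$ perturbation handled by conditioning on $M$, or project it off and track it as the deterministic overlaps $m_{j,t}$.
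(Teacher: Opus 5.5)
Your proposal is correct and follows essentially the same route as the paper. The paper likewise splits $X$ into $Z$ plus the rank-$J$ mean part, freezes the low-dimensional overlaps and $a_t$ at their limits so that the remainder is a full-history rectangular AMP with Onsager coefficients $-\eta_1\eta_0^{t-1-s}$ and $\alpha\cdot\frac{1}{n}\sum_i\nabla_h g$, applies matrix-valued AMP state evolution, and removes the freezing by a Lipschitz plus $\|Z\|_{\mathrm{op}}=O(1)$ comparison before identifying $m_{j,t}$, $l_{j,t}$, $\bar a_t$ recursively. Your interleaved induction is actually a cleaner organization than the paper's, which assumes the limits exist and identifies them afterward.

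One caveat for your step 3: the $d$-side computation gives
$\frac{1}{d}\tilde\theta_t^\top\tilde\theta_t\to\alpha\Sigma_t[t,t]+\alpha^2\sum_{j,k}\chi_{j,k}l_{j,t}l_{k,t}^\top$,
not $\alpha\Sigma_t[t,t]$. The extra term comes from the signal part $\alpha\sum_j\mu_j l_{j,t}^\top$ of $\tilde\theta_t$, and it is exactly the combination that enters the $\Omega$ and $\Xi$ recursions. So the last displayed limit in the statement is literally correct only when that term vanishes.
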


\begin{remark}[Finite Sample Guarantees]\label{rm:finite}
    Again, basing our descent algorithm in approximate message passing has the
    benefits of appealing to prior AMP literature, specifically one can endow the
    above lemma with finite sample guarantees using works \cite{rush2018,li2023,reeves2025,bao2025}.
\end{remark}

The proof of Lemma \ref{lem:ddState} follows by a change of variables to traditional AMP algorithms and is deferred to Appendix \ref{sec:AMP}.
As a consequence, we have
the following theorem that confirms DD satisfies the train-test identity.

\begin{theorem}\label{thm:ddLoss} Consider the state evolution variables in
    Definition~\ref{lem:ddState} and $\testfunc$ from Definition \ref{def:testErr}. If Assumption \ref{as:main00} holds and $g,f$ are suitably regular (Assumption \ref{as:phi}), the following holds almost surely
    for the iterates $(h_t, a_t, \theta_t)_{t \in [T]}$ from \eqref{eq:dd} and $(\check x, \check y)$ are drawn from distribution \eqref{eq:dist},
    \[\lim_{n \to \infty} \frac{1}{n} \sum_{i=1}^n \cL(\cM_{a_t}(h_{t,i}), y_i) = \testfunc(m_{1,t}, \dots, m_{J,t}, \Sigma_t\ss{t,t}, \bar a_t),\] 
    as a consequence, the train-test identity holds for DD, i.e. almost surely,
    \[\lim_{n,d \to \infty} \frac{1}{n} \sum_{i=1}^n \cL(\cM_{a_t}(h_{t,i}), y_i) - \lim_{d
    \to \infty}\EE_{\check x, \check y}[\cL(\cM_{a_t}(\check x^\top \theta), \check y)] = 0.\]
\end{theorem}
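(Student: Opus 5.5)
The plan is to read both sides of the train-test identity off the two asymptotic representations already available: Lemma \ref{lem:ddState} for the train error and the test-error limit stated after Definition \ref{def:testErr} (proved in Appendix \ref{sec:asyTestError}). We then check that both produce the same Gaussian functional $\testfunc$. Concretely, set
\[\phi(h, y, a) := \Psi(h, y, a) = \cL(\cM_a(h), y),\]
so the training error at time $t$ is exactly $\frac{1}{n}\sum_{i}\phi(h_{t,i}, y_i, a_t)$.

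\emph{Step 1 (train side).} Apply Lemma \ref{lem:ddState} with this $\phi$. This requires checking that $\Psi$ meets the regularity in Assumption \ref{as:phi}. Assumption \ref{as:main00}(4) gives that $\Psi$ is $(C^4, C^2)\cap{\rm Lip}$ in $(h,a)$ with bounded expected derivatives under \eqref{eq:dist}. Since $a_t$ enters $\phi$ as an argument, we also need local Lipschitz continuity of $\Psi$ in $a$; this is what lets the lemma replace $a_t$ by its limit $\bar a_t$. The lemma then yields, almost surely,
\[\lim_{n\to\infty}\frac{1}{n}\sum_{i=1}^n \cL(\cM_{a_t}(h_{t,i}), y_i) = \sum_{j=1}^J p_j\,\EE\big[\cL(\cM_{\bar a_t}(m_{j,t}+G^t), Y_j)\big],\]
where $G^t\sim\cN(0,\Omega_t\ss{t,t})$ and $Y_j \sim \PP_j$. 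By Definition \ref{def:testErr}, the right-hand side is $\testfunc(m_{1,t},\dots,m_{J,t},\Omega_t\ss{t,t},\bar a_t)$. Here the covariance slot must be identified with the limit of $\frac1d\theta_t^\top\theta_t$; I would verify in the full state evolution (Appendix \ref{sec:fullSe}) that the covariance written $\Sigma_t\ss{t,t}$ in the statement coincides with this quantity, or else read the statement with that identification.

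\emph{Step 2 (test side).} For fixed $t$, $\theta_t$ and $a_t$ are measurable functions of $(X,y)$ and are independent of the fresh pair $(\check x,\check y)$. Conditionally on the data, mode $j$ gives $\check x^\top\theta_t \sim \cN(\theta_t^\top\mu_j/d,\ \theta_t^\top\theta_t/d)$. Lemma \ref{lem:ddState} supplies the almost sure limits $\frac1d\mu_j^\top\theta_t\to m_{j,t}$, $\frac1d\theta_t^\top\theta_t\to\Omega_t\ss{t,t}$ and $a_t\to\bar a_t$. These are exactly the hypotheses of the test-error limit after Definition \ref{def:testErr}, so that result gives
\[\lim_{d\to\infty}\EE_{\check x,\check y}\big[\cL(\cM_{a_t}(\check x^\top\theta_t),\check y)\big] = \testfunc(m_{1,t},\dots,m_{J,t},\Omega_t\ss{t,t},\bar a_t)\]
almost surely. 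Subtracting this from the limit in Step 1 gives the train-test identity. A finite intersection over $t\in[T]$ of the almost sure events preserves almost sure convergence.

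\emph{Main obstacle.} The substantive content sits inside Lemma \ref{lem:ddState}. At this level the risk is purely a regularity bookkeeping issue: $\Psi$ may be Lipschitz but unbounded in $h$, whereas the $\phi$ allowed in Assumption \ref{as:phi} may need to be bounded or pseudo-Lipschitz of order $2$. My first attempt is to note that a Lipschitz $\phi$ is pseudo-Lipschitz of order $1$, which standard AMP state evolution covers. If boundedness is truly required, I would fall back on truncation: apply the lemma to $\min(\max(\Psi,-K),K)$, then control the remainder uniformly in $n$. That control uses the Gaussian tail of $m_{j,t}+G^t$ together with the bound $\frac1n\|h_t\|_F^2=O(1)$, which itself follows from the lemma applied to $\phi=\|h\|^2$ truncated. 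Finally, let $K\to\infty$ on both the train and test sides, where dominated convergence under the Gaussian law applies.
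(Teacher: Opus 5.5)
Your proposal is correct and matches the paper's proof: set $\phi=\Psi$ and invoke Lemma~\ref{lem:ddState} to obtain the train limit $\testfunc(m_{1,t},\dots,m_{J,t},\Omega_t[t,t],\bar a_t)$, then apply Proposition~\ref{prop:testErr}, using the limits of $\frac1d\mu_j^\top\theta_t$, $\frac1d\theta_t^\top\theta_t$ and $a_t$ that the same lemma supplies. You are right that the covariance slot should be $\Omega_t[t,t]$ (the paper's own proof produces $\Omega_t[t,t]$, so $\Sigma_t[t,t]$ in the statement is a typo), and the truncation fallback is unnecessary because Assumption~\ref{as:phi} asks only that $\phi$ be Lipschitz in $(h,a)$ uniformly in $y$, which the Lipschitz condition on $\Psi$ in Assumption~\ref{as:main0} provides directly.
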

The proof of this statement is deferred to Appendix \ref{sec:proofThm}. 
We sketch the proof of Theorem \ref{thm:ddLoss} below. 

\textbf{Proof Sketch}: We rewrite DD, via a change of variables, as a non-separable matrix-valued AMP algorithm \cite{berthier2017, lovig2025}. This establishes that the coordinate-wise average of $\phi$ applied to $h_t$ is governed by a low-dimensional set of recursive equations (Appendix \ref{sec:fullSe}).
Choosing $\phi = \Psi$, this average (with respect to iterate $\theta_t$) only depends on the almost sure limits of $d^{-1} \mu_j^\top \theta_t$ and $d^{-1} \theta_t^\top \theta_t$. These moments evolve identically as if we trained on the data $X^{(t)} = S + Z^{(t)}$ at time $t$ where $Z^{(t)}$ is a fresh Gaussian matrix and $S$ has rows independently assigned $\mu_j/d$ with probability $p_j$.
Due to this behavior, the training error must be the test error since DD essentially evaluates on a fresh independent data set at each update.

\begin{remark}
    A dynamical mean field theory (DMFT) analysis of overfitting in full-batch 
    GD is given in \cite[Equations (C.7)-(C.10)]{montanari2025}. 
    This description contains complex ``response function'' terms represented
    by the integration of a two-time correlation matrix which accounts for the 
    reuse of the data matrix. By implementing DD with
    AMP, however, our algorithm is self correcting in the sense that the
    response function is zero, admitting a simpler analysis than DMFT methods.
    This simplicity also allows for better physical interpretation and insight 
    into algorithmic design.
\end{remark}

\section{Algorithm Design}

The choice of $g,f$ in algorithm \eqref{eq:dd} controls the state evolution variables, and in turn, the asymptotic test error $\testfunc_t = \testfunc(m_{1,t}, \dots, m_{J,t}, \Omega_T\ss{t,t}, \bar a_t)$ by Theorem \ref{thm:ddLoss}. This means that given $g,f$ we can calculate $\testfunc_{t}$ exactly by simulations. Thus, given candidate function classes $\cG$ and $\cF$, with a budgeted run-time $T$, the optimal $g^*$, $f^*$ is found by running $|\cF| |\cG|$ total simulations and selecting the run that minimized $\min_{t \in [T]} \testfunc_t$.
Unfortunately, to run these simulations, we require exact knowledge of the data generating processes \eqref{eq:dist}. Therefore, it is desirable to select $g,f$ which produce low test error independent of the data generating process.

\subsection{Pure Decoupled Descent}

Using \eqref{eq:dd}, we recover pure DD from Section \ref{sec:trainTest} by selecting $g = \nabla_h \Psi$, $f = \nabla_{a}\Psi$, $\eta_{0} = \gamma_{0} = 1$ and $\eta_{1} = \gamma_1 = \eta$. See \eqref{eq:pureDd} for the exact iteration and Appendix \ref{sec:pureDdSe} for the corresponding state evolution.

Traditionally in the analysis of GD, we consider a Taylor expansion on the train error when the learning rate $\eta \to 0$.
Because pure DD satisfies the train-test identity, in contrast to GD, we can directly expand the asymptotic test error.
With $G^t$ defined by the state evolution in Appendix \ref{sec:pureDdSe} and $Y_j \sim \PP_j$ from \eqref{eq:dist}, let
\[
    \bG_t = \Big(\EE[\nabla_{a} \Psi(m_{j,t} + G^t, Y_j, \bar a_t)]\Big)_{j \in [J]}, \qquad \bU_t = \Big(\EE[\nabla_{h}\Psi(m_{j,t} + G^t,Y_j, \bar a_t)]\Big)_{j \in [J]},
\]
and denote $\bchi = (\chi_{j,k})_{j,k \in [J]}$ and $\bp = (p_{j})_{j \in [J]}$ such that $(\diag(\bp) \bchi \diag(\bp))_{j,k} = p_{j} \chi_{j,k} p_{k}$. 

\begin{theorem}\label{thm:pureTaylor}
    Let pure DD satisfy the conditions of Assumption \ref{as:main00} and Assumption \ref{as:phi} for sufficiently small $\eta$ up to bounded time $T \in \NN$. As $\eta \to 0$, $\testfunc_{t+1}$ satisfies the following Taylor expansion with state evolution parameters corresponding to the state evolution recursion \eqref{eq:pureDdState},
    \begin{align}
    \testfunc_{t+1} &= \testfunc_t - \eta \|\bG_t \bp\|_2^2 - \eta \alpha \|\bU_t ({\rm diag}(\bp) \bchi {\rm diag}(\bp))^{1/2}\|_{\Fro}^2 \label{eq:TaylorSignal}\\ 
    &\quad - \frac{\eta}{2} \sum_{j=1}^J p_j
    \Big\langle \EE[\nabla_h^2 \Psi(m_{j,t} + G^t, Y_j, \bar a_t)],
    \Xi_t\ss{t,t} + \Xi_t\ss{t,t}^\top \Big\rangle + \epsilon_{t},\label{eq:TaylorError}
    \end{align}
    where $\sup_{t \in [T]}|\epsilon_t| \leq C\eta^2$ as $\eta \to 0$ (where $C$ depends on constants in Assumption \ref{as:phi}).
\end{theorem}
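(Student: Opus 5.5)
The plan is to work entirely at the level of the state evolution: by Theorem~\ref{thm:ddLoss} and Definition~\ref{def:testErr}, $\testfunc_t=\testfunc(m_{1,t},\dots,m_{J,t},\Omega_t\ss{t,t},\bar a_t)$ is a deterministic function of the low-dimensional state. So it suffices to (i) write the one-step update of $(m_{j,t},\Omega_t\ss{t,t},\bar a_t)$ under pure DD to first order in $\eta$, and (ii) Taylor expand $\testfunc$ in these arguments.

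\emph{Step 1 (one-step state evolution).} From \eqref{eq:pureDd}, $\theta_{t+1}=\theta_t-\eta\tilde\theta_t$ and $a_{t+1}=a_t-\frac{\eta}{n}\sum_i\nabla_a\Psi(h_{t,i},y_i,a_t)$. Applying Lemma~\ref{lem:ddState} with $\phi=\nabla_a\Psi$ gives $\bar a_{t+1}=\bar a_t-\eta\bG_t\bp$.

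For the overlaps, I will write $X=S+Z$, where $S$ has rows $\mu_{k(i)}/d$. Then
\[
\tfrac1d\mu_j^\top X^\top\hat h_t=\tfrac nd\cdot\tfrac1n\sum_i \tfrac{\mu_j^\top\mu_{k(i)}}{d}\,\hat h_{t,i}+\tfrac1d\mu_j^\top Z^\top\hat h_t .
\]
The first piece converges, via Lemma~\ref{lem:ddState} with $\phi=\nabla_h\Psi$, to $\alpha\sum_k\chi_{j,k}p_k\bU_{t,k}$. The second piece, together with the Onsager term $\alpha(\frac1n\sum_i\nabla_h^2\Psi)\theta_t$ projected on $\mu_j$, should vanish in the limit. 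This is exactly the decoupling property: in the AMP change of variables of Appendix~\ref{sec:AMP}, the Gaussian part of $\tilde\theta_t$ is asymptotically independent of the fixed signal directions. I will read this off from the full recursion \eqref{eq:pureDdState} rather than reprove it. The result is
\[
m_{j,t+1}=m_{j,t}-\eta\alpha\sum_k\chi_{j,k}p_k\bU_{t,k}.
\]
For the covariance, expanding $\frac1d\theta_{t+1}^\top\theta_{t+1}$ and using the limits in Lemma~\ref{lem:ddState} gives
\[
\Omega_{t+1}\ss{t+1,t+1}=\Omega_t\ss{t,t}-\eta(\Xi_t\ss{t,t}+\Xi_t\ss{t,t}^\top)+\eta^2\alpha\Sigma_t\ss{t,t}.
\]

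\emph{Step 2 (derivatives of $\testfunc$).} Differentiating under the expectation, I will compute
\[
\partial_{m_j}\testfunc=p_j\bU_{t,j},\qquad \partial_{\bar a}\testfunc=\bG_t\bp .
\]
For the covariance argument I will use Price's theorem (Gaussian integration by parts twice), which gives
\[
\partial_{\Omega}\EE[F(m+Z_\Omega)]=\tfrac12\EE[\nabla^2F(m+Z_\Omega)].
\]
This produces $\tfrac12\sum_jp_j\EE[\nabla_h^2\Psi]$. Pairing these gradients with the increments from Step 1 yields the three first-order terms:
\[
-\eta\|\bG_t\bp\|_2^2,\qquad -\eta\alpha\sum_{j,k}p_j\bU_{t,j}^\top\chi_{j,k}p_k\bU_{t,k}=-\eta\alpha\|\bU_t(\diag(\bp)\bchi\diag(\bp))^{1/2}\|_\Fro^2,
\]
and the Hessian term in \eqref{eq:TaylorError}. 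The quadratic form is a Frobenius norm because $\diag(\bp)\bchi\diag(\bp)$ is positive semidefinite, $\bchi$ being a Gram limit.

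\emph{Step 3 (remainder).} The error $\epsilon_t$ collects two contributions: the $\eta^2\alpha\Sigma$ term in the $\Omega$ update, and the second-order Taylor remainder of $\testfunc$, which is quadratic in increments of size $O(\eta)$. Bounding the Hessian of $\testfunc$ in $(m,\Omega,\bar a)$ requires up to fourth $h$-derivatives of $\Psi$ (Price's theorem applied twice in $\Omega$) and second $a$-derivatives. This is exactly the $(C^4,C^2)$ regularity with bounded expected derivatives in Assumption~\ref{as:main00}.

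The main obstacle is uniformity: I need $\Omega_t,\Xi_t,\Sigma_t,m_{j,t},\bar a_t$ to stay in a compact set for all $t\le T$ and all small $\eta$, with $\Omega_t\ss{t,t}$ kept away from degeneracy wherever Price's formula is used. I will try to get this by induction on $t$: the increments are $O(\eta)$ with constants controlled by the bounded-derivative assumptions (Assumption~\ref{as:phi}), so over $T$ steps the state stays within $O(T\eta)$ of its initialization. A Grönwall-type bound then gives a single constant $C$ with $\sup_{t\in[T]}|\epsilon_t|\le C\eta^2$.
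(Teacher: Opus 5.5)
Your proposal is correct and follows essentially the paper's route. The paper first proves the general expansion of Theorem~\ref{thm:ddLearn}: the state-evolution increments are $O(\epsilon)$, the first-order terms come from Lemma~\ref{lem:orderStein}, and the second-order remainder is controlled by Lemma~\ref{lem:bigHessBound} using the fourth $h$- and second $a$-derivatives; it then specializes to $\eta_0=\gamma_0=1$, $\eta_1=\gamma_1=\eta$, $g=\nabla_h\Psi$, $f=\nabla_a\Psi$, which is exactly your direct computation. One harmless slip: by \eqref{eq:pureDdState} the $\eta^2$ term in the $\Omega$ update is $\eta^2 V_{t,t}$, which also contains $\alpha^2\sum_{j,k}\chi_{j,k}p_jp_k\EE[\nabla_h\Psi_{t,j}]\EE[\nabla_h\Psi_{t,k}]^\top$. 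This term is bounded and is absorbed into $\epsilon_t$ in the same way, and the uniform bounds you plan to obtain by induction are supplied directly by Assumption~\ref{as:phi} together with the boundedness of $g,f$.
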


Notice the terms in \eqref{eq:TaylorSignal} are non-positive, meaning that they directly lead to a decrease in the test error. The remaining term in \eqref{eq:TaylorError}, however, may be positive. We describe methods to bound the impact of this term in Remark \ref{rm:TaylorError}. 
The proof of Theorem \ref{thm:pureTaylor} is deferred to Appendix \ref{sec:design} alongside a general Taylor expansion for generic choices of $g,f$ and learning rate parameters. To give insight on the global convergence properties of DD, we describe fixed points of DD algorithms and compare them to critical points of the test error in Appendix \ref{sec:fixedPoints}.

\section{Applications}\label{sec:experiments}

Below we present three applications of decoupled descent, a fourth application on the signal-less regression problem from the introduction is deferred to Appendix \ref{sec:signalless}. Experiments were conducted on a M4 Mac mini (2024, 10-core CPU, 10-core GPU) with 16GB of unified memory.

\subsection{Improved Training On The XOR Model}\label{sec:XOR}

We consider a high-dimensional variant of the XOR problem ($J = 4$, $p_j = 1/4$) with $n$ \iid data points with the following signal vectors dependent on fixed $v\in \RR^{d/2}$, $\mu_{1} = [v, v], \mu_{2} = [-v, -v], \mu_{3} = [-v, v], \mu_{4} = [v, -v]$, 
where $y_j = 0$ for $j \in \{1,2\}$ and $y_j = 1$ for $j \in \{3,4\}$.

We fit model $\cM_{a, \theta^1, \theta^2}(x) = \sigma(a (x^\top \theta^1) (x^\top \theta^2))$, where $\theta^1 \in
\RR^{d}$, $\theta^2 \in \RR^{d}$, $a \in \RR$ and $\sigma(x)  = \frac{1}{1 +
e^{-x}}$, under cross entropy loss (i.e. $\cL(\hat y,y) = -y \log(\hat y) - (1-y) \log(1-\hat y)$).
These parameters are initialized at $\theta_1^1 \sim \cN(0, \Id_d)$, $\theta_1^2 \sim \cN(0, \Id_d)$ and $a_1 \sim \cN(0,1)$ independently. Both GD and DD iterations for this problem are given in Appendix \ref{sec:detailXor}.

Figure \ref{fig:XorClassification} plots the train and test errors for GD and DD when each $v_i = \lambda$ with $\lambda \in \{1,4,8\}$ as a signal to noise ratio (SNR) parameter. In all cases, DD maintains the train-test identity. In low SNR regimes $(\lambda \in\{1,4\})$, DD outperforms GD and maintains the train-test identity while GD rapidly overfits; for high SNR regimes, DD matches GD's performance because, under the $\lambda \to \infty$ limit, the signal dominates the noise and the effect of memorization is negligible.

\begin{figure}
    \centering
    \includegraphics[width = \imageSize\linewidth]{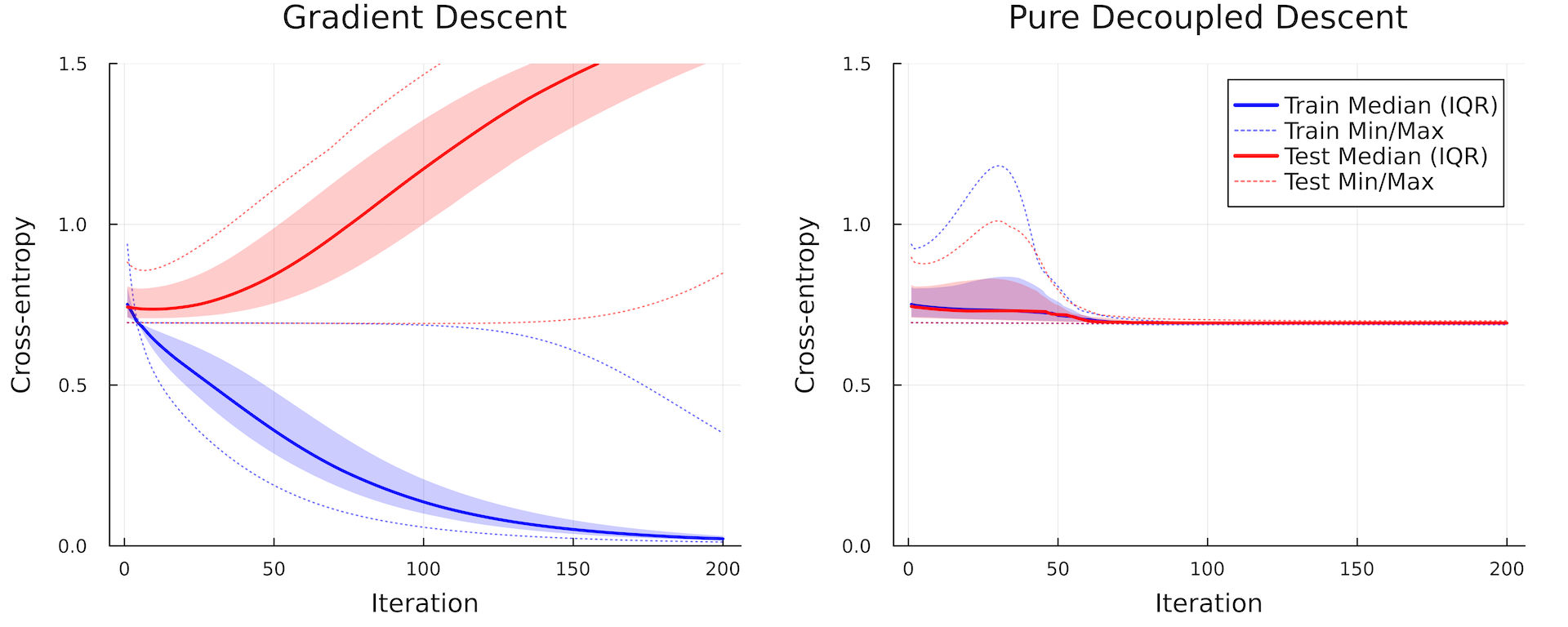}
    \includegraphics[width = \imageSize\linewidth]{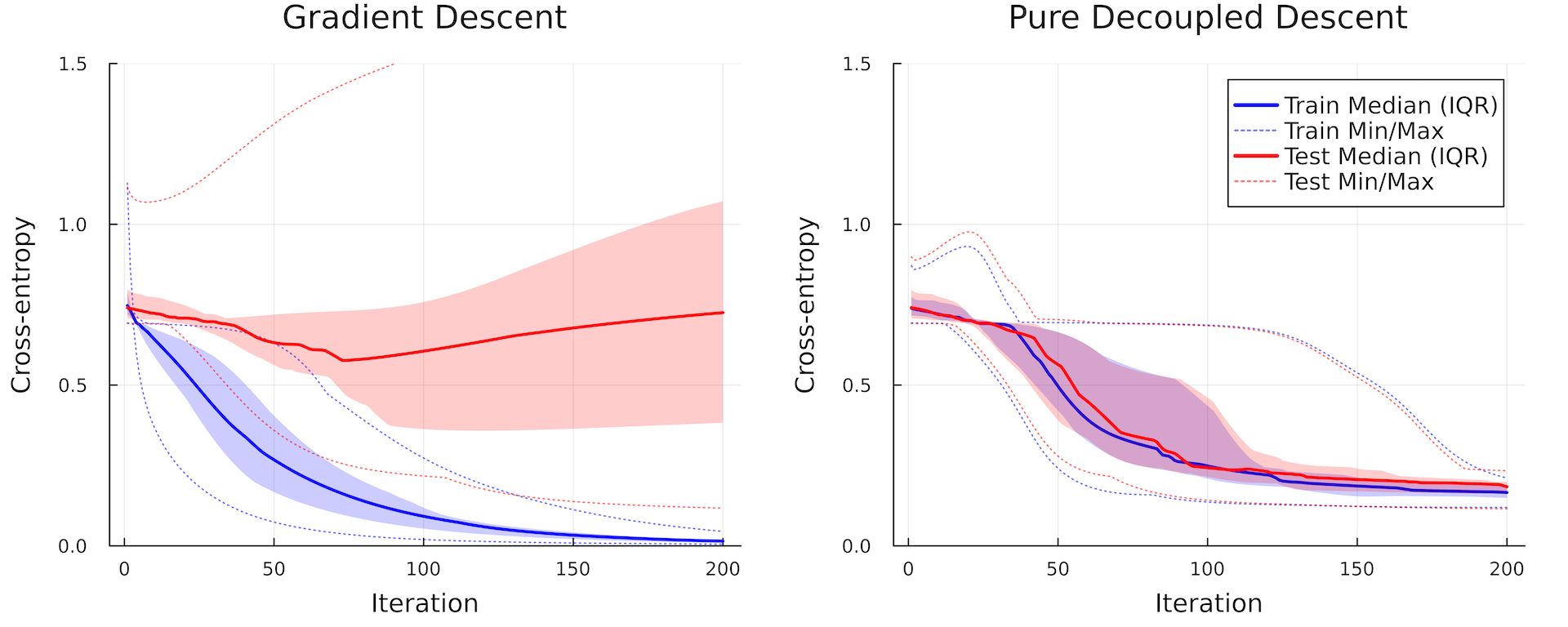}
    
    \includegraphics[width = \imageSize\linewidth]{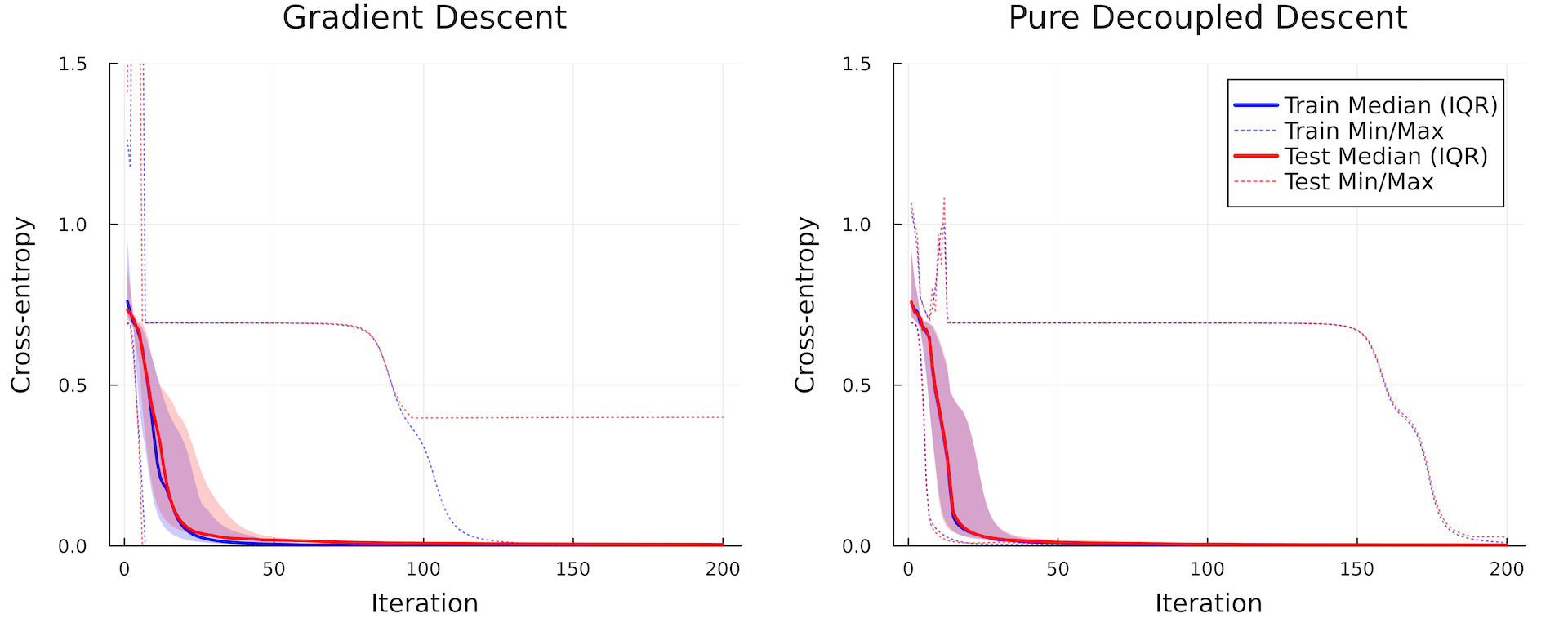}
    \caption{Summary statistics for 20 XOR runs ($n=d=1000$): GD (left) vs. DD (right) with $\eta=0.05$ and SNR $\lambda = 1$ (upper left), $\lambda = 4$ (upper right) and $\lambda = 8$ (bottom). Blue/red denote train/test error; solid lines are medians, shaded areas are interquartile ranges, and dotted lines show min/max. \textbf{Low SNR ($\lambda=1$):} GD overfits (low train/high test error); DD stabilizes both near $\log(2)$, reflecting the non-informative regime. \textbf{Medium SNR ($\lambda=4$):} DD maintains train-test parity, outperforming GD's overfit solution. \textbf{High SNR ($\lambda=8$):} Signal dominates noise; both algorithms achieve similar performance.}
    \label{fig:XorClassification}
\end{figure}

We also provide an example of zero-cost validation hyperparameter tuning from Section \ref{sec:trainTest}. We fix the value of $a_t = 1$ in $\cM_a$ and train $\theta^1$ and $\theta^2$ with a damped variant of pure DD given in Appendix \ref{sec:detailXor}; this training model incorporates varying $\eta_0$ and represents weight regularization for the parameters. In Figure \ref{fig:etaZeroSweep}, we run 20 shared replications of the XOR data and plot the train-test error for each choice of $\eta_0 \in \{1,0.9,0.8\}$ (recalling $\eta_0 = 1$ is pure DD). We can see that our zero-cost validation method is successful, each run demonstrates the train-test identity, and we observe that some weight regularization is beneficial in this case.

\begin{figure}[h]
    \centering
    \begin{minipage}{\imageSizeTwo\linewidth}
        \centering
        \textbf{\boldmath $\eta_0 = 1.0$}\\[1ex]
        \includegraphics[width=\linewidth]{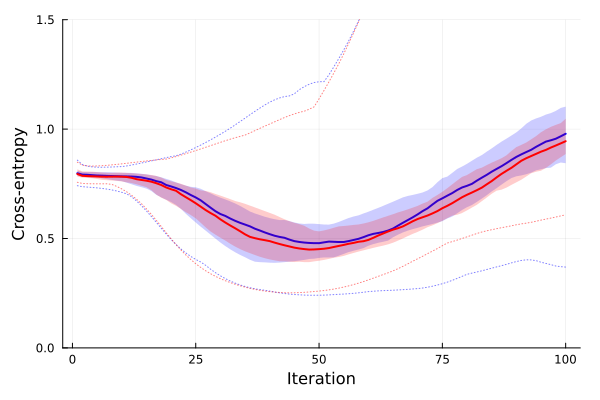}
    \end{minipage}
    \begin{minipage}{\imageSizeTwo\linewidth}
        \centering
        \textbf{\boldmath $\eta_0 = 0.9$}\\[1ex]
        \includegraphics[width=\linewidth]{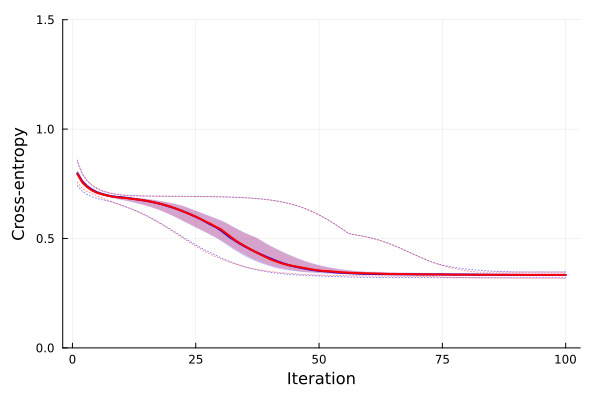}
    \end{minipage}
    \begin{minipage}{\imageSizeTwo\linewidth}
        \centering
        \textbf{\boldmath $\eta_0 = 0.8$}\\[1ex]
        \includegraphics[width=\linewidth]{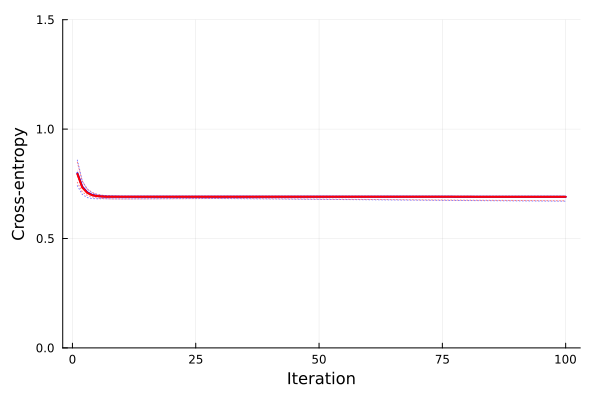}
    \end{minipage}
    \caption{Summary statistics for 50 XOR runs ($n=d=1000$) of damped DD (defined in Appendix \ref{sec:detailXor}): $\eta=0.05$, fixed $a_t = 1$, $\lambda = 4$ and $\eta_0 = 1$ (left), $\eta_0 = 0.9$ (middle) and $\eta_0 = 0.8$ (right). Blue/red colors and line type are equivalent to Figure \ref{fig:XorClassification}. \textbf{No Regularization}: $\eta_0 = 1$ (pure DD) gives an initial decrease in the loss but suffers from inflated values of $\Omega_t\ss{t,t}$ for later iterations (note, the early stopping technique of Section \ref{sec:trainTest} would be helpful in this case). \textbf{Mild Regularization}: $\eta_0 = 0.9$ finds a good balance in weight regularization which controls variance inflation while achieving good test error and training stability. \textbf{High Regularization}: $\eta_0 = 0.8$ regularizes too strongly and leads to an estimator that is too conservative, leading to a larger train-test error.}
    \label{fig:etaZeroSweep}
\end{figure}

\subsection{Varying Layer Widths For Classifying Zeros And Eights For MNIST Data}\label{sec:MNIST}

We apply DD to a simple MNIST binary classification test (i.e. zeros vs. eights). For the train data, we sample $n=800$ digits (grayscale images of dimension $d=784$) from all zeros and eights in the MNIST data set, rescale by SNR $\lambda/d$, and inject mean zero, variance $1/d$, discrete noise,
\[\delta_{-\sqrt{2}/\sqrt{d}} \text{ wp } 1/4, \quad \delta_{\sqrt{2}/\sqrt{d}} \text{ wp } 1/4,\quad \delta_0 \text{ wp }1/2,\label{eq:discNoise}\]
(instead of the standard Gaussian noise from distribution \eqref{eq:dist}). The test set is all zero and eight digits in MNIST with identical processing; this experiment verifies if the train-test identity of decoupled descent demonstrates universality phenomena similar to AMP algorithms and is robust when the data is not immediately represented by $J$ mixture modes.

Additionally, to see if the train-test identity for decoupled descent degrades as $L$ increases in size, we train a two-layer network (hidden widths $L \in \{3, 9, 27\}$, $\tanh$ activation, sigmoid output) using cross entropy loss and learning rate $\eta = 1$. We directly implemented this training procedure in \texttt{Pytorch} using a special MLP class (see Appendix \ref{sec:implementation} for details) and thus utilized the default \texttt{PyTorch} initializations for our parameters. Figure \ref{fig:lossGDvDD} plots the train-test error for these three models, we see the train-test identity still holds regardless of the size of the hidden layer, the discrete (instead of Gaussian) noise and a possibly more complex signal distribution.

\begin{figure}[h]
    \centering
    \includegraphics[width=\imageSize\linewidth]{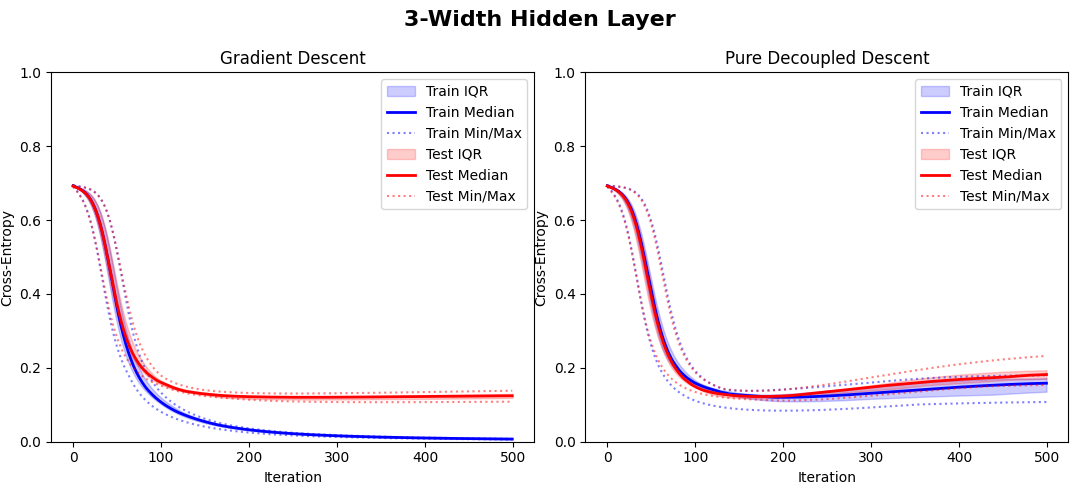}
    \includegraphics[width=\imageSize\linewidth]{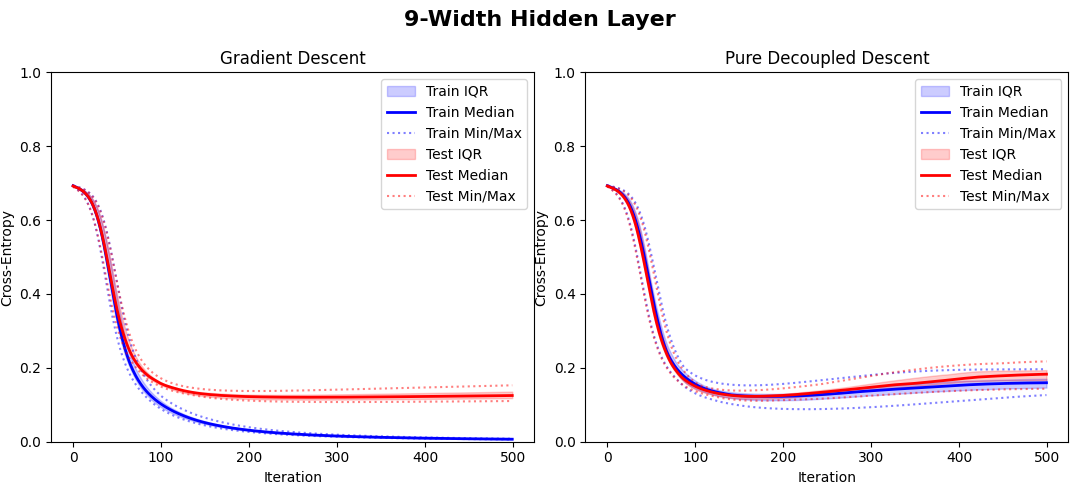}
    
    \includegraphics[width=\imageSize\linewidth]{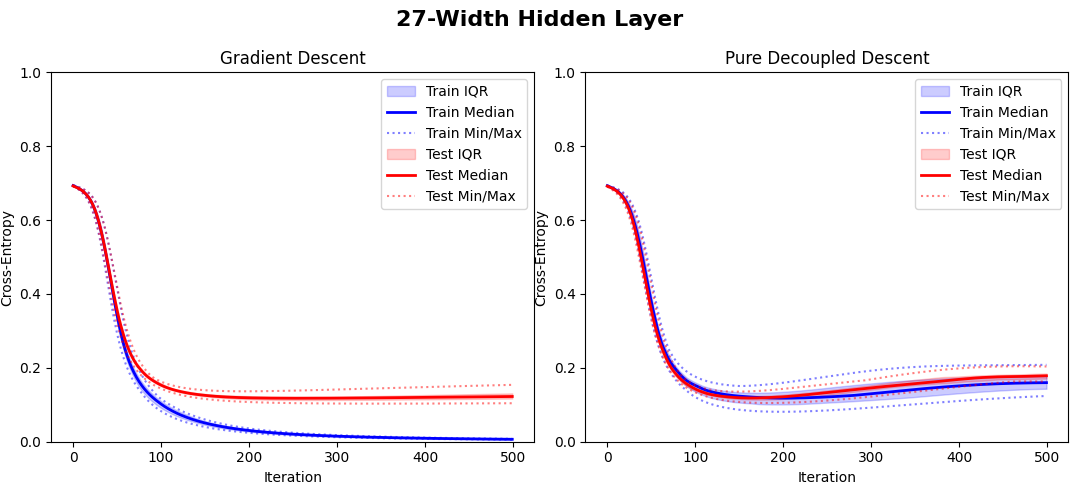}
    \caption{MNIST zeros vs. eights train/test errors ($d=784, n=800, \lambda=30$) over 20 replications: GD (left) vs. DD (right) with discrete noise from distribution \eqref{eq:discNoise}. Blue/red denote train/test error; solid lines are medians, shaded areas are IQRs, dotted lines are min/max. We run a two layer network with hidden layers ($L \in \{3, 9, 27\}$). The train-test error identity continues to hold as $L$ grows and when the Gaussian noise is replaced with this discrete counterpart.}
    \label{fig:lossGDvDD}
\end{figure}

\begin{remark}[On computational complexity]
 Recall implementing pure DD required two correction terms. Term $\eta \sum_{s=1}^{t-1} \hat h_{s}$ adds $O(n)$ storage but no computational overhead, trivial compared to the size of the data. The Hessian term $\eta \alpha \left( \frac{1}{n}\sum_{i=1}^n \nabla_h^2\Psi(h_{t,i}, y_i, a_t)\right)$ requires a per-iteration Jacobian. 
 This Hessian overhead is a negligible, $d$ independent constant---the necessary mechanism for DD to maintain the train-test identity. It is also worth noting that using standard auto-differentiation allows the process of deriving the Hessian automatically, avoiding the process of deriving the correction term on a case by case basis. For completeness, we provide some comparative run-times between GD and pure DD for the $L = 9$ case in Appendix \ref{sec:mnistTime}.
\end{remark}

\subsection{Training A MLP Head On CIFAR-10 ResNet Embeddings For Classifying Cats And Dogs}\label{sec:CIFAR}

We finally consider a modern CIFAR-10
classification task. The goal of this task is to classify images as either
pictures of cats (CIFAR-10 class 3) or dogs (CIFAR-10 class 5). We note that 
the DD algorithms in this section were also natively implemented using \texttt{Pytorch}'s
\texttt{torch.autograd} function (See Appendix \ref{sec:implementation} for the MNIST example, the CIFAR-10 example is similar).

The data matrix for this application is wildly different from Assumption \ref{as:main0}.
Instead of noisy images, we use pre-trained ResNet-18 embeddings of dimension $d = 512$.
We sample $n = 800$ random embedded vectors from the full pool of embedded 
cat and dog vectors from CIFAR-10 for both the training and testing set independently.
Then, we train a two layer MLP classification head with hidden layer width five 
and $\tanh$ activation using both GD and pure DD.

The motivation for using DD on these vectors is from Gaussian
equivalence theory \cite{goldt2021} which postulates one can analyze the
intermediate layers of a trained network with Gaussian surrogates. We consider increasingly stronger ``whitening'' over our training and testing data. In all cases, we standardize the embedding vectors by subtracting their
coordinate-wise mean and dividing by the coordinate-wise standard deviation
(with respect to only the training distribution). A detailed explanation of the whitening procedures are deferred to Appendix \ref{sec:embedding}, informally we have (1) \textit{Vanilla}, rescaling features by $1/\sqrt{d}$; (2) \textit{ZCA (Train)}, applying whitening computed solely on training data \cite{kessy2018}; and (3) \textit{Joint ZCA}, whitening across the combined train-test pool to prevent covariance drift. These procedures should be interpreted as 
increasingly stronger processing steps to make the embedded vectors of the train and test 
set seem more Gaussian.

We can see the training and test curves for both GD and pure
DD in Figure \ref{fig:cifar}. Although DD has reduced the generalization gap, it is larger than previous examples as {\bf we have not injected random noise
in the data set} and therefore are unlikely to satisfy Assumption \ref{as:main00} completely. This suggests a more general method to apply DD to models with non-independent noise structures.

\begin{figure}
    \centering
    \includegraphics[width=\imageSize\linewidth]{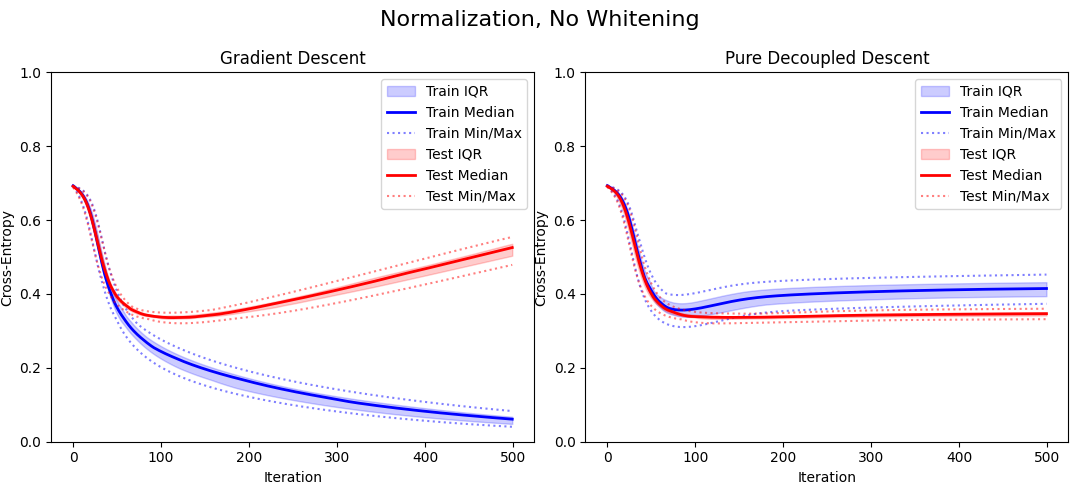}
    \includegraphics[width=\imageSize\linewidth]{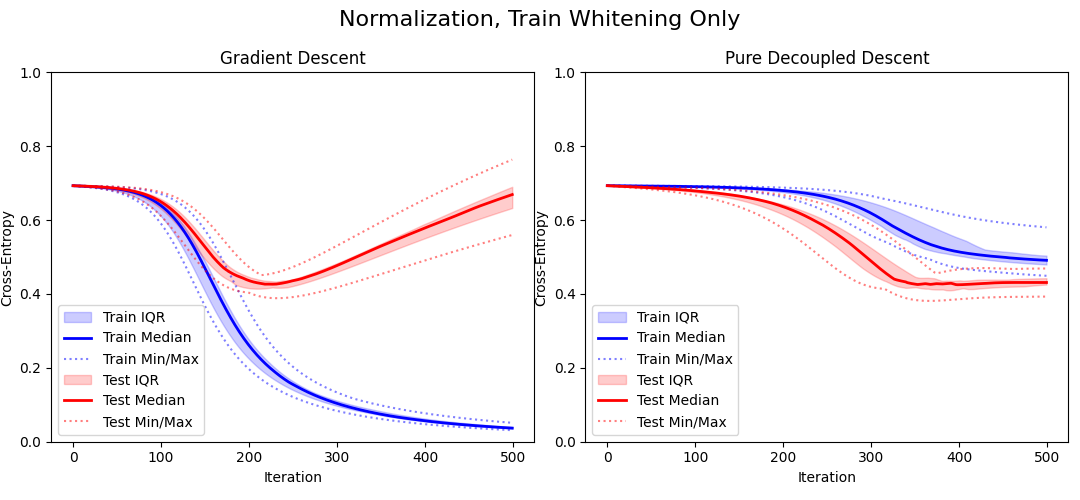}
    
    \includegraphics[width=\imageSize\linewidth]{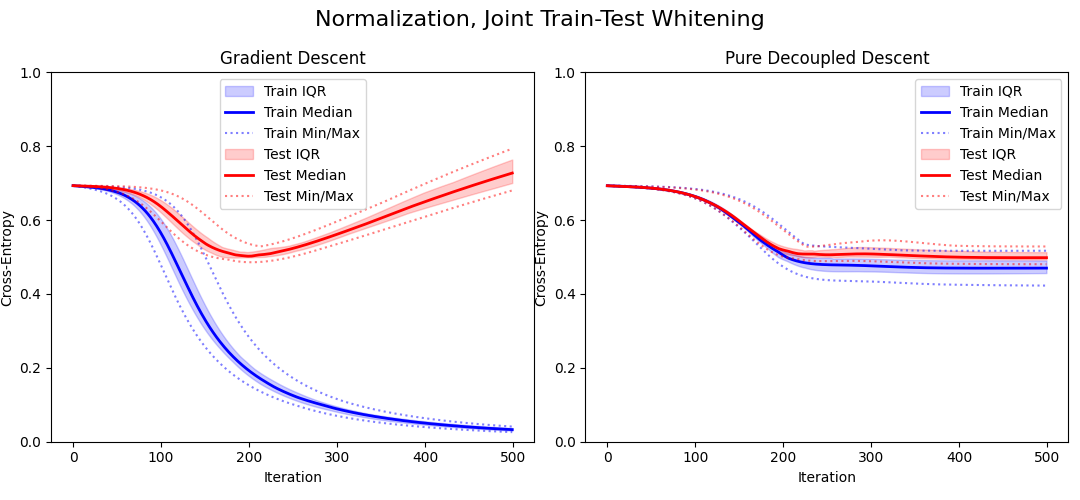}
    \caption{Train/test errors for CIFAR-10 (cats vs. dogs). GD (left) vs. DD (right). Blue/red denote train/test error; solid lines are medians, shaded areas cover the inter-quartile range, dotted lines show min/max, and title is the method of whitening (see Section \ref{sec:CIFAR}). DD reduces overfitting effects compared to GD for training a classification head to ResNet-18 embeddings.} \label{fig:cifar}
\end{figure}

\section{Discussion And Conclusion}\label{sec:conclusion}

To conclude, we presented a novel machine learning training algorithm and demonstrated it solves the train-test error
disconnect present in full-batch GD. We additionally provided some
initial guidance on how to design such algorithms and gave
empirical validation of their success.

\textbf{Limitations} While DD guarantees an exact asymptotic train-test 
identity, the practical deployment faces several constraints. 
The proofs assume Gaussianity on the data $X$, although some relaxation is expected (Remark \ref{rm:universality} (1)). Real-world data, however, contains structured correlations that do not satisfy our assumptions.
Our results are asymptotic and would benefit from specified finite sample rates (Appendix \ref{rm:finite}). We require full-batch training which rules out stochastic methods like SGD; this additionally introduces DD memory and computational overhead due to the correction terms which can result in $\approx L\times$ longer step times (Appendix \ref{sec:mnistTime}). We have only considered models with a wide first layer and finite subsequent layers; over-parameterization of later layers may break the train-test identity, although we do not see this in simulations (Section \ref{sec:MNIST}). Finally, we do not implement a mechanism in DD to represent signal drift, a common second source of overfitting.

\textbf{Broader Impacts} When Assumption \ref{as:main00} approximately holds, DD offers three benefits to practitioners: (1) Our zero-validation principle maximizes data utilization, vital in domains where data is scarce or expensive to collect. (2) The number of training runs can be limited by embedding hyper-parameter tuning into training dynamics. (3) Exact test error tracking gives a honest assessment of generalization, preventing the deployment of overfitted models.

\textbf{Future Directions} We present three future directions. \textbf{Orthogonally Invariant Noise}: Can DD extend to data with dependent elements \cite{zhong2024,fan2021,liu2024} with symmetry properties? It remains open whether the standard Onsager term's partial success on CIFAR-10 is a coincidence or a general property of embedding vectors. \textbf{Stochastic DD}: To reduce the full-batch computational overhead, can recent DMFT derivations for SGD \cite{fan2026,nishiyama2026} inform a mini-batch DD iteration, greatly expanding its practical applicability.
\textbf{Large-Width Networks}: DD currently requires finite-width subsequent layers, adapting deep AMP/DMFT frameworks \cite{yang2023, gerbelot2022,xu2022} to DD with multiple $d$-scaling layers would increase the applicability of these results.

\section*{Acknowledgements}

Thanks to Zhou Fan for the many helpful discussions and support on this project.

\bibliographystyle{plainnat}
\bibliography{newrefs.bib}

\appendix
\section{Deferred Theory}

\subsection{Notation}\label{sec:notation}

Let $[n] = \{1, \dots, n\}$; $\RR$ and $\NN$ denote the real and natural numbers, with $(\cdot)^k$ as the $k$-fold Cartesian product. For $M \in \RR^{n \times L}$, $M_i \in \RR^L$ denotes the $i$-th row. For a block matrix $M$, $M[r,s]$ is its $(r,s)$-th block. $\text{vec}(\cdot)$ and $\text{mat}(\cdot)$ represent standard vectorization and matricization. $\nabla_{(\cdot)}^k$ is the $k$-fold partial derivative. Norms $\|\cdot\|_2, \|\cdot\|_{\infty}, \|\cdot\|_F, \|\cdot\|_{\text{op}}$ are the Euclidean, infinity, Frobenius, and operator norms. $\1\{\cdot\}$ is the indicator function and $\delta_x$ is the Dirac distribution at $x \in \RR$. $\otimes, \odot, \langle \cdot, \cdot \rangle$ denote Kronecker, Hadamard, and inner products (where $\otimes$ additionally means the independent product of two probability measures). $e_B \in \RR^n$ is $(e_B)_i = \1\{i \in B\}$. $O, o, \Theta$ represent standard asymptotic notation with $n$-dependent growth.

\subsection{A Method Of Descent Under The Train-Test Identity}\label{sec:reduceTest}

Suppose an algorithm $\cA$ satisfies the train-test identity and generates the iterate $(\theta_t, h_t, a_t)$. In addition, there exists a number of updates $(\theta^{(k)}_{t+1},
h^{(k)}_{t+1}, a^{(k)}_{t+1})_{k \in [K]}$ generated by $K$ different algorithms $(\cA^{(k)})_{k \in [K]}$, each maintaining the train-test identity.
Each algorithm $\cA^{(k)}$ produces an update with training error
\[\frac{1}{n} \sum_{i=1}^n \cL( \cM_{a^{(k)}_{t+1}}(h^{(k)}_{t+1,i}),
y_i).\label{eq:decoupledProxy}\] And, by the train-test identity, \eqref{eq:decoupledProxy} is an asymptotically consistent estimator for the
test error of update $(\theta^{(k)}_{t+1}, h^{(k)}_{t+1},
a^{(k)}_{t+1})$, i.e. $\EE_{\check x, \check y}[\cL(\cM_{a^{(k)}_{t+1}}(\check
x^\top \theta^{(k)}_{t+1}), \check y)]$. Therefore, reducing the test error at step $t+1$ follows by choosing the update that minimizes \eqref{eq:decoupledProxy}, so long as one exists.
Other algorithms which do not satisfy the train-test identity can fundamentally never offer such a guarantee due to the train-test disconnect from the introduction.

\subsection{Deferred Examples}

\begin{example}\label{ex:data}
    Many common learning problems can be realized by the above data model.
    \begin{enumerate}
        \item The signal-less regression model from the introduction is given with $J = 1$, $\mu_j = 0$ and $\PP_j = \delta_0$ (i.e. a Dirac measure at zero).
        \item A simple classification problem with flipped class noise has $J = 2$, $\mu_1 = v$ and $\mu_2 = -v$, and for some $\epsilon \in (0,1/2)$) we set $\PP_1 = (1-\epsilon) \delta_{-1} + \epsilon \delta_1$ and $\PP_2 = (1-\epsilon) \delta_{1} + \epsilon \delta_{-1}$ with class probabilities $p_1 = p_2 = 1/2$.
        \item A discrete-valued regression problem has $J$ being some large constant, say $100$. We let $\mu_j = j v$ with $v \in \RR^d$. Then let $\PP_j$ be the convolution of $\delta_{c_1 j + c_2}$ and $\PP_{\rm noise}$ where $c_1, c_2 \in \RR$ and $\PP_{\rm noise}$ is the response's noise distribution, finally we can let $p_1, \dots, p_J$ be some arbitrary prior over data examples.
        \item An XOR classification problem has $J = 4$, $\mu_1 = [v,v], \mu_2 = -[v,v], \mu_3 = [-v,v], \mu_4 = [v,-v]$ with $v \in \RR^{d/2}$,  $\PP_j = \delta_{0}$ for $j \leq 2$ and $\PP_j = \delta_{1}$ otherwise. Let each $p_j = 1/4$.
    \end{enumerate}
\end{example}

\begin{example}\label{ex:mlp} Many architectures utilized in practice can be used under our parametric models. Each example below is trained using either MSE loss $\frac{1}{2}(\hat y - y)^2$ or MAD loss $|\hat y - y|$ for simplicity. 
\begin{enumerate}
    \item A simple linear regression model is given by $\model(x) = x^\top \theta$.
    \item A generalized linear model is given by $\model(x) = \sigma(x^\top \theta)$ for a link function $\sigma: \RR \to \RR$.
    \item Let $L = L'$, and let $\sigma, \phi: \RR \to \RR$ be applied element-wise, a two-layer network is given by $\model(x) = \phi(\sigma(x^\top \theta)a)$.
    Deeper networks of finite width are given by setting $a = (\vec(W_1), \dots, \vec(W_{k-1}), w_k)$ where $W_i \in \RR^{L \times L}$ for $i \in [k-1]$, $w_k \in \RR^L$ and selecting $\model(x) = \phi(\sigma(\cdots \sigma(\sigma(x^\top \theta)W_1)\cdots W_{k-1})w_k)$.
    \item Corresponding to Example \ref{ex:data} (4), we could consider a model for the XOR problem with $L = 2, \theta = (\theta_1, \theta_2), L'=1$ and selecting $\model(x) = a(x^\top \theta_1)(x^\top \theta_2)$.
\end{enumerate}
\end{example}

\subsection{Main Assumption}\label{sec:assumption}

\begin{assumption}\label{as:main0}
The following hold when both $n, d \to \infty$, with $n/d \to \alpha \in (0,\infty)$ fixed:
\begin{enumerate}
    \item\label{as:data} [Data Composition] In distribution \eqref{eq:dist}, for each $j,k \in [J]$, $\PP_j$ has bounded moments of all orders, $p_j \in [0,1]$ and $\lim_{d \to \infty} d^{-1}\mu_j^\top \mu_k$ exists. Further, each row of data $(X,y) \in \RR^{n \times d} \times \RR^n$ is drawn \iid from distribution \eqref{eq:dist}.
    \item\label{as:limits} [Initialization Limits] 
    Initialization $\theta_1$ is independent of the data $(X,y)$ and $\lim_{d \to \infty} d^{-1} \theta_1^\top \theta_1 = \bar \theta^2$ almost surely. For each $j \in [J]$, $\lim_{d \to \infty} d^{-1} \theta_1^\top \mu_j = m_{j,1}$ almost surely. The norm of the initialization $\|a_1\|_2$ is uniformly bounded. 
    \item\label{as:algo} [DD Lipschitzness] Uniformly over $y \in \RR$, the functions $f(h, y, a)$, $g(h, y,a)$ and their derivatives $\nabla_h f(h,y,a), \nabla_h g(h,y,a)$ are Lipschitz and bounded in $h$ and $a$.
    \item \label{as:model} [Model And Loss Smoothness] Let $\gradient$ from Definition \ref{def:lossGrad}, uniformly over $y \in \RR$:
    \begin{enumerate}
        \item $\Psi$ is Lipschitz, four times continuously differentiable with respect to
        $h$ and twice continuously differentiable with respect to $a$.
        \item Let $\omega \in \RR^{L \times L}$ be a covariance matrix with bounded operator norm, vectors $m \in \RR^L$ and $a \in \RR^{L'}$ with $\|m\|_2$, $\|a\|_2$ bounded, and let $h = m + \omega^{1/2} G$ where $G \sim \cN(0, \Id_L)$. For each class $j \in [J]$ with $Y_j \sim \PP_j$, there exists a bounded constant $C > 0$ where,
        \begin{align}
    \max\left(\|\EE[\nabla_a^2 \Psi(h, Y_j, a)]\|_{\op}, 
    \|\EE[\nabla_h \nabla_a \Psi(h, Y_j, a)]\|_{\op}, 
    \|\EE[\nabla_h^2 \Psi(h, Y_j, a)]\|_{\op}\right) &\leq C \\
    \max\left(\|\EE[\nabla_h^2 \nabla_a \Psi(h, Y_j, a)]\|_{\Fro}, 
    \|\EE[\nabla_h^3 \Psi(h, Y_j, a)]\|_{\Fro}, 
    \|\EE[\nabla_h^4 \Psi(h, Y_j, a)]\|_{\Fro}\right) &\leq C
\end{align}
    \end{enumerate}
\end{enumerate}

\end{assumption}

\subsection{The Asymptotic Test Error}\label{sec:asyTestError}

We prove the following proposition.

\begin{proposition}\label{prop:testErr}
    If Assumption \ref{as:main0} holds and $\lim_{d \to \infty} \mu_j^\top \theta/d = m_{j,\theta}$, $\lim_{d \to \infty} \theta^\top \theta /d = \Omega_\theta$, $\lim_{n,d \to \infty} a = \bar a$ almost surely for trained parameters $(\theta, a)$, then
    there exists a deterministic function $\testfunc$ depending only on the aforementioned limits such that,
    \[\lim_{d \to \infty} \EE_{\check x, \check y}[\cL(\cM_{\theta, a}(\check x), \check y)] = \testfunc(m_{1,\theta}, \dots, m_{J, \theta}, \Omega_\theta, \bar a).\]
\end{proposition}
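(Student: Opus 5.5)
The plan is to exploit the exact Gaussian structure of a single test point. I condition on the mixture label $j$ of $(\check x,\check y)$, which has probability $p_j$, and write $\check x = \mu_j/d + \check z/\sqrt d$ with $\check z\sim\cN(0,\Id_d)$ independent of $\check y\sim\PP_j$ and of the training data, hence of $(\theta,a)$. Since $\theta$ is fixed once we condition on the data, $\check x^\top\theta = d^{-1}\mu_j^\top\theta + d^{-1/2}\check z^\top\theta$ is exactly Gaussian in $\RR^L$, with mean $d^{-1}\theta^\top\mu_j$ and covariance $d^{-1}\theta^\top\theta$. Therefore
\[\EE_{\check x,\check y}[\cL(\cM_{\theta,a}(\check x),\check y)] = \sum_{j=1}^J p_j\,\EE\big[\Psi(m^{(d)}_{j} + (\Omega^{(d)})^{1/2}G, Y_j, a)\big],\]
where $G\sim\cN(0,\Id_L)$, $m^{(d)}_j = d^{-1}\theta^\top\mu_j$ and $\Omega^{(d)} = d^{-1}\theta^\top\theta$. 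This is an exact, finite-$d$ identity. It already shows that the test error is a deterministic function $F(m^{(d)}_1,\dots,m^{(d)}_J,\Omega^{(d)},a)$ of low-dimensional statistics, and $\testfunc$ is simply $F$ evaluated at the limits, matching Definition~\ref{def:testErr}.

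The remaining step is continuity of $F$ at the limit point, applied on the almost sure event where $m^{(d)}_j\to m_{j,\theta}$, $\Omega^{(d)}\to\Omega_\theta$ and $a\to\bar a$. I handle the arguments separately.

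\emph{Mean and $a$.} Assumption~\ref{as:main0}\eqref{as:model}(a) makes $\Psi$ Lipschitz in $(h,a)$ uniformly over $y$. Hence $|F(m,\Omega,a)-F(m',\Omega,a')|\le C(\max_j\|m_j-m'_j\|+\|a-a'\|)$.

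\emph{Covariance.} I couple the two Gaussians through the same $G$. The Lipschitz bound then gives an error of at most $C\,\EE\|((\Omega^{(d)})^{1/2}-\Omega_\theta^{1/2})G\|\le C\|(\Omega^{(d)})^{1/2}-\Omega_\theta^{1/2}\|_{\Fro}$. The matrix square root is continuous on positive semidefinite matrices, even if $\Omega_\theta$ is singular (Hölder-$1/2$ continuity suffices), so this term vanishes.

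I also need the expectation to be finite in the first place. This follows from $|\Psi(h,y,a)|\le|\Psi(0,y,0)|+C(\|h\|+\|a\|)$ together with the moment assumption on $\PP_j$. Strictly, that uses some integrability of $\Psi(0,Y_j,0)$; I would read this off Assumption~\ref{as:main0}\eqref{as:model}(b) or the bounded-moment condition with polynomial growth in $y$.

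The only delicate point is the square-root continuity when the limiting covariance is degenerate, together with this integrability of $\Psi(0,Y_j,0)$. Both are routine, so I expect no real obstacle. The substance of the proof is the exact Gaussian conditioning in the first step.
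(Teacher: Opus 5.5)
Your proposal is correct and follows the paper's route: you reduce the test error exactly to a $p_j$-weighted sum of Gaussian expectations with mean $d^{-1}\mu_j^\top\theta$ and covariance $d^{-1}\theta^\top\theta$, then pass to the limit using that $\Psi$ is Lipschitz in $(h,a)$. Your coupling through a common $G$, combined with H\"older-$1/2$ continuity of the PSD square root, is a quantitative version of the paper's one-line dominated-convergence step, and your remark on the integrability of $\Psi(0,Y_j,0)$ flags a point the paper also leaves implicit.
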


\begin{proof}

Consider the test error from \eqref{eq:testDesc} with $\beta = (\theta, a)$ denoting $\check L(\theta, a) = \EE_{\check x, \check y}[\cL(\model(\check x), \check y)]$ where $(\check x, \check y)$ are drawn from distribution \eqref{eq:dist}, we write
\[
\lim_{d \to \infty} \check L(\theta, a) = \lim_{d \to \infty} \sum_{j=1}^J p_j \EE[\cL(\modelshort(\mu_j^\top \theta/d + \check Z_\theta), \check Y_j)],
\]
where $\check Z_\theta \sim \cN(0, \theta^\top \theta/d)$ and $\check Y_j \sim \PP_j$.
Under the assumption that $\mu_j^\top \theta/d$, $\theta^\top \theta / d$ and $a$ have almost sure limits $m_{j,\theta}$ (for each $j \in [J]$), $\Omega_\theta$ and $\bar a$ respectively, we have that
\[\label{eq:asyTest}
        \lim_{d \to \infty} \check L(\theta, a) = \sum_{j=1}^J p_j \EE_{\substack{\check Z_\theta \sim \cN\left(0, \Omega_\theta \right)\\\check Y_j \sim \PP_j}}[\cL(\cM_{\bar a}(m_{j,\theta} + \check Z_\theta), \check Y_j)],
\]
by dominated convergence as Assumption \ref{as:main00} assumes $\Psi(h,y,a) = \cL(\cM_a(h), y_j)$ is Lipschitz. Noting the right hand side of \eqref{eq:asyTest} is from Definition \ref{def:testErr} gives the proof.
\end{proof}

\subsection{Full Definition Of State Evolution}\label{sec:fullSe}

As we see momentarily, DD can be equivalently written as an AMP algorithm, thus off-the-shelf AMP results immediately provide a distributional characterization of the pre-activations $h_1, \dots, h_t \in \RR^{n \times L}$ and parameters $\theta_1, \dots, \theta_t \in \RR^{d \times L}$, $a_1, \dots, a_t
\in \RR^{L'}$. This description is given by a set of low-dimensional recursive equations termed \textit{state evolution}. Informally, we derive a recursion of variables $\Omega_t$, $\Sigma_t$, $\Xi_t$, $m_{1,t}, \dots, m_{J,t}, \bar a_t$ which represent the almost sure limits of (self-)overlaps between $\theta_t, \mu_j$ and $\tilde \theta_t$ alongside the limiting value of $a_t$.

\begin{definition}\label{def:ddState}
Let $(\PP_j)_{j \in [J]}, (\chi_{j,k})_{j,k \in [J]},
(p_j)_{j \in [J]}$ be from Assumption \ref{as:main0} \eqref{as:data},
$(m_{j,1})_{j \in [J]}, \bar \theta^2$ be the limits from Assumption \ref{as:main0}
\eqref{as:limits} and let $g,f$ be the functions from Assumption
\ref{as:main0} \eqref{as:algo}.

Define the following state evolution parameters $m_{j,t} \in \RR^L$ for $j \in [J],
t \in [T]$, block matrices $\Sigma_t, \Omega_t \in (\RR^{L \times L})^{(t+1)
\times (t+1)}, \Xi_t \in (\RR^{L \times L})^{t \times t}$ and $\bar a_t \in
\RR^{L'}$ for $t \in [T]$ recursively as follows. Recall the indexing $\ss{r,s}$
denotes the $(r,s)$-th $L \times L$ block of a given matrix, we have
    \begin{align}
    \Sigma_t\ss{r,s} &= \sum_{j=1}^J p_j \EE[g(G^{r} + m_{j,r}, Y_j, \bar a_r) g(G^{s} + m_{j,s}, Y_j, \bar a_s)^\top ]\\
    l_{j,t} &= p_j \EE[g(G^t + m_{j,t}, Y_j, \bar a_t)]\\
    \Xi_{t}\ss{r+1, s} &= \eta_{0} \Xi_{t-1}\ss{r,s} - \eta_{1} \Big(\alpha \Sigma_{t}\ss{r,s} + \alpha^2 \sum_{j,k=1}^J \chi_{j,k} l_{j,r} l_{k,s}^\top \Big)\\
    \Omega_{t+1}\ss{r+1,s+1} &= \eta_{0}^2 \Omega_{t}\ss{r,s} - \eta_{0} \eta_{1}\left(\Xi_{t}\ss{r,s} + \Xi_{t}\ss{s,r}^\top\right) + \eta_{1}^2\Big(\alpha \Sigma_{t}\ss{r,s} + \alpha^2 \sum_{j,k=1}^J \chi_{j,k} l_{j,r} l_{k,s}^\top \Big)\\ 
    m_{j,t+1} &= \eta_{0} m_{j,t} - \eta_{1} \alpha \sum_{k=1}^J \chi_{j,k} l_{k,t}\\
    \bar a_{t+1} &= \gamma_{0} \bar a_t - \gamma_{1} \sum_{j=1}^J p_{j} \EE[f(G^t + m_{j,t}, Y_j, \bar a_t)]
\end{align}
with indices $r,s \in [t]$ and $j \in [J]$. The expectations are taken over $Y_j
\sim \PP_j$, the random vectors $G^t \sim \cN(0, \Omega_t\ss{t,t})$, and the pairs
$(G^r, G^s) \sim \cN\left(0, \left[\begin{smallmatrix}\Omega_t\ss{r,r} &
\Omega_t\ss{r,s} \\ \Omega_t\ss{s,r} &
\Omega_t\ss{s,s}\end{smallmatrix}\right]\right)$. The system is initialized with $\bar a_1 = a_1$,
$\Omega_t\ss{1,1} = \Omega_1\ss{1,1} = \bar \theta^2$, $\Xi_t\ss{1,s} =
\alpha \sum_{j=1}^J m_{j,1} l_{j,s}^\top$, and $\Omega_{t+1}\ss{1, s+1} =
\eta_{0} \Omega_t\ss{1,s} - \eta_{1} \Xi_t\ss{1,s} =
\Omega_{t+1}\ss{s+1,1}^\top$.
\end{definition}

\begin{remark}\label{rem:ddstate}
    As a special case of Definition \ref{def:ddState}, we
    have the recursion,
    \[\Omega_{t+1}\ss{t+1, t+1} = \eta_{0}^2 \Omega_t\ss{t,t} -
    \eta_{1} \eta_{0} (\Xi_t\ss{t,t} + \Xi_t\ss{t,t}^\top) +
    \eta_{1}^2 \left(\alpha \Sigma_t\ss{t,t} + \alpha^2
    \sum_{j,k=1}^J \chi_{j,k} l_{j,t} l_{k,t}^\top\right).\]
\end{remark}

\subsection{Approximate Message Passing And Proving Lemma \ref{lem:ddState}}\label{sec:AMP}

For Lemma \ref{lem:ddState} and Theorem \ref{thm:ddLoss}, we require the following assumption.
\begin{assumption}\label{as:phi} 
    Uniformly over $y \in \RR$, function $\phi(h, y, a): \RR^{L} \times \RR \times \RR^{L'} \to \RR$ is Lipschitz in $h$ and $a$. Moreover, there exists a bounded constant $C > 0$, independent of $t$, where the matrices from Definition \ref{def:ddState} satisfy $\max_{t \in [T]} \max(\|\Sigma_t^{-1}\|_{\op}, \|\Omega_t^{-1}\|_{\op}, \|\Sigma_T\ss{t,t}\|_{\op}, \|\Omega_T\ss{t,t}\|_{\op}) \le C$.
\end{assumption}

\subsubsection{Relating Back To AMP}

Consider the original DD algorithm dependent on the activation functions $g: \RR^{L} \times \RR \times \RR^{L'} \to \RR^L$ and $f: \RR^{L} \times \RR \times \RR^{L'} \to \RR$ and hyperparameters $\eta_0, \eta_1, \gamma_0, \gamma_1$ from Equation \eqref{eq:dd}, we repeat the algorithm below for convenience below,
\[\label{eq:appdd}\begin{split}
    h_t &= X \theta_t + \eta_{1} \sum_{s=1}^{t-1} \eta_{0}^{(t-1)-s} \hat h_{s}\\
    \hat h_t &= g(h_t, y, a_t)\\
    \tilde \theta_t &= X^\top \hat h_t - \alpha  \left(\frac{1}{n} \sum_{i=1}^n
    \nabla_h g(h_{t,i}, y_i, a_t) \right)\theta_t\\ 
    \theta_{t+1} &= \eta_{0} \theta_t - \eta_{1} \tilde \theta_t\\
    a_{t+1} &= \gamma_{0} a_t - \gamma_{1} \frac{1}{n} \sum_{i=1}^n f(h_{t,i}, y_i, a_t). \end{split}\]

Note, the above algorithm is invariant to a permutation of the rows of $X$, thus we fix a representation with $X = \frac{1}{d}S + Z = \frac{1}{d} \sum_{j=1}^J e_{B_j} \mu_j^\top + Z$ where $B_1, \dots, B_J$ partition $[n]$ and $Z$ is an element-wise independent Gaussian matrix with $Z_{ij} \sim \cN(0,1/d)$.
Moreover, by Assumption \ref{as:main0} \eqref{as:data}, we have the almost sure limits $\lim_{n\to \infty} |B_j|/n = p_j$. 

Therefore, we can equivalently write \eqref{eq:appdd} as,
\[\label{eq:appdd2}\begin{split}
    h_t &= \sum_{j=1}^J e_{B_j} \frac{\mu_j^\top \theta_t}{d} + Z\theta_t + \eta_{1} \sum_{s=1}^{t-1} \eta_{0}^{(t-1)-s} \hat h_{s}\\
    \hat h_t &= g(h_t, y, a_t)\\
    \tilde \theta_t &= \sum_{j=1}^J \mu_j \frac{n}{d} \frac{e_{B_j}^\top \hat h_t}{n} +  Z^\top \hat h_t - \alpha  \left(\frac{1}{n} \sum_{i=1}^n
    \nabla_h g(h_{t,i}, y_i, a_t) \right)\theta_t\\ 
    \theta_{t+1} &= \eta_{0} \theta_t - \eta_{1} \tilde \theta_t\\
    a_{t+1} &= \gamma_{0} a_t - \gamma_{1} \frac{1}{n} \sum_{i=1}^n f(h_{t,i}, y_i, a_t). \end{split}\]

\textbf{We prove Lemma \ref{lem:ddState} in three steps.}

(1) We replace $\frac{\mu_j^\top \theta_t}{d}$, $\frac{e_{B_j}^\top \hat
h_t}{d}$ and $a_t$ by pre-specified vectors $m^\top_{j,t}, l_{j,t}^\top \in \RR^{1 \times L}$ and
$\bar a_t \in \RR^{L'}$ respectively. Moreover, with $\tilde Z \in \RR^{n
\times d}$, we couple $Z = \alpha^{1/2} \tilde Z$ and replace $Z$ by
$\alpha^{1/2} \tilde Z$ noting that $\tilde Z$ has independent $\cN(0,1/n)$
entries.

We then analyze the following {\it frozen} algorithm, recalling that $\alpha = n/d$, 
\[\label{eq:frozenDd}\begin{split}
    h^{\rm frozen}_t &= \sum_{j=1}^J e_{B_j} m^\top_{j,t} + \alpha^{1/2} \tilde Z\theta^{\rm frozen}_t + \eta_{1} \sum_{s=1}^{t-1} \eta_{0}^{(t-1)-s} \hat h^{\rm frozen}_{s}\\
    \hat h^{\rm frozen}_t &= g(h^{\rm frozen}_t, y, \bar a_t)\\
    \tilde \theta^{\rm frozen}_t &= \alpha \sum_{j=1}^J \mu_j l_{j,t}^\top +  \alpha^{1/2} \tilde Z^\top \hat h^{\rm frozen}_t - \alpha  \left(\frac{1}{n} \sum_{i=1}^n
    \nabla_h g(h^{\rm frozen}_{t,i}, y_i, \bar a_t) \right)\theta^{\rm frozen}_t\\ 
    \theta^{\rm frozen}_{t+1} &= \eta_{0} \theta^{\rm frozen}_t - \eta_{1} \tilde \theta^{\rm frozen}_t.
\end{split}\]
The state evolution for this algorithm is given in Appendix \ref{sec:reparm}.

(2) We prove that if $m^\top_{j,t}$, $l^\top_{j,t}$ and $\bar a_t$ are the almost
sure limits of $\frac{\mu_j^\top \theta_t}{d}$, $\frac{e_{B_j}^\top \hat h_t}{d}$
and $a_t$ respectively, then for a suitable class of test functions, say
represented by $\phi: \RR^{n \times (L \cdot T)} \times \RR^n \times \RR^{L'
\cdot T} \to \RR^n$ and $\varphi: \RR^{d \times (L \cdot T)} \times \RR^{L'
\cdot T} \to \RR^n$, we have that both, 
    \[\lim_{n \to \infty} \frac{1}{n} \sum_{i=1}^n \phi(h_1, \dots, h_T, y, a_1, \dots, a_T)_i -
    \frac{1}{n} \sum_{i=1}^n \phi(h^{\rm frozen}_1, \dots, h^{\rm frozen}_T, y, \bar
    a_1, \dots, \bar a_T)_i = 0\]
    and 
    \[\lim_{n \to \infty} \frac{1}{d} \sum_{i=1}^d \varphi(\tilde \theta_1, \dots,
    \tilde \theta_T, a_1, \dots, a_T)_i -
    \frac{1}{d} \sum_{i=1}^d \varphi(\tilde \theta^{\rm frozen}_1, \dots, \tilde \theta^{\rm frozen}_T, \bar
    a_1, \dots, \bar a_T)_i = 0,\]
    almost surely.
This is the content of Appendix \ref{sec:lip}.

(3) We derive the desired almost sure limits $m_{j,t}$, $l_{j,t}$ and $\bar a_t$ in Appendix \ref{sec:almostSureLims}. Combining these three steps proves Lemma \ref{lem:ddState}.

\subsubsection{A Re-parameterized State Evolution}\label{sec:reparm}

For this subsection, we drop the superscript {\it frozen} for notational
convenience. We continue with our analysis of \eqref{eq:frozenDd}, consider the
change of variables
\[\label{eq:reparam}\begin{split} z_t = \alpha^{-1/2} \left(h_t - 
    \sum_{j=1}^J e_{B_j} m_{j,t}^\top\right),& \quad h_t = \alpha^{1/2} z_t + 
    \sum_{j=1}^J e_{B_j} m_{j,t}^\top\\
    w_t = \alpha^{-1/2}\left(\tilde \theta_t -  \alpha \sum_{j=1}^J
    \mu_j l_{j,t}^\top \right),& \quad \tilde \theta_t = \alpha^{1/2} w_t + 
    \alpha \sum_{j=1}^J \mu_j l_{j,t}^\top . \end{split}\]

Then, to analyze algorithm \eqref{eq:frozenDd}, it suffices to analyze the following algorithm and undo the above change of variables,
\[\label{eq:frozenAmp}\begin{split}
    z_t &= \tilde Z\theta_t + \alpha^{-1/2} \eta_{1} \sum_{s=1}^{t-1} \eta_{0}^{(t-1)-s} \hat h_{s}\\
    \hat h_t &= g\left(\alpha^{1/2} z_t +  \sum_{j=1}^J e_{B_j} m^\top_{j,t}, y, \bar a_t\right)\\
    w_t &= \tilde Z^\top \hat h_t - \alpha^{1/2}  \left(\frac{1}{n} \sum_{i=1}^n
    \nabla_h g\left(\left(\alpha^{1/2} w_{t} + \alpha \sum_{j=1}^J \mu_j l_{j,t}^\top\right)_i, y_i, \bar a_t\right) \right)\theta_t\\ 
    \theta_{t+1} &= \eta_{0}^t \theta_1 - \eta_{1} \sum_{s=1}^t \eta_{ 0}^{t-s} \left( \alpha^{1/2} w_s +  \alpha \sum_{j=1}^J \mu_j l_{j,s}^\top \right),\end{split}\]
where the final equality comes from unrolling the recursion of $\theta_{t+1} = \eta_{ 0} \theta_t - \eta_{ 1} \tilde \theta_t$ and changing variables. Then, we denote
\[\label{eq:GF}\begin{split}
    G(z_t) &= g\Bigg(\alpha^{1/2} z_t + \sum_{j=1}^J e_{B_j} m_{j,t}^\top, y, \bar a_t\Bigg)\\
    F_t(w_1, \dots, w_t) &= \eta_{ 0}^t \theta_1 - \eta_{ 1} \sum_{s=1}^t \eta_{ 0}^{t-s} \Bigg(\alpha^{1/2} w_s + \alpha \sum_{j=1}^J \mu_j l_{j,s}^\top\Bigg)
\end{split}\]

Now, treating $y, \mu_1, \dots, \mu_J, e_{B_1}, \dots, e_{B_j}, \theta_1$ as fixed
vectors, we can equivalently write \eqref{eq:frozenAmp} as,
\[\label{eq:frozenAmpSimp}\begin{split}
    z_t &= \tilde Z\theta_t - \sum_{s=1}^t \hat h_s \bB_{t,s}^\top \\
    \hat h_t &= G(z_t)\\
    w_t &= \tilde Z^\top \hat h_t - \theta_t \bC_t^\top\\ 
    \theta_{t+1} &= F_t(w_t),\end{split}\]
where, denoting the derivative with respect to a specified variable $(\cdot)$ as $\partial_{(\cdot)}$,
\begin{align}
\bB_{t,s} &= \frac{1}{n} \sum_{i=1}^d \partial_{w_{s,i}} F_{t-1}(w_{1,i}, \dots, w_{t-1,i}) = \frac{1}{d} \alpha^{-1} \sum_{i=1}^d \partial_{w_{s,i}} F_{t-1}(w_{1,i}, \dots, w_{t-1,i})\\
&= - \eta_{ 1} \eta_{ 0}^{(t-1)-s} \alpha^{-1/2} \Id\\
\bC_t &= \frac{1}{n} \sum_{i=1}^n \partial_{z_{t,i}} G(z_{t,i}) = \alpha^{1/2} n^{-1} \sum_{i=1}^n \nabla_h g(\alpha^{1/2}z_{t,i} + m_{j(i), t}^\top, y_i, \bar a_t)^\top,
\end{align}
with $j(i)$ mapping a coordinate $i \in [n]$ to its corresponding block index of $B_1,
\dots, B_J$. Note, we can immediately check that $G$ is Lipschitz as $g$ is
Lipschitz in $z_t$ (see Assumption
\ref{as:main0} \eqref{as:algo}) where, 
\begin{align}
    \|G(z_t) - G(\tilde z_t)\|_{\Fro} &= \left\|g\left( \alpha^{1/2} z_t + \sum_{j=1}^J e_{B_j} m_{j,t}^\top, y, \bar a_t\right) - g\left( \alpha^{1/2} \tilde z_t + \sum_{j=1}^J e_{B_j} m_{j,t}^\top, y, \bar a_t\right)\right\|_{\Fro}\\
    & \leq L \|\alpha^{1/2}(z_t - \tilde z_t)\|_{\Fro}\\
    &\leq (L \alpha^{1/2})\|z_t - \tilde z_t\|_{\Fro},
\end{align}
and $F_t$ is clearly Lipschitz in $w_1, \dots, w_t$ as it is a linear function of $w_1, \dots, w_t$ with bounded coefficients.

We can then see that algorithm \eqref{eq:frozenAmpSimp} is a full-history
matrix-valued extension of the non-separable AMP theory from \cite{berthier2017}
(or see \cite{lovig2025} for a full-history vector-valued AMP algorithm). Using
the standard construction of reformatting the vector amp iterates  (temporarily overloading the definition of the iterates $z_t, w_t$) $(z_1, w_1), \dots,
(z_{t\cdot L}, w_{t \cdot L}) \in \RR^{2 \times n}$ into matrix amp iterates $z^{\rm mat}_t$ and $w^{\rm mat}_t$ with
\[z^{\rm mat}_t = [z_{(t-1)+1}, \dots, z_{t \cdot  L - 1}], \quad w^{\rm mat}_t
= [w_{(t-1)+1}, \dots, w_{t \cdot  L - 1}],\] we can conclude that
\eqref{eq:frozenAmpSimp} uses the correct Onsager correction terms and is a valid
rectangular AMP algorithm. We leave further details of matrix valued AMP
iterates to the survey \cite[Section 6.7]{feng2021} and \cite{javanmard12}. We
then have the following low dimensional prescription of the iterates $(z_t,
w_t)_{t \in [T]}$ from \eqref{eq:frozenAmpSimp}.

\begin{definition}\label{def:frozenAmpState}
    Given the functions $G,F_t$ from \eqref{eq:GF} and deterministic vector $\theta_1$, with initialization $\frac{\alpha^{-1}}{d}\theta_1^\top \theta_1 = \Omega^z_1$, we recursively define the sequence of matrices $\Sigma^w_1, \dots, \Sigma^w_T, \Omega^z_2, \dots, \Omega^z_{T+1}$ with
    \begin{align}
        \Sigma^w_t\ss{r,s} &= \frac{1}{n} \EE[G(Z_r)^\top G(Z_s)], \quad r,s \in [t]\\
        &(Z_r, Z_s) \sim \cN\left(0, \begin{bmatrix}\Omega^z_t\ss{r,r} & \Omega^z_t\ss{r,s} \\ \Omega^z_t\ss{s,r} & \Omega^z_t\ss{s,s}\end{bmatrix} \otimes \Id_n \right)\\
        \Omega^z_{t+1}\ss{r+1,s+1} &= \frac{1}{n} \EE[F_r(W_1, \dots, W_r)^\top F_s(W_1, \dots, W_s)], \quad r,s \in [t]\\ 
        &= \frac{\alpha^{-1}}{d} \EE[F_r(W_1, \dots, W_r)^\top F_s(W_1, \dots, W_s)]\\
        \Omega^z_{t+1}\ss{1,s+1} &= \frac{\alpha^{-1}}{d} \theta_1^\top \EE[F_s(W_1, \dots, W_s)] = \Omega^z_{t+1}\ss{s+1,1}^\top, \quad s \in [t]\\
        \Omega^z_{t+1}\ss{1,1} &= \Omega^z_1\\
        &(W_1, \dots, W_t) \sim \cN\left(0, \Sigma^w_t \otimes \Id_d \right).
    \end{align}
\end{definition}

\begin{lemma}[\cite{lovig2025}, Theorem 3.3  (extended to matrix-valued AMP
    algorithms)]\label{lem:appSE} Let $\phi_1,\phi_2: \RR^{n
    \times (L \cdot T)} \to \RR^n, \varphi_1, \varphi_2: \RR^{d \times (L \cdot
    T)} \to \RR^d$ each be uniformly Lipschitz function in $n,d$. Let $(z_t,
    w_t)$ be the iteration \eqref{eq:frozenAmpSimp}, and denote $(Z_1, \dots, Z_T) \sim
    \cN(0,\Omega^z_T \otimes \Id_n)$, $(W_1, \dots, W_T) \sim \cN(0, \Sigma^w_T
    \otimes \Id_d)$ from Definition \ref{def:frozenAmpState}. If $\max_{t \in [T]}\max(\|\Sigma^w_t\|_{\op}, \|(\Sigma^w_t)^{-1}\|_{\op}, \|\Omega^z_t\|_{\op}, \|(\Omega^z_t)^{-1}\|_{\op})$ is bounded almost surely, then the following
    limits hold almost surely, 
    \begin{align}
        \lim_{n \to \infty} \frac{1}{n} \sum_{i=1}^n \phi(z_1, \dots, z_T)_i - \lim_{n \to \infty} \frac{1}{n} \sum_{i=1}^n \EE[\phi(Z_1, \dots, Z_T)_i] &= 0\\
        \lim_{d \to \infty} \frac{1}{d} \sum_{i=1}^d \varphi(w_1, \dots, w_T)_i - \lim_{d \to \infty} \frac{1}{d} \sum_{i=1}^d \EE[\varphi(W_1, \dots, W_T)_i] &= 0,
    \end{align}
    where $\phi(\cdot)_i = \phi_1(\cdot)_i\phi_2(\cdot)_i$ and
    $\varphi(\cdot)_i = \varphi_1(\cdot)_i\varphi_2(\cdot)_i$.
\end{lemma}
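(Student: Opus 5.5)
The plan is to deduce the statement from the vector-valued, full-history, non-separable AMP theorem of \cite{lovig2025} (Theorem 3.3) by flattening the matrix-valued iteration \eqref{eq:frozenAmpSimp} into a scalar one. If the flattening does not preserve the needed structure, the fallback is to rerun that theorem's conditioning argument with matrix-valued Gram matrices.

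\textbf{Step 1: flattening.} We write each $z_t\in\RR^{n\times L}$ and $w_t\in\RR^{d\times L}$ as $L$ consecutive scalar iterates, indexed by $(t,\ell)$ in lexicographic order. The catch is that the $\ell$-th column of $G(z_t)$ depends on all $L$ columns of $z_t$, including those with larger $\ell$. To handle this, we split each matrix step into $L$ scalar sub-steps in which only the linear parts $\tilde Z\theta_t$ and $\tilde Z^\top \hat h_t$ are generated. The nonlinearity is applied only at the last sub-step of the block, when all $L$ columns are available. Since the full-history theory allows the denoiser at scalar step $k$ to depend on all iterates up to $k$, this is a legitimate vector AMP. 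We then check that the Onsager coefficients of the flattened algorithm are exactly the $L\times L$ blocks $\bB_{t,s}=-\eta_1\eta_0^{(t-1)-s}\alpha^{-1/2}\Id$ and $\bC_t$ stated after \eqref{eq:frozenAmpSimp}. This holds because the scalar memory coefficient of column $\ell$ with respect to column $\ell'$ is the $(\ell,\ell')$ entry of the averaged Jacobian. We also check that the scalar state evolution covariances, reassembled into $L\times L$ blocks, are exactly $\Omega^z_t$ and $\Sigma^w_t$ of Definition \ref{def:frozenAmpState}.

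\textbf{Step 2: invoking \cite{lovig2025}.} That theorem needs Lipschitz denoisers, which we have: $G$ is $L\alpha^{1/2}$-Lipschitz and $F_t$ is affine with bounded coefficients. It also needs nondegenerate state evolution covariances. Because the flattened covariance at scalar time $(t,\ell)$ is a principal submatrix of $\Omega^z_T$ (respectively $\Sigma^w_T$), the assumed bounds on $\|(\Omega^z_t)^{-1}\|_{\op}$ and $\|(\Sigma^w_t)^{-1}\|_{\op}$ give it by Cauchy interlacing.

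\textbf{Step 3: test functions.} The functions $\phi=\phi_1\phi_2$ and $\varphi=\varphi_1\varphi_2$ are products of uniformly Lipschitz row-wise functions, so they are pseudo-Lipschitz of order two: $|\phi_i(u)-\phi_i(v)|\le C(1+\|u_i\|+\|v_i\|)\|u_i-v_i\|$. This is the class covered by \cite{lovig2025}. Its almost-sure convergence statement then transfers directly, with the Gaussian expectations taken under $\cN(0,\Omega^z_T\otimes\Id_n)$ and $\cN(0,\Sigma^w_T\otimes\Id_d)$.

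\textbf{Main obstacle.} I expect the hardest part to be Step 1: the dummy sub-steps must not create degenerate scalar iterates, and the non-separable Onsager bookkeeping must match exactly. If it fails, I would instead redo the Bolthausen conditioning argument directly for matrix iterates. In that argument one conditions on $\tilde Z\Theta_t$ and $\tilde Z^\top H_t$, where $\Theta_t=[\theta_1,\dots,\theta_t]$ and $H_t=[\hat h_1,\dots,\hat h_t]$. One then projects with the $tL\times tL$ Gram matrices $\Theta_t^\top\Theta_t/n$ and $H_t^\top H_t/n$, whose inverses are controlled by the same nondegeneracy assumption. Finally, one runs the induction on $t$ with the matrix-valued Stein identity supplying the correction terms $\bB_{t,s}$ and $\bC_t$.
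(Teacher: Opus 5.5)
Your route is the paper's own: the paper simply asserts that regrouping $L$ consecutive vector iterates into one matrix iterate makes the frozen iteration a valid full-history rectangular AMP, and then cites \cite{lovig2025}. You correctly flag the causality problem the paper skips: column $\ell$ of $G(z_t)$ depends on columns $\ell'>\ell$ of $z_t$.

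\textbf{Step 1 as written does not work.} In a rectangular AMP, each scalar step $k$ outputs a single $\hat h_k$, which is a function of $z_1,\dots,z_k$ only, and immediately forms $w_k=\tilde Z^\top\hat h_k-\cdots$. So within your $L$ sub-steps the columns of $\tilde Z^\top\hat h_t$ cannot be produced; at most one can, at the last sub-step. Nor can the remaining $w$-columns be pushed into the next block's sub-steps, because every column of $\theta_{t+1}$ needs all of $w_t$. Causality therefore forces the order $z$-block $t<w$-block $t<z$-block $t+1$. That costs roughly $2L-1$ scalar steps per matrix step, with $L-1$ dummy $\hat h$'s in the $z$-phase and $L-1$ dummy $\theta$'s in the $w$-phase.

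\textbf{Step 2 then breaks.} With dummies present, the flattened state-evolution covariances are no longer principal submatrices of $\Omega^z_T$ and $\Sigma^w_T$, so Cauchy interlacing says nothing about them. Worse, take the natural zero dummies, which are exactly the ones whose Onsager coefficients vanish. The dummy iterates are then $\tilde Z^\top 0=0$ and $\tilde Z\,0=0$. The flattened covariances become singular, which violates precisely the nondegeneracy hypothesis you need in order to invoke \cite{lovig2025}.

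\textbf{What is missing is a nondegenerate, decoupled choice of dummies.} For example, take Gaussian vectors $\xi\in\RR^n$ and $\zeta\in\RR^d$, independent of $\tilde Z$ and treated as side information like $y$. Genuine denoisers never read dummy iterates, and dummy denoisers read no iterates at all. Hence the flattened Onsager coefficients reduce exactly to $\bB_{t,s}$ and $\bC_t$. The cross-covariances between dummy and genuine iterates, such as $\frac1n\xi^\top\EE[G(Z_s)]$, vanish almost surely by independence. So the flattened covariances converge to permutations of $\mathrm{diag}(\Omega^z_T,c\,\Id)$ and $\mathrm{diag}(\Sigma^w_T,c\,\Id)$, whose smallest eigenvalues your hypothesis does control.

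Two other repairs also work. You can remove the nondegeneracy requirement by a small-perturbation argument. Or you can carry out your fallback: condition directly on $\tilde Z[\theta_1,\dots,\theta_t]$ and $\tilde Z^\top[\hat h_1,\dots,\hat h_t]$ with the $tL\times tL$ Gram matrices, which avoids flattening entirely.

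\textbf{Minor point on Step 3.} The $\phi_k$ and $\varphi_k$ in the statement are non-separable maps $\RR^{n\times LT}\to\RR^n$, so the row-wise bound you write is not available. What you need, and what Cauchy--Schwarz gives, is that $u\mapsto\frac1n\langle\phi_1(u),\phi_2(u)\rangle$ is uniformly pseudo-Lipschitz of order two with respect to $\|u\|_{\Fro}/\sqrt n$. That is the test-function class of the non-separable theory.
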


\begin{definition}\label{def:frozenDdState} 
Let Assumption \ref{as:main0} hold with $(\PP_j)_{j \in [J]}$ from Assumption \ref{as:main0} \eqref{as:data}, $\bar \theta^2$ from Assumption \ref{as:main0} \eqref{as:limits}, and $g$ be from Assumption
\ref{as:main0} \eqref{as:algo}.
    
Given initialization $\check \Omega_1 = \bar \theta^2$, sequences $(m_{j,t})_{j \in [J], t \in [T]}$, $(l_{j,t})_{j \in [J], t \in [T]}$ and $(\bar a_t)_{t \in [T]}$, we recursively define
the sequence of matrices $\check \Sigma_1, \dots, \check \Sigma_T, \check
\Omega_2, \dots, \check \Omega_{T+1}$ with
        \begin{align}
        \check \Sigma_t\ss{r,s} &= \sum_{j=1}^J p_j \EE[g(\check Z_{r} + m_{j,r}, Y_j, \bar a_r) g(\check Z_{s} + m_{j,s}, Y_j, \bar a_s)^\top ], \quad r,s \in [t]\\
        &(\check Z_r, \check Z_s) \sim \cN\left(0, \begin{bmatrix}\check \Omega_t\ss{r,r} & \check \Omega_t\ss{r,s} \\ \check \Omega_t\ss{s,r} & \check \Omega_t\ss{s,s}\end{bmatrix}\right)\\
        \check \Xi_{t}\ss{r+1, s} &= \eta_{ 0} \check \Xi_{t-1}\ss{r,s} -
    \eta_{ 1} \Big(\alpha \check \Sigma_{t}\ss{r,s} +
    \alpha^2 \sum_{j,k=1}^J \chi_{j,k} l_{j,r} l_{k,s}^\top \Big), \quad r \in [t-1], s \in [t]\\
        \check \Xi_t\ss{1,s} &=  \alpha \sum_{j=1}^J
    m_{j,1} l_{j,s}^\top, \quad s \in [t]\\
        \check \Omega_{t+1}\ss{r+1,s+1} &= \eta_{ 0}^2 \check \Omega_{t}\ss{r,s} \-
    \eta_{ 0} \eta_{1}\left(\check \Xi_{t}\ss{r,s} \+ \check \Xi_{t}\ss{s,r}^\top\right) \+
    \eta_{ 1}^2\Big(\alpha \check \Sigma_{t}\ss{r,s} \+ \alpha^2\!\!\!
    \sum_{j,k=1}^J\!\!\chi_{j,k} l_{j,r} l_{k,s}^\top \Big)\\ 
    &\quad r,s \in [t]\\
    \check \Omega_{t+1}\ss{1, s+1} &= \eta_{ 0}
    \check \Omega_t\ss{1,s} - \eta_{1} \check \Xi_t\ss{1,s} = \check \Omega_{t+1}\ss{s+1,1}^\top, \quad s \in [t]\\ 
    \check \Omega_{t+1}\ss{1,1} &= \bar \theta^2,
    \end{align}
\end{definition}

\begin{corollary}\label{cor:appSe} Let $(h_t, \hat h_t, \tilde \theta_t, \theta_t)$
    be the iteration from \eqref{eq:frozenDd} and let Assumption \ref{as:main0}
    hold. The following limits hold almost surely, for $j \in [J], r,s,t \in [T]$,
        \begin{align}
        \lim_{d \to \infty} \bigg|\frac{1}{d} \mu_j^\top \theta_t &- \frac{1}{d} \mu_j^\top \EE[(\alpha^{1/2} \check W_t + \alpha \sum_{j=1}^J \mu_j l^\top_{j,t})]\bigg| = 0\\
        \lim_{d \to \infty} \frac{1}{d} \theta_r^\top \theta_s &= \check \Omega_T\ss{r,s}\\
        \lim_{d \to \infty} \frac{1}{d} \theta_r^\top \tilde \theta_s &= \check \Xi\ss{r,s}\\
        \lim_{n \to \infty} \frac{1}{n} \hat h_r^\top \hat h_s &= \check \Sigma_T\ss{r,s},
    \end{align}
    where $(\check W_1, \dots, \check
    W_T) \sim \cN(0, \check \Sigma_T \otimes \Id_d)$ from Definition
    \ref{def:frozenDdState} and $Y_j \sim \PP_j$.

    Moreover, if $\bar \phi(h,y,a,j): \RR^L \times \RR \times \RR^{L'} \times [J] \to
    \RR$ is a Lipschitz function in $h$ uniformly over $y,a,j$, then
    the following limit also holds almost surely, 
    \[\lim_{n,d \to \infty}\frac{1}{n} \sum_{i=1}^n \bar \phi(h_{t,i}, y_i, \bar a_t, j(i)) =
    \sum_{j \in [J]} p_j \EE[\bar \phi(m_{j,t}
    + \check Z_t, Y_j, \bar a_t, j)],\]
    where $j(i)$ maps coordinate $i$ to its block $B_j$ with $i \in B_j$ and $(\check Z_1, \dots, \check Z_T) \sim \cN(0,\check \Omega_T)$ from Definition \ref{def:frozenDdState}.
\end{corollary}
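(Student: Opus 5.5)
The plan is to derive Corollary \ref{cor:appSe} from Lemma \ref{lem:appSE}, applied to the AMP iteration \eqref{eq:frozenAmpSimp}, and then undo the change of variables \eqref{eq:reparam}. The first task is to match Definition \ref{def:frozenAmpState} with Definition \ref{def:frozenDdState}. I will show by induction on $t$ that $\Omega^z_t = \alpha^{-1}\check\Omega_t$ and $\Sigma^w_t[r,s] = \alpha\check\Sigma_t[r,s]$, up to the block normalisation $1/n$ versus $1/d$.

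For the $\Sigma$ identity, the rows of $Z_r$ are i.i.d. $\cN(0,\Omega^z_t[r,r])$. A fraction $|B_j|/n\to p_j$ of the rows carry the shift $m_{j,r}$, and each $y_i$ is an independent draw from $\PP_{j(i)}$. The argument of $g$ is $\alpha^{1/2}Z_{r,i}+m_{j(i),r}$, so its Gaussian part has covariance $\alpha\Omega^z_t=\check\Omega_t$. The strong law of large numbers over $i$ then turns $\frac1n\EE[G(Z_r)^\top G(Z_s)]$ into $\sum_j p_j\EE[g g^\top]$, which is $\check\Sigma_t[r,s]$.

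For the $\Omega$ identity, I expand $F_r^\top F_s$ using the linear form \eqref{eq:GF}. This produces three kinds of terms:
\begin{itemize}
\item the pure Gaussian term $\alpha\,\EE[W_r^\top W_s]/d \to \alpha\Sigma^w$, which accounts for the $\alpha\check\Sigma$ contribution;
\item the mean terms $\alpha^2\sum_{j,k} l_{j,r}\,\mu_j^\top\mu_k\, l_{k,s}^\top/d \to \alpha^2\sum\chi_{j,k}\,l_{j,r}l_{k,s}^\top$;
\item the cross terms with $\theta_1$, which give $\alpha\sum_j m_{j,1}l_{j,s}^\top$.
\end{itemize}
The cross terms between $W$ and the deterministic vectors vanish in expectation. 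The unrolled sums $\sum_s\eta_0^{t-s}$ are reorganised into the recursive form of $\check\Xi_t$ and $\check\Omega_{t+1}$ by introducing $\check\Xi_t[r,s]$ as the limit of $\theta_r^\top\tilde\theta_s/d$ and checking that it satisfies the stated one-step recursion, with $\theta_{r+1}=\eta_0\theta_r-\eta_1\tilde\theta_r$. The invertibility hypotheses of Lemma \ref{lem:appSE} follow from Assumption \ref{as:phi} through this scaling.

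With the scaling in hand, I derive the overlap limits by choosing Lipschitz test functions in Lemma \ref{lem:appSE}. For $\frac1d\theta_r^\top\theta_s$, I take $\varphi_1,\varphi_2$ to be the coordinates of $F_{r-1},F_{s-1}$. These are uniformly Lipschitz in $w$ but carry the fixed vectors $\theta_1,\mu_j$ as coordinate-wise parameters, so I will use the version of the lemma that allows such side information, as in \cite{lovig2025}. For $\frac1n\hat h_r^\top\hat h_s$, I take $\phi_1,\phi_2$ to be the coordinates of $G$. The first claim, on $\mu_j^\top\theta_t/d$, follows from the second statement of the lemma with $\varphi_1=\mu_j$ (coordinate-wise) and $\varphi_2=F$, after replacing $\tilde\theta$ by $\alpha^{1/2}\check W+\alpha\sum\mu_jl^\top$. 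For the final statement, I set $\phi(z)_i=\bar\phi(\alpha^{1/2}z_{t,i}+m_{j(i),t},y_i,\bar a_t,j(i))$, which is Lipschitz in $z$ uniformly. The lemma replaces $z_t$ by $Z_t$, and averaging over the blocks together with the law of large numbers in $y_i$ gives $\sum_j p_j\EE[\bar\phi(m_{j,t}+\check Z_t,Y_j,\bar a_t,j)]$.

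I expect the main obstacle to be the bookkeeping that matches the unrolled AMP covariance $\EE[F_r^\top F_s]$ to the recursive $\check\Xi,\check\Omega$ system, including the first-row and first-column initialisation blocks. A secondary issue is justifying that products of Lipschitz functions with only polynomial growth (for example $\theta_r^\top\theta_s$) are covered. The lemma's product form $\phi=\phi_1\phi_2$ is designed exactly for this, so I will lean on it directly rather than truncate.
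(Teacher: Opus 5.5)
Your plan follows the paper's proof, in the same order:
\begin{itemize}
\item an induction showing that $\Omega^z_t,\Sigma^w_t$ of Definition~\ref{def:frozenAmpState} converge almost surely to a system that becomes Definition~\ref{def:frozenDdState} under $\check\Omega=\alpha\tilde\Omega^z$, $\check\Xi=\tilde\Xi$, $\check\Sigma=\tilde\Sigma^w$;
\item the matching of $\EE[F_r^\top F_s]$ to the recursion, done by splitting $F_r=\eta_0\cA_{r-1}-\eta_1\cB_r$ exactly as you describe;
\item Lemma~\ref{lem:appSE} applied with product test functions composed with the affine reparameterization \eqref{eq:reparam}.
\end{itemize}
Your explicit appeal to Assumption~\ref{as:phi} for the non-degeneracy hypothesis of Lemma~\ref{lem:appSE} is a point the paper leaves implicit.

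Two small corrections are needed.

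First, there is no factor $\alpha$ in the $\Sigma$ identification. It is $\Sigma^w_t\to\check\Sigma_t$, and only $\Omega$ rescales. Your own law-of-large-numbers computation and your $\alpha\,\EE[W_r^\top W_s]/d\to\alpha\Sigma^w$ term already use this.

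Second, the first display of the statement must concern $\tilde\theta_t$, not $\theta_t$. This is how the paper proves it, with $\varphi(\tilde\theta)_i=\mu_{j,i}\tilde\theta_{t,i}$, and how it is used in Section~\ref{sec:almostSureLims}. So take $\varphi_2$ to be the $\tilde\theta_t$ block rather than $F$. With $\varphi_2=F_{t-1}$ you would instead get the limit
\[
\eta_0^{t-1}m_{j,1}-\eta_1\alpha\sum_{s=1}^{t-1}\eta_0^{t-1-s}\sum_{k=1}^J\chi_{j,k}l_{k,s}.
\]
This is the unrolled $m_{j,t}$ recursion, not the printed right-hand side $\alpha\sum_k\chi_{j,k}l_{k,t}$.
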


\begin{proof}
    First, we prove the following inductive claim, for all $t \in [T]$, there
    exists almost sure limits $\tilde \Omega^z_t, \tilde \Sigma^w_t$ of
    $\Omega^z_t, \Sigma^w_t$ as $d \to \infty$ which are recursively defined by
    the following system initialized at $\tilde \Omega_1^z =
    \alpha^{-1}\bar \theta^2$ (with $\bar \theta^2$ from Assumption \ref{as:main0})
    where $Y_j \sim \PP_j$,
    \[\label{eq:tildeState}\begin{split}
        \tilde \Sigma^w_t\ss{r,s} &= \sum_{j=1}^J p_j \EE[g(\alpha^{1/2} \tilde Z_{r} + m_{j,r}, Y_j, \bar a_r) g(\alpha^{1/2} \tilde Z_{s} + m_{j,s}, Y_j, \bar a_s)^\top ], \quad r,s \in [t]\\
        &(\tilde Z_r, \tilde Z_s) \sim \cN\left(0, \begin{bmatrix}\tilde
        \Omega^z_t\ss{r,r} & \tilde \Omega^z_t\ss{r,s} \\ \tilde
        \Omega^z_t\ss{s,r} & \tilde \Omega^z_t\ss{s,s}\end{bmatrix}\right)\\
        \tilde \Xi_{t}\ss{r+1, s} &= \eta_{ 0} \tilde \Xi_{t-1}\ss{r,s} -
    \eta_{ 1} \Big(\alpha \tilde \Sigma^w_{t}\ss{r,s} +
    \alpha^2 \sum_{j,k=1}^J \chi_{j,k} l_{j,r} l_{k,s}^\top \Big), \quad r, s \in [t]\\
        \tilde \Xi_t\ss{1,s} &= \alpha \sum_{j=1}^J
    m_{j,1} l_{j,s}^\top, \quad s \in [t]\\
        \tilde \Omega^z_{t+1}\ss{r+1,s+1} &= \tilde \Omega^z_{t}\ss{r,s} \-
    \alpha^{-1} \tilde \Xi_{t}\ss{r,s} \- \alpha^{-1} \tilde \Xi_{t}\ss{s,r}^\top \+
    \alpha^{-1}\!\Big(\!\alpha \tilde \Sigma^w_{t}\ss{r,s} \+ \alpha^2\!\!
    \sum_{j,k=1}^J\!\chi_{j,k} l_{j,r} l_{k,s}^\top\!\Big),\\ 
    &\quad r,s \in [t]\\ 
    \tilde \Omega_{t+1}^z\ss{1, s+1} &= \eta_{ 0}
    \tilde \Omega_t^z\ss{1,s} - \alpha^{-1}\eta_{1} \tilde \Xi_t\ss{1,s} = \tilde
    \Omega_{t+1}\ss{s+1,1}^\top, \quad s \in [t]\\ 
    \tilde \Omega^z_{t+1}\ss{1,1} &= \alpha^{-1} \bar \theta^2.
    \end{split}\]

    We proceed by induction. For each $t \in [T]$, we first prove the almost sure limit of $\Omega^z_t$ and then the almost sure limit of $\Sigma^w_t$, deriving the limits of intermediate variables in Definition \ref{def:frozenAmpState} between these steps. The base case follows immediately as $\lim_{d \to
    \infty} \Omega^z_1 = \lim_{d \to \infty} \alpha^{-1} \frac{\theta_1^\top \theta_1}{d} = \alpha^{-1}
    \bar \theta^2$ by Assumption \ref{as:main0}. Then, with $Z_1 \sim
    \cN(0, \Omega^z_1 \otimes \Id_n)$, we write that 
    \begin{align}
    \Sigma^w_1 &= \frac{1}{n} \EE\left[g\left(\alpha^{1/2} Z_1 + \sum_{j=1}^J e_{B_j} m_{j,1}^\top, y, \bar a_1\right)^\top g\left(\alpha^{1/2} Z_1 + \sum_{j=1}^J e_{B_j} m_{j,1}^\top, y, \bar a_1\right)\right]\\
    &= \sum_{j=1}^J \frac{|B_j|}{n} \frac{1}{|B_j|} \sum_{i \in B_j} \EE[g(\alpha^{1/2} Z_{1,i}^\top +  m_{j(i),1}, y_i, \bar a_1) g(\alpha^{1/2} Z_{1,i}^\top + m_{j(i),1}, y, \bar a_1)^\top].
    \end{align}
    Then, applying the strong law of large numbers on the set of coordinates $i \in
    B_j$ for the random variables $y_i \sim \PP_j$, the continuous mapping
    theorem for the almost sure limit $\lim_{n \to \infty} |B_j|/n = p_j$, and the continuous mapping theorem applied to
    the variance $Z_{1,i}$ (i.e. $\Omega_1^z$ with almost sure limit $\tilde
    \Omega^z_1$) using that $g$ is a continuous function, gives that the almost sure limit of $\Sigma_1^w$ is
    \[\tilde \Sigma^w_1 = \sum_{j=1}^J p_j \EE[g(\alpha^{1/2} \tilde Z_{1} +
    m_{j,1}, Y_j, \bar a_1) g(\alpha^{1/2} \tilde Z_{1} + 
    m_{j,1}, Y_j, \bar a_1)^\top],\] where $\tilde Z_1 \sim \cN(0, \tilde
    \Omega^z_1)$.

    Now, we assume the inductive claim holds up to time $t-1$, i.e. the
    matrices $\tilde \Omega^z_{t-1}$ and $\tilde \Sigma^w_{t-1}$ are the almost
    sure limits of the matrices $\Omega^z_{t-1}$ and $\Sigma^w_{t-1}$ respectively.

    Recall the choice of functions $G,F_t$ from \eqref{eq:GF}, we can then
    rewrite $\Omega^z_{t}$, with $r,s \in [t-1]$, as
    {\footnotesize 
    \begin{align}
        \Omega^z_{t}\ss{r+1, s+1} &= \frac{\alpha^{-1}}{d} \EE\Bigg[\!\!\left(\!\eta_{0}^r \theta_1 \- \eta_{ 1}\! \sum_{p=1}^r \!\eta_{ 0}^{r-p}\! \left(\!\alpha^{1/2} W_p \+ \alpha\!\sum_{j=1}^J\!\mu_j l_{j,p}^\top\!\right)\!\!\right)^{\!\!\!\top}\!\!\!\left(\!\eta_{ 0}^s \theta_1 \- \eta_{ 1}\!\sum_{q=1}^s\!\eta_{ 0}^{s-q} \!\left(\!\alpha^{1/2} W_q \+ \alpha\!\sum_{k=1}^J\!\mu_k l_{k,q}^\top\!\right)\!\!\right)\!\!\Bigg]\\
        &= \frac{\alpha^{-1}}{d} \EE\Bigg[\!\Bigg\{\!\eta_{ 0}\underbrace{\left(\!\eta_{ 0}^{r-1} \theta_1 \- \eta_{ 1}\! \sum_{p=1}^{r-1}\! \eta_{ 0}^{(r - 1)-p}\!\left(\!\alpha^{1/2} W_p \+ \alpha\!\sum_{j=1}^J\!\mu_j l_{j,p}^\top\! \right)\!\right)}_{\cA_{r-1}} \- \eta_{ 1}\!  \underbrace{\left(\!\alpha^{1/2} W_r \+  \alpha\!\sum_{j=1}^J\! \mu_j l_{j,r}^\top \right)}_{\cB_r}\!\! \Bigg\}^\top\\ 
        &\times \Bigg\{\!\eta_{0}\!\underbrace{\left(\!\eta_{ 0}^{s-1} \theta_1 \- \eta_{ 1}\! \sum_{q=1}^{s-1}\! \eta_{ 0}^{(s-1)-q}\! \left(\alpha^{1/2} W_q \+  \alpha\!\sum_{k=1}^J\! \mu_k l_{k,q}^\top\!\right)\!\right)}_{\cA_{s-1}} \- \eta_{ 1}\! \underbrace{\left(\!\alpha^{1/2} W_s \+  \alpha\! \sum_{k=1}^J\! \mu_k l_{k,s}^\top \!\right)}_{\cB_s}\!\Bigg\} \!\Bigg]\\
        &= \alpha^{-1}\frac{1}{d} \left(\EE[\eta_{ 0}^2 \cA_{r-1}^\top \cA_{s-1} - \eta_{ 0}\eta_{ 1}(\cA_{r-1}^\top \cB_s + \cB_r^\top \cA_{s-1}) + \eta_{ 1}^2 \cB_r^\top \cB_s] \right)
    \end{align}
    }
    where $(W_1, \dots, W_{t-1}) \sim \cN(0, \Sigma^w_{t-1} \otimes \Id_d)$.
    Moreover, we have the following simplifications, 
    \begin{align}
        \frac{\alpha^{-1}}{d}\EE[\cA_{r-1}^\top \cA_{s-1}] &= \Omega^z_{t-1}\ss{r,s},\tag{As $\cA_{r-1}$ is $F_{r-1}(W_1, \dots, W_r)$ and similarly for $\cA_{s-1}$}\\
        \frac{1}{d} \EE[\cA_{r-1}^\top \cB_s] &= \frac{1}{d} \EE\left[\left(\eta^{r-1}_{0} \theta_1 - \eta_{1} \sum_{p=1}^{r-1} \eta^{(r-1)-p}_{0} \cB_p \right)^\top \cB_s\right],\\
        \frac{1}{d} \EE[\cB_r^\top \cA_{s-1}] &= \frac{1}{d} \EE\left[\cB_r^\top \left(\eta^{s-1}_{0} \theta_1 - \eta_{1} \sum_{q=1}^{s-1} \eta^{(s-1)-q}_{0} \cB_q \right)\right],\\
        \frac{1}{d} \EE[\cB_r^\top \cB_s] &= \frac{1}{d}\!\!\left(\!\!\alpha \EE[W_r^\top W_s] \+ \alpha^{3/2}\! \sum_{k=1}^J \EE[W_r]^\top \mu_k l_{k,r}^\top \+ \alpha^{3/2}\! \sum_{j=1}^J l_{j,r} \mu_j^\top \EE[W_s] \+ \alpha^2\!\! \sum_{j,k=1}^J\!\! l_{j,r} \mu_j^\top \mu_k l_{k,s}^\top \!\right)\\
        &= \alpha \frac{\EE[W_r^\top W_s]}{d} + \alpha^2 \sum_{j,k=1}^J l_{j,r} \frac{\mu_j^\top \mu_k}{d} l_{k,s}^\top\\
        &= \alpha \Sigma^w_{t-1}\ss{r,s} + \alpha^2 \sum_{j,k=1}^J l_{j,r} \frac{\mu_j^\top \mu_k}{d} l_{k,s}^\top\\
    \end{align}
    as $\EE[W_r] = 0$. This in turn implies that,
    \[\label{eq:shortTildeOmega}\begin{split}
        \Omega^z_{t}\ss{r+1, s+1} &= \eta_{ 0}^2\Omega^z_{t-1}\ss{r,s} - \alpha^{-1}\eta_{ 0}\eta_{ 1}(\Xi^z_{t-1}\ss{r,s} + \Xi^z_{t-1}\ss{s,r}^\top)\\
        &\qquad + \alpha^{-1} \eta^2_{ 1}\left(\alpha \Sigma^w_{t-1}\ss{r,s} + \alpha^2 \sum_{j,k=1}^J l_{j,r} \frac{\mu_j^\top \mu_k}{d} l_{k,s}^\top \right),
    \end{split}\]
    where, with initialization $\Xi^z_{t} \ss{1,s} = \frac{1}{d} \theta_1^\top
    \EE[\cB_s]$ and $r \in [t-2], s \in [t-1]$, we recursively define,
    \begin{align}
        \Xi^z_{t-1} \ss{r+1,s} &= \frac{1}{d} \EE\left[\left( \eta_{0} \theta_1 - \eta_{1} \sum_{p=1}^{r} \eta^{r-p}_{\theta,0} \cB_p \right)^\top \cB_s\right]\\
        &= \frac{1}{d} \EE\left[\left(\eta_{ 0}\left(\eta_{0} \theta_1 - \eta_{1} \sum_{p=1}^{r-1} \eta^{(r-1)-p}_{\theta,0} \cB_p \right)- \eta_{1} \cB_r\right)^\top \cB_s\right]\\
        &= \eta_{0}\Xi^z_{t-2}\ss{r,s} - \eta_{ 1} \frac{1}{d} \EE\left[\cB_r^\top \cB_s\right]\\
        &= \eta_{0}\Xi^z_{t-2}\ss{r,s} - \eta_{1}\left(\alpha \Sigma_{t-1}^w\ss{r,s} +  \alpha^2 \sum_{j,k=1}^J l_{j,r} \frac{\mu_j^\top \mu_k}{d} l_{k,s}^\top \right).
    \end{align}

    By the inductive hypothesis, we have the covariance matrix $\Sigma^w_{t-1}$
    has the almost sure limit $\tilde \Sigma^w_{t-1}$ and the covariance matrix
    $\Omega^z_{t-1}$ has the almost sure limit $\tilde \Omega^z_{t-1}$. Thus, under equation \eqref{eq:shortTildeOmega}, we
    apply the strong law of large numbers on the coordinates $i \in B_j$ for the
    independent random variables $y_i \sim \PP_j$, the continuous mapping
    theorem for the almost sure limit $\lim_{n \to \infty} |B_j|/n = p_j$, limit $\lim_{d \to
    \infty} \frac{\mu_j^\top \mu_k}{d} = \chi_{j,k}$ from Assumption
    \ref{as:main0} \eqref{as:data} and limit $\lim_{d \to \infty} \frac{\theta_{1}^\top
    \mu_j}{d} = m_{j,1}$ from Assumption \ref{as:main0} \eqref{as:limits}
    to give that $\Omega^z_{t}$ has the following almost sure limit, with $r,s \in [t-1]$,
    \begin{align}
        \tilde \Omega^z_{t}\ss{r+1, s+1} &= \eta_{ 0}^2 \tilde \Omega^z_{t-1}\ss{r,s} - \alpha^{-1}\eta_{ 0}\eta_{ 1}(\tilde \Xi_{t-1}\ss{r,s} + \tilde \Xi_{t-1}\ss{s,r}^\top)\\
        &\qquad + \alpha^{-1} \eta^2_{ 1}\left(\alpha \tilde \Sigma^w_{t-1}\ss{r,s} + \alpha^2 \sum_{j,k=1}^J  \chi_{j,k} l_{j,r} l_{k,s}^\top \right),
    \end{align}
    where we recursively define, with $r \in [t-2]$ and $s \in [t-1]$,
    \[\tilde \Xi_{t-1}\ss{r+1, s} = \eta_{ 0} \tilde \Xi_{t-2}\ss{r,s} -
    \eta_{ 1} \left(\alpha \tilde \Sigma^w_{t-1}\ss{r,s} + 
    \alpha^2 \sum_{j,k=1}^J \chi_{j,k}  l_{j,r} l_{k,s}^\top \right),\]
    with $\tilde \Xi_{t-1}\ss{1,s} = \alpha \sum_{j=1}^J
    m_{j,1} l_{j,s}^\top$. 

    Moreover, we can write
    \begin{align}\Omega_t^z\ss{1,s+1} &= \frac{\alpha^{-1}}{d} \theta_1^\top \EE\Bigg[\eta_{0} \left(\eta_{ 0}^{s-1} \theta_1 - \eta_{ 1} \sum_{q=1}^{s-1} \eta_{ 0}^{(s-1)-q} \left(\alpha^{1/2} W_q + \alpha \sum_{k=1}^J \mu_k l_{k,q}^\top \right) \right)\\ 
        &\qquad- \eta_{ 1} \left(\alpha^{1/2} W_s + \alpha \sum_{k=1}^J \mu_k l_{k,s}^\top \right)\Bigg]\\
        &= \eta_{ 0} \Omega^z_{t-1}\ss{1,s} - \alpha^{-1} \eta_{ 1} \Xi^z _{t-1}\ss{1,s}
    \end{align} as $\EE[W_q] = 0$, which can be argued identically as above to have
    the almost sure limit, 
    \[\tilde \Omega_t^z\ss{1, s+1} = \eta_{ 0} \tilde \Omega_{t-1}^z\ss{1,s} -
    \alpha^{-1} \eta_{ 1} \tilde \Xi_{t-1}\ss{1,s}.\]
    As a final step, we can use the aforementioned argument on the
    convergence of $\Omega^z_1$ to give that $\Omega^z_t\ss{1,1}$ has the almost
    sure limit $\tilde \Omega^z_t\ss{1,1}= \tilde \Omega^z_1\ss{1,1}$.

    Then, we can write $\Sigma^w_t$ as,
    \begin{align}
        \Sigma^w_t\ss{r,s} &= \frac{1}{n} \EE\left[g\left(\alpha^{1/2} Z_r + \sum_{j=1}^J e_{B_j} m_{j,r}^\top, y, \bar a_r\right) g\left(\alpha^{1/2} Z_s + \sum_{j=1}^J e_{B_j} m_{j,s}^\top, y, \bar a_s\right)\right]\\
        &= \sum_{j=1}^J \frac{|B_j|}{n} \frac{1}{|B_j|} \sum_{i \in B_j} \EE[g(\alpha^{1/2} Z_{r,i} + m_{j(i),r}, y_i, \bar a_r)^\top g(\alpha^{1/2} Z_{s,i} + m_{j(i),s}, y_i, \bar a_s)^\top]
    \end{align}
    where $(Z_1, \dots, Z_t) \sim \cN(0, \Omega^z_t \otimes \Id_n)$.

    An application of the strong law of large numbers over the block $B_j$ where each
    $y_i \sim \PP_j$ independently and the continuous mapping theorem on the
    covariance of $Z_{r,i}$ and $Z_{s,i}$, using that $g$ is a continuous
    function, then gives that $\Sigma^w_t$ has the almost sure limit,
    \[\tilde \Sigma^w_t \ss{r,s} = \sum_{j=1}^J p_j \EE[g(\alpha^{1/2} \tilde Z_{r} + m_{j,r}, Y_j, \bar a_r) g(\alpha^{1/2} \tilde Z_{s} + m_{j,s}, Y_j, \bar a_s)^\top ],\]
    where $(\tilde Z_r, \tilde Z_s) \sim \cN\left(0, \begin{bmatrix}\tilde
    \Omega^z_t\ss{r,r} & \tilde \Omega^z_t\ss{r,s} \\ \tilde \Omega^z_t\ss{s,r}
    & \tilde \Omega^z_t\ss{s,s}\end{bmatrix}\right)$.
    This proves the inductive claim.

    Recalling system \eqref{eq:tildeState}, we can relate this system back to
    Definition \ref{def:frozenDdState} by considering the change of variables
    $\check Z_t = \alpha^{1/2} \tilde Z_t$, $\check \Omega_{t+1} = \alpha \tilde
    \Omega^z_{t+1}, \check \Xi_t = \tilde \Xi_{t}$ and $\check \Sigma_t = \tilde
    \Sigma^w_t$. Recall the reparameterization \eqref{eq:reparam}, where, for
    any two functions $\phi, \varphi$ given in Lemma \ref{lem:appSE}, we can
    write
    \[\frac{1}{n}\sum_{i=1}^n \phi(h_1, \dots, h_T)_i =  \frac{1}{n}\sum_{i=1}^n \phi\left(\alpha^{1/2} z_1 + \sum_{j=1}^J e_{B_j} m_{j,1}^\top, \dots, \alpha^{1/2} z_T + \sum_{j=1}^J e_{B_j} m_{j,T}^\top\right)_i,\]
    or 
    \[\frac{1}{d} \sum_{i=1}^d \varphi(\tilde \theta_1, \dots, \tilde \theta_T) =
    \frac{1}{d} \sum_{i=1}^d \varphi\left(\alpha^{1/2} w_1 +  \alpha
    \sum_{j=1}^J \mu_j l_{j,1}^\top, \dots, \alpha^{1/2} w_T +  \alpha
    \sum_{j=1}^J \mu_j l_{j,T}^\top\right)_i.\]

    Notice, if both $\phi_1, \phi_2$ and $\varphi_1, \varphi_2$ are $L$-Lipschitz in their first argument
    then the following functions are $L\alpha^{1/2}$ Lipschitz with
    respect to either $z_1, \dots, z_T$ or $w_1, \dots, w_T$, for both $i \in \{1,2\}$,
    \begin{align}
        (z_1, \dots, z_T) \mapsto \phi_i\left(\alpha^{1/2}z_1 + \sum_{j=1}^J e_{B_j} m_{j,1}^\top, \dots, \alpha^{1/2} z_T + \sum_{j=1}^J e_{B_j} m_{j,T}^\top\right)\\
        (w_1, \dots, w_T) \mapsto \varphi_i\left(\alpha^{1/2} w_1 + \alpha \sum_{j=1}^J \mu_j l_{j,1}^\top, \dots, \alpha^{1/2} w_T + \alpha \sum_{j=1}^J \mu_j l_{j,T}^\top\right).
    \end{align}

    Therefore, by Lemma \ref{lem:appSE} and the aforementioned change of variables between system \eqref{eq:tildeState} and Definition \ref{def:frozenDdState}, we have almost surely that 
    \[\lim_{n \to \infty} \frac{1}{n}\sum_{i=1}^n \phi(h_1, \dots, h_T)_i -
    \lim_{n \to \infty} \frac{1}{n}\sum_{i=1}^n
    \phi\left(\check Z_1 +  \sum_{j=1}^J e_{B_j} m_{j,1}^\top, \dots,
    \check Z_T +  \sum_{j=1}^J e_{B_j} m_{j,T}^\top\right)_i = 0,\label{eq:phiLim}\]
    or 
    \[\lim_{d \to \infty} \frac{1}{d} \sum_{i=1}^d \varphi(\tilde \theta_1, \dots,
    \tilde \theta_T)_i \- \lim_{d \to \infty} \frac{1}{d} \sum_{i=1}^d
    \varphi\!\left(\alpha^{1/2} \check W_1 \+  \alpha\! \sum_{j=1}^J\! \mu_j
    l_{j,1}^\top, \dots, \alpha^{1/2} W_T \+  \alpha \sum_{j=1}^J \mu_j
    l_{j,T}^\top\!\right)_{\!\!i} \= 0,\label{eq:varPhiLim}\] where $(\check Z_1, \dots, \check Z_T) \sim
    \cN(0, \check \Omega_T \otimes \Id_n)$ and $(\check W_1, \dots, \check W_T)
    \sim \cN(0, \check \Sigma_T \otimes \Id_d)$.

    Each of the limits in the statement of the lemma follow immediately from the
    following choice of $\phi$ and $\varphi$, using equations \eqref{eq:phiLim} or \eqref{eq:varPhiLim} and
    using the identicality of the row distribution of $\check Z_1, \dots, \check
    Z_T$ and $\check W_1, \dots, \check W_T$ (recalling that $\varphi(\cdot)_i =
    \varphi_{1}(\cdot)_i\varphi_{2}(\cdot)_i$ and similarly for $\phi$):
    \begin{align}
        \frac{1}{d} \mu_j^\top \tilde \theta_t,& \quad \varphi(\tilde \theta_1, \dots, \tilde \theta_T)_i = \mu_{j,i} \tilde \theta_{t,i}\\
        \frac{1}{d} \theta_r^\top \theta_s,& \quad \varphi(\tilde \theta_1, \dots, \tilde \theta_T)_i = \left(\eta_{ 0}^r \theta_1 - \eta_{1} \sum_{p=1}^r \eta_{ 0}^{r-p} \tilde \theta_{p,i}\right)\left(\eta_{ 0}^s \theta_1 - \eta_{1} \sum_{q=1}^s \eta_{ 0}^{s-q} \tilde \theta_{q,i}\right)\\
        \frac{1}{d} \theta_r^\top \tilde \theta_s,& \quad \varphi(\tilde \theta_1, \dots, \tilde \theta_T)_i = \left(\eta_{ 0}^r \theta_1 - \eta_{1} \sum_{p=1}^r \eta_{ 0}^{r-p} \tilde \theta_{p,i}\right)\tilde \theta_{s}\\
        \frac{1}{n}\hat h_r^\top \hat h_s,& \quad \phi(h_1, \dots, h_T)_i = g(h_{r,i}, y_i, \bar a_r)g(h_{s,i}, y_i, \bar a_s)\label{eq:needYConc}
    \end{align}
    and, for the first three limits, invoking the law of large numbers on row-wise independent laws of $\check W_1, \dots, \check W_t, \check Z_1, \dots, \check Z_t$ to relate back to Definition
    \ref{def:frozenDdState}. Note, \eqref{eq:needYConc}, and more generally the
    statement on $\frac{1}{n} \sum_{i=1}^n \bar\phi(h_{t,i}, y_i, \bar a_t,j) =
    \sum_{j = 1}^J \frac{|B_j|}{n} \sum_{i \in B_j} \bar \phi(h_{t},
    y, \bar a_t,j)_i$ (where $\bar \phi$ is applied row-wise) in the statement of this corollary follows by selecting $(\phi_1)_i = \bar
    \phi(\cdot, y_i, \bar a_t, j)$ (or $g(\cdot, y_i, \bar a_t)$)
    for each $i \in [n]$ and $\phi_2$ being the all ones vector (or $g(\cdot,
    y_i, \bar a_t)$) and using equation \eqref{eq:phiLim}. Then an application of the continuous mapping theorem for the almost sure limit $\lim_{n \to \infty} |B_j| / n$, the identicality of the row distribution of $\check Z_1, \dots, \check Z_T$, alongside the strong law of large numbers applied
    to the block $B_j$ where each $y_i \sim \PP_j$ independently proves the
    claim.
\end{proof}

We conclude this section by observing that Definition \ref{def:frozenDdState}, with the exception of the updates on the frozen state evolution parameters, is equivalent Definition \ref{def:ddState}.

\subsubsection{Convergence of Lower-order Recursions Of $m_{j,t}, l_{j,t}$ And $\bar a_t$}\label{sec:lip}

Let $\tilde m_{j,t}^\top = \frac{\mu_j^\top \theta_t}{d}$ and $\tilde l_{j,t}^\top = \frac{e_{B_j}^\top \hat h_t}{d}$, consider the following algorithm equivalent to \eqref{eq:appdd},
\[\begin{split}
    h_t &= \sum_{j=1}^J e_{B_j} \tilde m_{j,t}^\top + Z\theta_t + \eta_{ 1} \sum_{s=1}^{t-1} \eta_{ 0}^{(t-1)-s} \hat h_{s}\\
    \hat h_t &= g(h_t, y, a_t)\\
    \tilde \theta_t &= \sum_{j=1}^J \alpha \mu_j \tilde l_{j,t}^\top +  Z^\top \hat h_t - \alpha  \left(\frac{1}{n} \sum_{i=1}^n \nabla_h g(h_{t,i}, y_i, a_t) \right)\theta_t\\ 
    \theta_{t+1} &= \eta_{ 0} \theta_t - \eta_{1} \tilde \theta_t\\
    a_{t+1} &= \gamma_{0} a_t - \gamma_{1} \frac{1}{n} \sum_{i=1}^n f(h_{t,i}, y_i, a_t). 
\end{split}\]

We prove the following. 

\begin{lemma}\label{lem:lipAs} Assume that almost surely the following limits exist, $\lim_{d \to \infty} \tilde m_{j,t} = m_{j,t} \in \RR^L$, $\lim_{d \to \infty} \tilde l_{j,t} = l_{j,t} \in \RR^L$ and $\lim_{d \to \infty} a_{t} = \bar a_t$. If $\phi_1, \phi_2: \RR^{n \times (L \cdot T)} \times \RR^n \times \RR^{L' \cdot T} \to \RR^n$, $\varphi_1, \varphi_2: \RR^{d \times (L \cdot T)} \times \RR^{L' \cdot T} \to \RR^n$ are Lipschitz in their first argument (and third argument for $\phi_1, \phi_2$, both uniformly over the second argument $y \in \RR^n$), then we have that almost surely,
    \[\lim_{n \to \infty} \frac{1}{n} \sum_{i=1}^n \phi(h_1, \dots, h_T, y, a_1, \dots, a_T)_i - \frac{1}{n} \sum_{i=1}^n \phi(h^{\rm frozen}_1, \dots, h^{\rm frozen}_T, y, \bar a_1, \dots, \bar a_T)_i = 0\]
    and 
    \[\lim_{n \to \infty} \frac{1}{d} \sum_{i=1}^d \varphi(\tilde \theta_1, \dots, \tilde \theta_T, a_1, \dots, a_T)_i - \frac{1}{n} \sum_{i=1}^n \phi(\tilde \theta^{\rm frozen}_1, \dots, \tilde \theta^{\rm frozen}_T, \bar a_1, \dots, \bar a_T)_i = 0,\]
    where $\phi(\cdot)_i = \phi_1(\cdot)_i\phi_2(\cdot)_i$ and $\varphi(\cdot)_i = \varphi_1(\cdot)_i\varphi_2(\cdot)_i$.
\end{lemma}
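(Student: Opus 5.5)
We prove Lemma \ref{lem:lipAs} by a perturbation (stability) argument. The plan is to show by induction on $t \in [T]$ that the normalized Frobenius differences
\[
\delta^h_t = \frac{1}{\sqrt n}\|h_t - h^{\rm frozen}_t\|_{\Fro}, \quad \delta^\theta_t = \frac{1}{\sqrt d}\|\theta_t - \theta^{\rm frozen}_t\|_{\Fro}, \quad \delta^{\tilde\theta}_t = \frac{1}{\sqrt d}\|\tilde\theta_t - \tilde\theta^{\rm frozen}_t\|_{\Fro}
\]
all tend to zero almost surely. Both iterations are driven by the same $Z = \alpha^{1/2}\tilde Z$, the same $y$, and the same $\theta_1$. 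The only differences are the substitutions $\tilde m_{j,t}\leftrightarrow m_{j,t}$, $\tilde l_{j,t}\leftrightarrow l_{j,t}$ and $a_t\leftrightarrow \bar a_t$, and by hypothesis each of these differences is $o(1)$ almost surely. In the frozen algorithm \eqref{eq:frozenDd} the factor $n/d$ also appears as $\alpha$; the $o(1)$ discrepancy this introduces is absorbed the same way. Once the $\delta$'s vanish, the conclusion follows quickly. Since $\phi_1,\phi_2$ are Lipschitz, we write $\phi_1\phi_2-\phi_1'\phi_2' = \phi_1(\phi_2-\phi_2')+\phi_2'(\phi_1-\phi_1')$ and apply Cauchy--Schwarz. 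This bounds the difference of averages by $C(\sum_t\delta^h_t + \sum_t\|a_t-\bar a_t\|)$ times $n^{-1/2}\|\phi_i\|_{\Fro}$ norms. Those norms are bounded because Lipschitz functions grow at most linearly and the frozen iterates have bounded normalized norms by Corollary \ref{cor:appSe}. The $\varphi$ statement is handled identically.

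The induction runs as follows. The base case holds since $\theta_1=\theta_1^{\rm frozen}$, so $\delta^h_1 \le \sum_j \sqrt{|B_j|/n}\,\|\tilde m_{j,1}-m_{j,1}\| \to 0$. For the step, we need three bounds.
\begin{itemize}
\item First, $\delta^h_t \le \sum_j\|\tilde m_{j,t}-m_{j,t}\| + \|Z\|_{\op}\sqrt{d/n}\,\delta^\theta_t + \eta_1\sum_{s<t}|\eta_0|^{t-1-s}\delta^{\hat h}_s$.
\item Second, by the Lipschitz property of $g$ in $(h,a)$ (Assumption \ref{as:main0} \eqref{as:algo}), $\delta^{\hat h}_t \le C(\delta^h_t + \|a_t-\bar a_t\|)$.
\item Third, $\delta^{\tilde\theta}_t$ is bounded by three contributions: $\alpha\sum_j (\|\mu_j\|/\sqrt d)\|\tilde l_{j,t}-l_{j,t}\|$; $\|Z\|_{\op}\sqrt{n/d}\,\delta^{\hat h}_t$; and the Onsager difference.
\end{itemize}
The $\theta$ update is linear, so $\delta^\theta_{t+1}\le |\eta_0|\delta^\theta_t + |\eta_1|\delta^{\tilde\theta}_t$. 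We use throughout that $\|Z\|_{\op}$ is almost surely bounded (Bai--Yin) and that $\|\mu_j\|^2/d \to \chi_{j,j}$.

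We expect the Onsager term to be the main obstacle. Write $b_t = n^{-1}\sum_i\nabla_h g(h_{t,i},y_i,a_t)$ and let $b_t^{\rm frozen}$ be its frozen analogue. We split
\[
\|b_t\theta_t - b^{\rm frozen}_t\theta^{\rm frozen}_t\|/\sqrt d \le \|b_t\|_{\op}\,\delta^\theta_t + \|b_t - b^{\rm frozen}_t\|_{\op}\,\|\theta^{\rm frozen}_t\|_{\Fro}/\sqrt d.
\]
Here $\|b_t\|_{\op}$ is bounded since $\nabla_h g$ is bounded. To control $b_t - b^{\rm frozen}_t$ we use that $\nabla_h g$ is Lipschitz in $(h,a)$ uniformly in $y$, which gives $\|b_t-b_t^{\rm frozen}\|\le C(n^{-1}\sum_i\|h_{t,i}-h^{\rm frozen}_{t,i}\| + \|a_t-\bar a_t\|)\le C(\delta^h_t+\|a_t-\bar a_t\|)$ by Cauchy--Schwarz. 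Finally, $\|\theta^{\rm frozen}_t\|_{\Fro}/\sqrt d$ converges to $\operatorname{tr}(\check\Omega_T\ss{t,t})^{1/2}$ by Corollary \ref{cor:appSe}.

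Chaining these bounds gives $\delta^h_t,\delta^{\tilde\theta}_t,\delta^\theta_{t+1}\to0$ almost surely for each $t\le T$. Since $T$ is bounded, the constants stay finite, and the product-Lipschitz argument above concludes the proof.
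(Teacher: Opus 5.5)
Your proposal is correct and follows essentially the same route as the paper: induction on the normalized distances between the original and frozen iterates (signal, noise via bounded $\|Z\|_{\rm op}$, and Onsager terms), followed by the $\phi_1\phi_2$ product split with Cauchy--Schwarz. The only cosmetic difference is that you split the Onsager difference so that the norm you need to bound is that of $\theta_t^{\rm frozen}$, which Corollary \ref{cor:appSe} controls directly, whereas the paper bounds $\|\theta_t\|_2^2/d$ via the inductive hypothesis plus that corollary; this is marginally cleaner.
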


\begin{proof}
    We proceed by induction on $t \in [T]$ to prove the iterates of the original and frozen algorithm have a vanishing difference in two-norm almost surely as $n,d \to \infty$ with $n/d \to \alpha$. Specifically, we prove the following limits hold almost surely,
    \begin{align}
        \lim_{n \to \infty} \frac{1}{n} \|h_t - h_t^{\rm frozen}\|_2^2 = 0, \quad &\lim_{n \to \infty} \frac{1}{n} \|\hat h_t - \hat h_t^{\rm frozen}\|_2^2 = 0, \label{eq:ind_h}\\
        \lim_{d \to \infty} \frac{1}{d} \|\tilde \theta_t - \tilde \theta_t^{\rm frozen}\|_2^2 = 0, \quad &\lim_{d \to \infty} \frac{1}{d} \|\theta_{t+1} - \theta_{t+1}^{\rm frozen}\|_2^2 = 0 \label{eq:ind_th}.
    \end{align}

    \textbf{Base Case ($t=1$):} 
    By initialization, $\theta_1 = \theta_1^{\rm frozen}$. Evaluating the pre-activations gives,
    \begin{align}
        h_1 - h_1^{\rm frozen} &= \left(\sum_{j=1}^J e_{B_j} \tilde m_{j,1}^\top + Z\theta_1 \right) - \left(\sum_{j=1}^J e_{B_j} m_{j,1}^\top + Z\theta_1^{\rm frozen} \right) \\
        &= \sum_{j=1}^J e_{B_j} (\tilde m_{j,1} - m_{j,1})^\top.
    \end{align}
    Since $\lim_{d \to \infty} \tilde m_{j,1} = m_{j,1}$ almost surely, and that $\lim_{n \to \infty}\|e_{B_j}\|_2^2 / n = p_j \in [0,1]$ from Assumption \ref{as:main0} \eqref{as:data} (with $J$ being $n$-independent), we have $\lim_{n \to \infty} \frac{1}{n}\|h_1 - h_1^{\rm frozen}\|_2^2 = 0$ almost surely. 
    Thus, as the function $g$ is Lipschitz with respect to input $h_1, a_1$ (for some bounded constant $C>0$), we have that
    \begin{align}
        \frac{1}{n}\|\hat h_1 - \hat h_1^{\rm frozen}\|^2_2 &= \frac{1}{n}\|g(h_1, y, a_1) - g(h_1^{\rm frozen}, y, \bar a_1)\|^2_2\\
        &\leq C\left(\frac{1}{n}\|h_1 - h_1^{\rm frozen}\|^2_2 + \|a_1 - \bar a_1\|_2^2\right).
    \end{align}
    Therefore, as an immediate consequence of $\lim_{n \to \infty} \frac{1}{n}\|h_1 - h_1^{\rm frozen}\|_2^2 = 0$ and $\lim_{d \to \infty}a_1 = \bar a_1$, we have $\lim_{n \to \infty} \frac{1}{n}\|\hat h_1 - \hat h_1^{\rm frozen}\|_2^2 = 0$ almost surely.

    \textbf{Inductive Step:} 
    Assume equations \eqref{eq:ind_h} and \eqref{eq:ind_th} hold for all $s \le t$. We first consider the update for $\tilde \theta_t$:
    \begin{align}
        \tilde \theta_t - \tilde \theta_t^{\rm frozen} &= \alpha \sum_{j=1}^J \mu_j (\tilde l_{j,t} - l_{j,t})^\top + Z^\top (\hat h_t - \hat h_t^{\rm frozen}) \\
        &\quad - \alpha \left( \left\langle \nabla_h g(h_t, y, a_t) \right\rangle \theta_t - \left\langle \nabla_h g(h_t^{\rm frozen}, y, \bar a_t) \right\rangle \theta_t^{\rm frozen} \right),
    \end{align}
    where $\langle \cdot \rangle$ is a shorthand for the empirical average over $n$ coordinates. We bound the two-norm of each of these three terms separately:
    \begin{enumerate}
        \item \textbf{Signal Term:} Since $\lim_{d \to \infty} \tilde l_{j,t} = l_{j,t}$ and $\frac{1}{d}\|\mu_j\|_2^2 = O(1)$ by Assumption \ref{as:main0} \eqref{as:data}, we immediately have $ \lim_{d \to \infty} \frac{1}{d} \| \alpha \sum_{j=1}^J \mu_j (\tilde l_{j,t} - l_{j,t})^\top\|_2^2 = 0$ almost surely.
        \item \textbf{Noise Term:} Since $Z \in \RR^{n \times d}$ is a matrix with independent $\cN(0, 1/d)$ entries, the operator norm $\|Z\|_{\rm op}$ is almost surely bounded \cite{bai1993} as $n, d \to \infty$. Thus, almost surely,
        \[\lim_{d \to \infty} \frac{1}{d}\|Z^\top (\hat h_t - \hat h_t^{\rm frozen})\|_2^2 \leq \lim_{d \to \infty} \frac{\|Z\|_{\rm op}^2}{d} \|\hat h_t - \hat h_t^{\rm frozen}\|_2^2 = \lim_{d \to \infty} \alpha \|Z\|_{\rm op}^2 \frac{1}{n} \|\hat h_t - \hat h_t^{\rm frozen}\|_2^2 = 0, \] 
        where the final equality applied the inductive hypothesis on $\hat h_t$.
        \item \textbf{Onsager Term:} Adding and subtracting $\langle \nabla_h g(h_t^{\rm frozen}, y, \bar a_t) \rangle \theta_t$, we have by the triangle inequality and Cauchy-Schwarz,
\begin{align}
    &\frac{\alpha^2}{d} \left\| \langle \nabla_h g(h_t, y, a_t) \rangle \theta_t - \langle \nabla_h g(h_t^{\rm frozen}, y, \bar a_t) \rangle \theta_t^{\rm frozen} \right\|_2^2 \\
    &\le 2\alpha^2 \left\| \langle \nabla_h g(h_t, y, a_t) \rangle - \langle \nabla_h g(h_t^{\rm frozen}, y, \bar a_t) \rangle \right\|_{\rm op}^2 \frac{\|\theta_t\|_2^2}{d} \\
    &\quad + 2\alpha^2 \left\| \langle \nabla_h g(h_t^{\rm frozen}, y, \bar a_t) \rangle \right\|_{\rm op}^2 \frac{\|\theta_t - \theta_t^{\rm frozen}\|_2^2}{d}.
\end{align}
By Assumption \ref{as:main0} \eqref{as:algo}, $\nabla_h g$ is Lipschitz in both $h$ and $a$. Thus, the operator norm difference $\left\| \langle \nabla_h g(h_t, y, a_t) \rangle - \langle \nabla_h g(h_t^{\rm frozen}, y, \bar a_t) \rangle \right\|_{\rm op}^2$ vanishes almost surely by the inductive hypothesis on $h_t$ and the almost sure convergence of $\lim_{n \to \infty} a_t = \bar a_t$. Since $\frac{1}{d}\|\theta_t\|_2^2 = O(1)$ almost surely (by the inductive hypothesis and Corollary \ref{cor:appSe}), the first term vanishes. The second term vanishes because $\frac{1}{d}\|\theta_t - \theta_t^{\rm frozen}\|_2^2 \to 0$ by the inductive hypothesis, and the empirical average of $\nabla_h g$ is bounded.
    \end{enumerate}
     Therefore, $\frac{1}{d}\|\tilde \theta_t - \tilde \theta_t^{\rm frozen}\|_2^2 \to 0$.

    By the linear update $\theta_{t+1} = \eta_{0}\theta_t - \eta_{1}\tilde \theta_t$, we trivially obtain $\frac{1}{d}\|\theta_{t+1} - \theta_{t+1}^{\rm frozen}\|_2^2 \to 0$. 

    Similarly, evaluating the update for $h_{t+1}$,
    \begin{align}
        h_{t+1} - h_{t+1}^{\rm frozen} &= \sum_{j=1}^J e_{B_j}(\tilde m_{j,t+1} - m_{j,t+1})^\top + Z(\theta_{t+1} - \theta_{t+1}^{\rm frozen}) \\
        &\quad + \eta_{ 1} \sum_{s=1}^{t} \eta_{ 0}^{t-s} (\hat h_{s} - \hat h_{s}^{\rm frozen}).
    \end{align}
    Applying the assumed limits for $\tilde m_{j, t+1}$, the bounded operator norm of $Z$, and the inductive hypothesis on the past $\hat h_s$ iterates, we find $\lim_{n \to \infty}\frac{1}{n}\|h_{t+1} - h_{t+1}^{\rm frozen}\|_2^2 = 0$ almost surely. 

    \textbf{Convergence of Test Functions:}
    We now prove that almost sure convergence of the iterates implies the almost sure convergence of the test functions. Consider $\phi(\cdot)_i = \phi_1(\cdot)_i \phi_2(\cdot)_i$ where $\phi_1, \phi_2: \RR^{n \times (L \cdot T)} \times \RR^n \times \RR^{L' \cdot T} \to \RR^n$ are Lipschitz functions satisfying the statement of the Lemma. For notational simplicity, we abbreviate $\phi(h_1, \dots, h_T, y, a_1, \dots, a_T)$ as $\phi(h)$ and .
    $\phi(h^{\rm frozen}_1, \dots, h^{\rm frozen}_T, y,\bar a_1, \dots, \bar a_T)$ as $\phi(h^{\rm frozen})$.
    
    We then have that,
    \begin{align}
        &\frac{1}{n} \left|\sum_{i=1}^n (\phi(h)_i - \phi(h^{\rm frozen})_i)\right| = \frac{1}{n}\left|\phi_1(h)^\top \phi_2(h) - \phi_1(h^{\rm frozen})^\top\phi_2(h^{\rm frozen})\right| \\
        &\quad\le  \frac{1}{n}\left(\left|\phi_1(h)^\top \phi_2(h) - \phi_1(h)^\top\phi_2(h^{\rm frozen})\right| + \left|\phi_1(h)^\top \phi_2(h^{\rm frozen}) - \phi_1(h^{\rm frozen})^\top\phi_2(h^{\rm frozen})\right| \right)\label{eq:testAsZero}
    \end{align}
    By the Cauchy-Schwarz inequality, the first term is bounded by,
    \[
    \left( \frac{1}{n}\|\phi_1(h)\|^2_2\right)^{1/2} \left( \frac{1}{n} \|\phi_2(h) - \phi_2(h^{\rm frozen})\|_2^2 \right)^{1/2}.\]
    Because $\phi_1$ is Lipschitz (for some constant $C > 0$) in $h,a$ we have that
    \begin{align}
    \frac{1}{n} \|\phi_1(h)\|_2^2 &= \frac{1}{n} \|\phi_1(h) - \phi_1(h^{\rm frozen}) + \phi_1(h^{\rm frozen})\|_2^2\\ 
    &\leq \frac{2}{n} \|\phi_1(h) - \phi_1(h^{\rm frozen})\|_2^2 + \frac{2}{n} \|\phi_1(h^{\rm frozen})\|_2^2\\
    &\leq 2 C\left( \frac{1}{n} \|h - h^{\rm frozen}\|^2_F + \sum_{t=1}^T \|a_t - \bar a_t\|_2^2 \right) + \frac{2}{n} \|\phi_1(h^{\rm frozen})\|_2^2
    \end{align}
    and using the inductive hypothesis, the assumed almost sure convergence of $a_t$ and Corollary \ref{cor:appSe} on $\frac{2}{n} \|\phi_1(h^{\rm frozen})\|_2^2$ implies that 
    it is almost surely bounded by a constant. Moreover, using that $\phi_2$ is Lipschitz (for some bounded Lipschitz constant $C > 0$) in both argument $h_1, \dots, h_T$ and $a_1, \dots, a_T$, we have that
    \begin{align}
        \frac{1}{n} \|\phi_2(h) - \phi_2(h^{\rm frozen})\|_2^2 &= \frac{1}{n} \|\phi_2(h,y,a_1, \dots, a_T) - \phi_2(h^{\rm frozen},y, \bar a_1, \dots, \bar a_T)\|_2^2\\
        &\leq C\left( \frac{1}{n} \|h - h^{\rm frozen}\|^2_F + \sum_{t=1}^T \|a_t - \bar a_t\|_2^2 \right).
    \end{align}
    As we have previously shown that $(\frac{1}{n}\|h - h^{\rm frozen}\|_F^2)$ converges to $0$ almost surely, then the first term of \eqref{eq:testAsZero} converges to zero almost surely. Applying the symmetric argument to the second term in \eqref{eq:testAsZero} gives the desired almost sure limit for $\phi$.

    An identical argument gives the exact same result for $\varphi(\tilde \theta_1, \dots, \tilde \theta_T, a_1, \dots, a_T)$. This completes the proof.
\end{proof}

\subsubsection{The Almost Sure Limits $m_{j,t}$, $l_{j,t}$ And $\bar
a_t$}\label{sec:almostSureLims}

To avoid confusion with the function $\phi$ used in the appendix, we denote the test function in Lemma~\ref{lem:ddState} as $\bar \phi$.
    
Combining Corollary \ref{cor:appSe} and Lemma \ref{lem:lipAs}, the proof of
Lemma \ref{lem:ddState} follows after observing that Assumption \ref{as:phi} can
allow $\bar \phi$ to be utilized in both Corollary \ref{cor:appSe} and Lemma \ref{lem:lipAs} with
the identification of $\phi_1 = \bar \phi$ (applied row-wise) and $\phi_2$ being the all ones
vector. As Definition \ref{def:frozenAmpState} and Definition \ref{def:ddState} are identical up to the frozen state evolution parameters, and have proven we can replace $\tilde m_{j,t}, \tilde l_{j,t}$ and $a_t$ by the frozen sequence of their respective almost sure limits, we are done once we identify the recursions for the almost sure
limits of $\tilde m_{j,t}, \tilde l_{j,t}$ and $a_t$. 

For the case of $\tilde m_{j,t}$, we have that, 
\[\tilde m_{j,t+1} = \frac{\theta_{t+1}^\top \mu_j}{d} = \eta_{ 0} \tilde m_{j,t} -
\eta_{1} \frac{\tilde \theta_t^\top \mu_j}{d},\]
recursively plugging in the almost sure limit of $\frac{1}{d} \mu_j^\top \tilde
\theta_t$ from Corollary \ref{cor:appSe} and that $\EE[\check W_t] = 0$, we have that for each $s \in [T]$, 
\[\lim_{d \to \infty} \frac{\tilde \theta_s^\top \mu_j}{d} = \lim_{d \to
\infty} \alpha \sum_{k=1}^J \frac{\mu_k^\top \mu_j}{d} l_{k,s} = 
\alpha \sum_{k=1}^J \chi_{j,k} l_{k,s},\] by the assumed limit of
$\frac{\mu_j^\top \mu_k}{d} = \frac{\mu_k^\top \mu_j}{d}$ in Assumption
\ref{as:main0} \eqref{as:data}. Thus, we have that almost surely that $\lim_{d
\to \infty} \tilde m_{j,t} = m_{j,t}$ defined by the recursion 
\[m_{j,t+1} = \eta_{0} m_{j,t} - \eta_{1} \sum_{k=1}^J  \alpha \chi_{j,k} l_{k,t}.\]

Similarly, using that $\tilde l_{j,t} = \frac{e_{B_j}^\top \hat
h_t}{d} = \frac{|B_j|}{n} \frac{1}{|B_j|} \sum_{i \in B_j} \hat
h_{t,i} = \frac{|B_j|}{n} \frac{1}{|B_j|} \sum_{i \in B_j}
g(h_{t,i}, y_i, a_t),$ we have the almost sure limit,
\[l_{j,t} = p_j\EE[g(Z_t + m_{j,t}, Y_j, \bar a_t)],\]
where $Z_t \sim \cN(0,\check \Omega_t\ss{t,t})$.

As a final note, using that $a_{t+1} = \gamma_{0} a_t - \gamma_{1}
\sum_{j=1}^J \frac{|B_j|}{n} \frac{1}{|B_j|} \sum_{i\in B_j} f(h_{t,i}, y_i,
a_t)$, and thus by Corollary \ref{cor:appSe} with $\phi_1 = f$ and
$\phi_2$ being the all ones vector (recalling from Assumption \ref{as:main0}
\eqref{as:algo} that $f$ is Lipschitz in its first argument), we have the
following almost sure limit,
\[\bar a_{t+1} = \gamma_{0} \bar a_t - \gamma_{1} \sum_{j=1}^J p_j
\EE[f(G^t+ m_{j,t}, Y_j, \bar a_t)],\]
noting that $\bar a_{1} = a_1$ and we assumed in Assumption \ref{as:main0} \eqref{as:limits} that $\|a_1\|_2$ is bounded and that $f$ is bounded from Assumption \ref{as:main0} \eqref{as:algo}, meaning that each limit has $\|\bar a_t\|_2$ bounded for $t \in [T]$.

Combining these almost sure limits with the state evolution recursion from
Definition \ref{def:frozenDdState}, we obtain the state evolution in Definition
\ref{def:ddState} and thus conclude Lemma \ref{lem:ddState}.

\subsection{The Proof Of Theorem \ref{thm:ddLoss}}\label{sec:proofThm}
\begin{proof}[Proof Of Theorem \ref{thm:ddLoss}]
Under Assumption \ref{as:main0}, we have that Assumption \ref{as:phi} holds for
the choice of $\phi(h, y, a) = \cL(\cM_{a}(h), y)$. Invoking Lemma
\ref{lem:ddState} gives the following almost surely,
\[\lim_{n \to \infty}\frac{1}{n} \sum_{i=1}^n \cL(\cM_{a_t}(h_{t,i}), y_i) =
\sum_{j=1}^J p_{j} \EE[\cL(\cM_{\bar a_t}(m_{j,t} + G^t), Y_j)], \quad G^t \sim \cN(0, \Omega_t\ss{t,t}), \quad Y_j \sim \PP_j,\]
it is then immediate that $\lim_{n \to \infty}\frac{1}{n} \sum_{i=1}^n \cL(\cM_{a_t}(h_{t,i}), y_i) = \testfunc(m_{1,t}, \dots, m_{J,t}, \Omega_t\ss{t,t}, \bar a_t)$ from Definition \ref{def:testErr}, invoking Proposition \ref{prop:testErr} concludes the proof.
\end{proof}

\subsection{The State Evolution Of Pure DD}\label{sec:pureDdSe}

Using Definition \ref{def:ddState}, we can specify the state evolution for this algorithm. We abbreviate $\nabla_h \Psi_{r,j} = \nabla_h \Psi(m_{j,r} + G^r, Y_j, \bar a_r)$ where $G^1, \dots, G^t \sim \cN(0, \Omega_t)$, $Y_j \sim \PP_j$ and define $V_{r,s} = \alpha \Sigma_{t}\ss{r,s} + \alpha^2 \sum_{j,k=1}^J \chi_{j,k} p_j p_k \EE[\nabla_h \Psi_{r,j}]\EE[\nabla_h \Psi_{s,k}]^\top$. With identical initializations from Definition \ref{def:ddState} and indices $r,s \in [t]$, we recursively define,
\[\label{eq:pureDdState}\begin{split}
\Sigma_t\ss{r,s} &= \sum_{j=1}^J p_{j} \EE[\nabla_h \Psi_{r,j} \nabla_h \Psi_{s,j}^\top]\\
 \Xi_{t}\ss{r+1, s} &= \Xi_{t-1}\ss{r,s} - \eta V_{r,s}\\
 \Omega_{t+1}\ss{r+1,s+1} &= \Omega_t\ss{r,s} - \eta (\Xi_t\ss{r,s} + \Xi_t\ss{s,r}^\top) + \eta^2 V_{r,s} \\
m_{j,t+1} &= m_{j,t} - \eta \alpha \sum_{k=1}^J \chi_{j,k} p_{k} \EE[\nabla_{h}\Psi_{t,k}]\\
 \bar a_{t+1} &= \bar a_t - \eta \sum_{j=1}^J p_{j} \EE[\nabla_{a}\Psi(m_{j,t} + G^t, Y_j, \bar a_t)].
\end{split}\]

\subsection{Taylor Expansion Of (Pure) DD}\label{sec:design}

\begin{lemma}\label{lem:orderStein} Let $\mu \in \RR^L$, $\Sigma \in \RR^{L
    \times L}$ and $c \in \RR$. Then for $X \sim \cN(c\mu, \Sigma)$, the
    following identities hold,
    \begin{align}
        \nabla_{\mu} \EE[f(X)] &= c\EE[\nabla_x f(x)|_{x = X}]\\
        \nabla_{\Sigma} \EE[f(X)] &= \frac{1}{2}\EE[\nabla^2_{x} f(x)|_{x = X}]\\
        \nabla^2_{\mu} \EE[f(X)] &= c^2\EE[\nabla^2_{x} f(x)|_{x = X}]\\
        \nabla^2_{\Sigma} \EE[f(X)] &= \frac{1}{4}\EE[\nabla^4_{x} f(x)|_{x = X}]\\
        \nabla_\mu \nabla_{\Sigma} \EE[f(X)] &= \frac{c}{2}\EE[\nabla^3_{x} f(x)|_{x = X}].
    \end{align}
\end{lemma}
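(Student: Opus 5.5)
The plan is to reduce every identity to one computation: differentiate the Gaussian density under the integral, then move the derivatives onto $f$ by integration by parts (Stein's lemma). We work under the smoothness and integrability implicit in how the lemma is used: $f \in C^4$, with derivatives of moderate growth so that dominated convergence justifies differentiating under $\EE$. We also assume $\Sigma \succ 0$, or else treat the degenerate case by approximating with $\Sigma + \epsilon \Id$ and letting $\epsilon \to 0$.

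\textbf{The mean derivatives.} Write $X = c\mu + \Sigma^{1/2} G$ with $G \sim \cN(0,\Id_L)$, so that $\EE[f(X)] = \EE[f(c\mu + \Sigma^{1/2}G)]$. Only the location depends on $\mu$, so differentiating under the expectation by the chain rule gives
\[
\nabla_\mu \EE[f(X)] = c\,\EE[\nabla_x f(X)],
\]
and differentiating once more gives $\nabla^2_\mu \EE[f(X)] = c^2\,\EE[\nabla_x^2 f(X)]$. These are the first and third identities. No Stein argument is needed here; only interchange of derivative and expectation.

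\textbf{The covariance derivatives.} Here we do not differentiate $\Sigma^{1/2}$, which is awkward. Instead we use the heat-equation structure of the Gaussian. Let $p_\Sigma(x) \propto \det(\Sigma)^{-1/2}\exp(-\tfrac12 (x-c\mu)^\top\Sigma^{-1}(x-c\mu))$ be the density. It satisfies $\partial_{\Sigma_{ab}} p_\Sigma = \tfrac12 \partial_{x_a}\partial_{x_b} p_\Sigma$, where $\Sigma$ is treated as having independent entries, matching the convention that makes the stated factor $\tfrac12$ correct. Equivalently, $\partial_{\mu_a} p = -c\,\partial_{x_a} p$.

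Integrating $f$ against $\partial_\Sigma p_\Sigma$ and integrating by parts twice, with boundary terms vanishing by Gaussian decay, yields the second identity $\nabla_\Sigma \EE[f(X)] = \tfrac12\EE[\nabla_x^2 f(X)]$. Applying this identity to the function $\tfrac12\nabla_x^2 f$ in place of $f$ gives
\[
\nabla^2_\Sigma \EE[f(X)] = \tfrac14 \EE[\nabla_x^4 f(X)],
\]
which is the fourth identity. For the mixed term, first apply the covariance identity and then the mean identity to $\tfrac12 \nabla_x^2 f$, giving $\nabla_\mu\nabla_\Sigma\EE[f(X)] = \tfrac{c}{2}\EE[\nabla_x^3 f(X)]$.

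\textbf{Main obstacle.} The only delicate step is the covariance derivative, in two respects. First, the symmetric-matrix convention for $\nabla_\Sigma$ must be fixed: treating $\Sigma$ as having free entries gives exactly $\tfrac12\nabla^2 f$, whereas symmetrization would change the off-diagonal factors. Second, interchanging $\nabla_\Sigma$ with the integral must be justified. For that we verify the heat identity directly by computing $\partial_{\Sigma_{ab}}$ of $\log p_\Sigma$:
\[
\partial_{\Sigma_{ab}} \log p_\Sigma = -\tfrac12(\Sigma^{-1})_{ab} + \tfrac12\big(\Sigma^{-1}(x-c\mu)(x-c\mu)^\top\Sigma^{-1}\big)_{ab}.
\]
This matches $\tfrac12 p^{-1}\partial_a\partial_b p$. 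We then bound the resulting integrands by a polynomial in $x$ times the Gaussian density, uniformly for $\Sigma$ in a neighborhood, so that dominated convergence applies.
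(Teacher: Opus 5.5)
Your argument is correct, and it does more than the paper does. The paper's ``proof'' is only a pointer to \cite[Equation (2)]{sklaviadis2026}, with no derivation and no stated hypotheses on $f$ or on what $\nabla_\Sigma$ means. Your route is the standard self-contained derivation: the chain rule through $X = c\mu + \Sigma^{1/2}G$ for the $\mu$-derivatives; the Gaussian heat identity $\partial_{\Sigma_{ab}} p_\Sigma = \tfrac12 \partial_{x_a}\partial_{x_b} p_\Sigma$ plus two integrations by parts for the first $\Sigma$-derivative; then iteration on $\tfrac12 \nabla_x^2 f$ for the fourth and fifth identities. It buys two things the citation hides. First, it makes the regularity explicit: $\Sigma \succ 0$, and $f \in C^4$ with polynomially bounded derivatives, so that boundary terms vanish and differentiation passes under the integral. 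Assumption \ref{as:main0} only bounds \emph{expectations} of the derivatives of $\Psi$, so this pointwise growth control is an extra hypothesis the paper relies on implicitly when it applies the lemma with $f = \Psi$. Second, it pins down the convention for $\nabla_\Sigma$. You only use the free-entry heat identity at symmetric $\Sigma$, so the iterated identities are statements about derivatives along symmetric perturbations, e.g. $D^2_\Sigma \EE[f(X)][\Delta_1, \Delta_2] = \tfrac14 \langle \EE[\nabla_x^4 f(X)], \Delta_1 \otimes \Delta_2 \rangle$ for symmetric $\Delta_1, \Delta_2$. That is exactly what the Taylor expansion in Theorem \ref{thm:ddLearn} needs (via Lemma \ref{lem:bigHessBound}), because the increment $\Delta\omega$ there is symmetric.
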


\begin{proof}
    See \cite[Equation (2)]{sklaviadis2026}.
\end{proof}

\begin{lemma}\label{lem:dominated}
    Suppose Assumption \ref{as:main0} holds. Let $\cT$ be the space of all
    $\theta = (\bar{a}, m_1, \dots, m_J, \omega)$ where $\|\bar{a}\|_2 \leq
    C$, $\|m_j\|_2 \leq C$, and $\|\omega^{-1}\|_{\op} \leq C$, $\|\omega\|_{\op} \leq C$ for some constant $C >
    0$. If $G \sim \cN(0, \Id_L)$ and $Y_j \sim \PP_j$, then the expectations of
    the derivatives of $\Psi(m_j + \omega^{1/2}G, Y_j, \bar{a})$ with
    respect to $\bar{a}$, $m_j$, and $\omega$ exist, and the expectation
    operator can be interchanged with the derivative operator.
\end{lemma}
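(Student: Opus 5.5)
The plan is to apply the Leibniz rule (differentiation under the integral sign) for each parameter block separately, with the dominating function built on a slightly enlarged compact neighbourhood of $\cT$. For the $(m_j,\omega)$ directions I will not differentiate $\Psi$ directly. Instead I write the expectation against the Gaussian density,
\[
\EE[\Psi(m_j + \omega^{1/2}G, Y_j, \bar a)] = \EE_{Y_j}\Big[\int_{\RR^L} \Psi(h, Y_j, \bar a)\, \varphi_{m_j,\omega}(h)\, dh\Big],
\]
where $\varphi_{m,\omega}$ is the $\cN(m,\omega)$ density. This puts all $(m_j,\omega)$-dependence in $\varphi_{m,\omega}$, which is $C^\infty$ in $(m,\omega)$ on the set $\|\omega^{-1}\|_{\op}\le C$.

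\textbf{Step 1: dominating function in $(m,\omega)$.} Any partial derivative of $\varphi_{m,\omega}$ in $(m,\omega)$ of order at most four has the form $P(h-m,\omega^{-1})\,\varphi_{m,\omega}(h)$, with $P$ a polynomial. On a neighbourhood $\cT'$ of $\cT$ (say with constant $2C$), $\|\omega\|_{\op}\le 2C$ and $\|\omega^{-1}\|_{\op}\le 2C$. Hence $|P\,\varphi_{m,\omega}(h)| \le c_1(1+\|h\|^k)e^{-c_2\|h\|^2}$ uniformly over $\cT'$, for constants $c_1,c_2>0$. By Assumption \ref{as:main0} \eqref{as:model}, $\Psi$ is Lipschitz in $(h,a)$ uniformly in $y$, so
\[
|\Psi(h,y,\bar a)| \le |\Psi(0,y,a_0)| + K(\|h\|+\|\bar a - a_0\|).
\]
The product of these two bounds is integrable in $h$. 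It is integrable in $Y_j$ as soon as $\EE|\Psi(0,Y_j,a_0)|<\infty$. I would obtain this last fact from the finiteness of the loss at a reference point together with the bounded moments of $\PP_j$ in Assumption \ref{as:main0} \eqref{as:data}; this is mild, but I would state it explicitly. Dominated convergence (via the mean value theorem on difference quotients) then gives existence of all $(m,\omega)$-derivatives up to order four and their interchange with $\EE$.

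\textbf{Step 2: return to derivatives of $\Psi$.} Integrating by parts in $h$, i.e.\ using Gaussian integration by parts as in Lemma \ref{lem:orderStein}, turns these into $\EE[\nabla_h^k\Psi]$. The boundary terms vanish because $\Psi$ grows at most linearly while the Gaussian factor decays. The $C^4$ regularity of $\Psi$ in $h$ makes the resulting integrands well defined, and their expectations are finite by Assumption \ref{as:main0} \eqref{as:model}(b).

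\textbf{Step 3: the $\bar a$ direction.} The Lipschitz property in $a$ gives $\|\nabla_a\Psi\|\le K$ pointwise, so first $\bar a$-derivatives and the mixed $\nabla_m\nabla_a$, $\nabla_\omega\nabla_a$ derivatives (again taken through the density) are dominated exactly as in Step 1. The main obstacle is second order in $\bar a$, including $\nabla_h^2\nabla_a$ after integration by parts. Assumption \ref{as:main0} \eqref{as:model}(b) controls only the \emph{expectations} $\EE[\nabla_a^2\Psi]$, not a pointwise envelope. My first attempt is to write
\[
\nabla_a\EE[\Psi](\bar a+\delta)-\nabla_a\EE[\Psi](\bar a)=\int_0^1 \EE[\nabla_a^2\Psi(\cdot,\bar a+s\delta)]\,\delta\, ds,
\]
justified by Fubini on $\nabla_a\Psi$, which is bounded. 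Then I would show that $s\mapsto\EE[\nabla_a^2\Psi(\cdot,\bar a+s\delta)]$ is continuous, using the uniform-in-$\cT'$ bound of Assumption \ref{as:main0} \eqref{as:model}(b) and continuity of $\nabla_a^2\Psi$ via a uniform integrability argument. If that fails, I would add the pointwise local domination of $\nabla_a^2\Psi$ as an explicit (mild) strengthening of the assumption.
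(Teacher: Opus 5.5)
Your proposal is correct for what the paper actually proves, but it takes a different route.

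\textbf{The paper's argument.} It is purely pathwise. It differentiates $\Psi(m_j+\omega^{1/2}G,Y_j,\bar a)$ by the chain rule and uses that Lipschitzness gives $\|\nabla_h\Psi\|_2,\|\nabla_a\Psi\|_2\le L$ uniformly in $y$. It bounds the $\omega$-derivative by $LC'\|G\|_2$, where $C'$ controls the derivative of $\omega\mapsto\omega^{1/2}$ on the spectrally bounded set. The single envelope $\max(L,LC'\|G\|_2)$ then justifies interchanging $\EE$ with \emph{first} derivatives in all three blocks at once. The Stein form $\tfrac12\EE[\nabla_h^2\Psi]$ of the $\omega$-gradient is imported separately from Lemma \ref{lem:orderStein}.

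\textbf{What your route buys.} Your score-function route puts the $(m,\omega)$-dependence in the Gaussian density. Domination then needs only linear growth of $\Psi$, you never differentiate the matrix square root, and you get derivatives of every order in $(m,\omega)$ at once. That higher-order interchange is something Lemma \ref{lem:bigHessBound} tacitly relies on and the paper never proves. After integration by parts you also get the Stein identities directly.

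\textbf{What it costs.}
\begin{itemize}
\item You need $\EE|\Psi(0,Y_j,a_0)|<\infty$. This does not follow from bounded moments of $\PP_j$, because Assumption \ref{as:main0} imposes no growth condition on $\Psi$ in $y$. For derivative terms you can avoid it by centering: replace $\Psi(h,\cdot)$ with $\Psi(h,\cdot)-\Psi(m,\cdot)$, since derivatives of $\varphi_{m,\omega}$ integrate to zero. Finiteness of $\EE[\Psi]$ itself is tacitly assumed by the paper's argument as well.
\item In Step 2, the boundary terms are handled by linear growth of $\Psi$ and boundedness of $\nabla_h\Psi$. That covers the first-order $m$ and $\omega$ derivatives. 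For the higher-order identities the boundary terms involve $\nabla_h^j\Psi$ with $j\ge 2$, which have no pointwise control.
\end{itemize}

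\textbf{On Step 3.} The obstacle there lies outside the lemma as the paper proves and explicitly invokes it, which is first-order interchange only. Your first sentence of Step 3 already handles the $\bar a$ part of that. If you do want second order in $\bar a$, drop the ``first attempt'': a bound on $\|\EE[\nabla_a^2\Psi]\|_{\op}$ controls neither $\EE\|\nabla_a^2\Psi\|$ nor its uniform integrability. Go directly to the explicit local-domination hypothesis.
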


\begin{proof}
    To interchange the expectation and derivative, it suffices to show that the partial derivatives of $\Psi(m_j + \omega^{1/2}G, Y_j, \bar{a})$ with respect to $\bar{a}$, $m_j$, and $\omega$ are uniformly bounded by an integrable function for all $\theta \in \cT$.
    
    Let $x = m_j + \omega^{1/2}G$. By Assumption \ref{as:main0} \eqref{as:model} (a), $\Psi$ is Lipschitz with respect to $h$ and $a$. Therefore, the gradients $\nabla_h \Psi$ and $\nabla_a \Psi$ exist almost everywhere and are uniformly bounded by constant $L > 0$.
    Thus, each of the following derivatives satisfy
    \begin{align}
        \|\nabla_{\bar{a}} \Psi\|_2 &= \|\nabla_a \Psi\|_2 \le L \\
        \|\nabla_{m_j} \Psi\|_2 &= \|\nabla_h \Psi\|_2 \le L \\
        \|\nabla_{\omega} \Psi\|_{\text{F}} &\le \|\nabla_h \Psi\|_2 \|\nabla_\omega (\omega^{1/2}G)\|_{\text{op}} \le L \cdot C' \|G\|_2,
    \end{align}
    where $C' > 0$ is a constant bounding the derivative of the matrix square root uniformly over the domain $\cT$; such a constant exists by the assumption that $\max(\|\omega^{-1}\|_{\op}, \|\omega\|_{\op})\leq C$.
    
    We construct the dominating function $H(G) = \max(L, L \cdot C' \|G\|_2)$. Because $G \sim \cN(0, \Id_L)$, the second moment $\|G\|_2$ is bounded. Thus, $H(G)$ is integrable and bounds the parameter derivatives independently of $\theta \in \cT$, allowing the interchange of expectation and differentiation.
\end{proof}

\begin{lemma}\label{lem:bigHessBound}
    Let Assumption \ref{as:main0} \eqref{as:model} hold for a bounded constant $C > 0$, 
    the operator norm of the block Hessian matrix $\nabla^2_{a, m, {\rm vec}(\omega)}\EE[\Psi(m + \omega^{1/2} G, Y_j, a)]$ is bounded by a constant $C' > 0$ depending only on $C$.
\end{lemma}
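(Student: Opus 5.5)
The plan is to bound the block Hessian entrywise by blocks. A symmetric block matrix has operator norm at most the sum of the operator norms of its blocks, and here the blocks are indexed by the variables $a$, $m$ and ${\rm vec}(\omega)$, with $L,L'$ bounded. So it suffices to bound each of the six distinct blocks ($aa$, $am$, $a\omega$, $mm$, $m\omega$, $\omega\omega$) by a constant depending only on $C$. Frobenius norms dominate operator norms, so Frobenius bounds on blocks are enough.

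The first step is to justify moving derivatives inside the expectation. By Lemma \ref{lem:dominated}, applied on the set $\cT$ determined by $C$, we may write every block as an expectation of derivatives of $\Psi$. Next we express the $m$- and $\omega$-derivatives through $h$-derivatives. For this we view $h=m+\omega^{1/2}G$ as $\cN(m,\omega)$ and apply Lemma \ref{lem:orderStein} with $c=1$, treating $a$ as a fixed parameter and $f=\Psi(\cdot,Y_j,a)$ (conditionally on $Y_j$, then averaging). This gives:
\begin{align}
\nabla_m^2\EE[\Psi] &= \EE[\nabla_h^2\Psi], \qquad \nabla_m\nabla_\omega\EE[\Psi]=\tfrac12\EE[\nabla_h^3\Psi], \qquad \nabla_\omega^2\EE[\Psi]=\tfrac14\EE[\nabla_h^4\Psi].
\end{align}
For the mixed blocks with $a$, we differentiate under the integral in $a$ first, which is allowed since $\Psi$ is $C^2$ in $a$. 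Applying the same Stein identities to the function $h\mapsto\nabla_a\Psi(h,Y_j,a)$ then yields
\begin{align}
\nabla_a\nabla_m\EE[\Psi]=\EE[\nabla_h\nabla_a\Psi], \qquad \nabla_a\nabla_\omega\EE[\Psi]=\tfrac12\EE[\nabla_h^2\nabla_a\Psi].
\end{align}
The remaining block is $\nabla_a^2\EE[\Psi]=\EE[\nabla_a^2\Psi]$.

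Each of these six expectations is exactly one of the quantities controlled in Assumption \ref{as:main0} \eqref{as:model}(b). The first three are bounded in operator norm and the last three in Frobenius norm, all by $C$, whenever $\|m\|_2,\|a\|_2$ and $\|\omega\|_{\op}$ are bounded. Passing from a tensor Frobenius bound to the operator norm of the matricized (vec) block costs nothing, since the Frobenius norm is invariant under reshaping. Summing the six block bounds (counting the off-diagonal ones twice) gives $C'=C(3+\tfrac12\cdot 2\cdot 2+\tfrac14)\le 6C$, or any similar constant depending only on $C$.

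The main obstacle is the $\omega$-derivatives, where $\omega$ ranges over symmetric matrices. The identity $\nabla_\Sigma\EE f=\tfrac12\EE\nabla^2 f$ from Lemma \ref{lem:orderStein} holds with the convention that $\omega$ is differentiated as an unconstrained matrix. We must check that the ${\rm vec}(\omega)$ parametrization in the lemma uses the same convention; otherwise the symmetrization introduces at most a factor of $2$, which only changes $C'$. A second subtlety is that Stein's identity needs enough integrability of $\nabla_h^k\Psi$ under the Gaussian. The $C^4$ smoothness together with the bounded expectations in Assumption \ref{as:main0} \eqref{as:model}, plus the domination argument of Lemma \ref{lem:dominated} extended to higher derivatives, is what we would invoke there.
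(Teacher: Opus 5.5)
Your proposal is correct and is essentially the paper's proof. The paper expands the Hessian into the $3\times 3$ block matrix and uses Lemma~\ref{lem:orderStein} to identify each block with one of $\EE[\nabla_a^2\Psi]$, $\EE[\nabla_h\nabla_a\Psi]$, $\EE[\nabla_h^2\Psi]$, $\tfrac12\EE[\nabla_h^2\nabla_a\Psi]$, $\tfrac12\EE[\nabla_h^3\Psi]$, $\tfrac14\EE[\nabla_h^4\Psi]$. It then bounds the operator norm by the sum of the blocks' operator (or Frobenius) norms via Assumption~\ref{as:main0}~\eqref{as:model}(b), exactly as you do.

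Your caveats about the symmetric-$\omega$ convention and about domination for higher derivatives are points the paper's proof passes over silently. There is one harmless arithmetic slip: the block sum is $(1+2+1+1+1+\tfrac14)C=\tfrac{25}{4}C$, not at most $6C$.
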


\begin{proof}
    We can expand the matrix $\nabla^2_{a, m, {\rm vec}(\omega)}\EE[\Psi]$ into the following $3 \times 3$ block matrix,{\small
       \[\hspace{-1cm}\begin{bmatrix}
    \nabla_{a}^2 \EE[\Psi(m \+ \omega^{1/2} G, y, a)] 
    & \nabla_{a} \nabla_m \EE[\Psi (m \+ \omega^{1/2} G, y, a)] 
    & \nabla_{a} \nabla_{{\rm vec}(\omega)} \EE[\Psi (m \+ \omega^{1/2} G, y,
    a)]\\ 
    \nabla_{m} \nabla_{a} \EE[\Psi (m \+ \omega^{1/2} G, y, a)] 
    &\nabla_{m}^2 \EE[\Psi (m \+ \omega^{1/2} G, y, a)] 
    & \nabla_m \nabla_{{\rm vec}(\omega)} \EE[\Psi (m \+ \omega^{1/2} G, y, a)]\\
    \nabla_{{\rm vec}(\omega)} \nabla_{a}\EE[\Psi (m \+
    \omega^{1/2} G, y, a)] & \nabla_{{\rm vec}(\omega)} \nabla_{m} \EE[\Psi (m
    \+ \omega^{1/2} G, y, a)] & \nabla_{{\rm vec}(\omega)}^2 \EE[\Psi (m \+
    \omega^{1/2} G, y, a)] \end{bmatrix}.\]
    }
    Applying the derivative identities from Lemma \ref{lem:orderStein}, each block in the above matrix becomes one of
    $\EE[\nabla_a^2 \Psi]$, 
    $\EE[\nabla_h \nabla_a \Psi]$, 
    $\EE[\nabla_h^2 \Psi]$, 
    $\frac{1}{2} \EE[\nabla_h^2 \nabla_a \Psi]$, 
    $\frac{1}{2} \EE[\nabla_h^3 \Psi]$, and 
    $\frac{1}{4} \EE[\nabla_h^4 \Psi]$, where we have suppressed the input for notational simplicity.
    
    Assumption \ref{as:main0} \eqref{as:model} guarantees the operator or Frobenius norm of each of these derivatives is bounded by $C$. Because the operator norm of any block matrix is bounded by the summed operator norms of its block matrices and the Frobenius norm strictly upper-bounds the operator norm, the operator norm of the entire block matrix is bounded by a finite linear combination of constant $C$, concluding the proof.
\end{proof}

\begin{theorem}\label{thm:ddLearn} 
Let $T\in \NN$ finite. Consider algorithm \eqref{eq:dd}; recall the state
    evolution variables (specifically $\Omega_t$, $m_{j,t}$ and $\bar a_t$) from
    Definition \ref{def:ddState} and $\testfunc_t = \testfunc(m_{1,t}, \dots, m_{J,t}, \Omega_t\ss{t,t}, \bar a_t)$ from Definition \ref{def:testErr}.

    Let $\epsilon = \max(|1 - \eta_{0}|, |1 - \gamma_{0}|, \eta_{1}, \gamma_{1})$. If Assumption \ref{as:main0} and Assumption \ref{as:phi} (with respect to $g,f$) hold for sufficiently small $\epsilon > 0$, then in the limit $\epsilon \to 0$,
    $\testfunc_t$ satisfies the following expansion with $G^t \sim \cN(0,
    \Omega_t\ss{t,t})$,
    \begin{align}
        \testfunc_{t+1} &= \testfunc_t\\
        %% a-damping
        &\quad - (1 - \gamma_{0}) \sum_{j=1}^J p_{j} \Big \langle
        \EE[\nabla_{a} \Psi(m_{j,t} + G^t, Y_j, \bar a_t)], \bar
        a_t \Big \rangle \label{eq:aDamp}\\
        %% a-descent
        &\quad - \gamma_{1} \sum_{j,j'=1}^J p_{j}p_{j'} \Big \langle
        \EE[\nabla_{a} \Psi(m_{j,t} + G^t, Y_j, \bar a_t)],
        \EE[f(m_{j',t} + G^t, Y_{j'}, \bar a_t)] \Big \rangle
        \label{eq:aDescent}\\
        %% Theta-damping
        &\quad - (1 - \eta_{0}) \sum_{j=1}^J p_{j} \Big
        \langle \EE[\nabla_{h}\Psi(m_{j,t} + G^t, Y_j, \bar a_t)],
        m_{j,t} \Big \rangle \label{eq:ThDamp}\\
        %% Theta-signal
        &\quad -\!\eta_{1} \alpha\!\!\!\!\sum_{j,j' = 1}^J\!\!\!
        p_{j}\chi_{j,j'} p_{j'} \Big \langle\!\EE[\nabla_{h}\!\Psi(
        m_{j,t} \+ G^t,Y_j, \bar a_t)], \EE[g(m_{j',t} \+ G^t, Y_{j'},
        \bar a_t)]\! \Big\rangle \label{eq:ThSignal}\\
        &\quad -\eta_0(1-\eta_0)\sum_{j=1}^J p_j \Big\langle \EE[\nabla_h^2
        \Psi(m_{j,t} + G^t, Y_j, \bar a_t)], \Omega_t\ss{t,t} \Big\rangle \label{eq:omDamp}\\
        &\quad - \frac{1}{2}\eta_{0}\eta_{1} \sum_{j=1}^J p_j
        \Big\langle \EE[\nabla_h^2 \Psi(m_{j,t} + G^t, Y_j, \bar a_t)],
        \Xi_t\ss{t,t} + \Xi_t\ss{t,t}^\top \Big\rangle \label{eq:omCross}\\
        &\quad + \delta_t, \label{eq:tayRemainder}
    \end{align}
    where $\sup_{t \in [T]} |\delta_t| \leq C \epsilon^2$ as $\epsilon \to 0$.
\end{theorem}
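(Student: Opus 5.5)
The plan is to treat $\testfunc_t$ as the value of one smooth deterministic function evaluated along the state-evolution trajectory, and to Taylor expand that function to first order. Concretely, I would define
\[
F(a, m_1, \dots, m_J, \omega) = \sum_{j=1}^J p_j \EE\big[\Psi(m_j + \omega^{1/2} G, Y_j, a)\big], \qquad G \sim \cN(0,\Id_L),
\]
so that $\testfunc_t = F(\bar a_t, m_{1,t}, \dots, m_{J,t}, \Omega_t\ss{t,t})$ by Definition \ref{def:testErr}. Then $\testfunc_{t+1} - \testfunc_t$ is the increment of $F$ between two consecutive points of the trajectory.

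The increments are read off from Definition \ref{def:ddState} and Remark \ref{rem:ddstate}:
\begin{align}
\Delta a &= -(1-\gamma_0)\bar a_t - \gamma_1 \sum_j p_j \EE[f(m_{j,t}+G^t, Y_j, \bar a_t)],\\
\Delta m_j &= -(1-\eta_0) m_{j,t} - \eta_1 \alpha \sum_{k} \chi_{j,k} l_{k,t},\\
\Delta \omega &= (\eta_0^2 - 1)\Omega_t\ss{t,t} - \eta_0\eta_1(\Xi_t\ss{t,t} + \Xi_t\ss{t,t}^\top) + \eta_1^2 V_t,
\end{align}
where $V_t = \alpha\Sigma_t\ss{t,t} + \alpha^2 \sum_{j,k} \chi_{j,k} l_{j,t} l_{k,t}^\top$. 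Each increment is $O(\epsilon)$, uniformly over $t \in [T]$, provided the state-evolution quantities stay bounded.

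\textbf{Step 1 (first-order terms).} The gradient of $F$ is computed with Lemma \ref{lem:orderStein}; the interchange of derivative and expectation is justified by Lemma \ref{lem:dominated}. This gives
\[
\nabla_a F = \sum_j p_j \EE[\nabla_a \Psi], \qquad \nabla_{m_j} F = p_j \EE[\nabla_h \Psi], \qquad \nabla_\omega F = \tfrac12 \sum_j p_j \EE[\nabla_h^2 \Psi].
\]
Pairing these gradients with the increments produces each listed term.
\begin{itemize}
\item The $a$-increment yields \eqref{eq:aDamp} and \eqref{eq:aDescent}.
\item The $m$-increment yields \eqref{eq:ThDamp}, and, after substituting $l_{k,t} = p_k \EE[g(\cdot)]$, also \eqref{eq:ThSignal}.
\item The $\Xi$ part of $\Delta\omega$ yields \eqref{eq:omCross} exactly.
\item The $\Omega$ part of $\Delta\omega$ yields $-\tfrac12(1-\eta_0)(1+\eta_0)\langle \EE\nabla_h^2\Psi, \Omega\rangle$. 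Since $\tfrac12(1+\eta_0) = 1 - \tfrac12(1-\eta_0)$ and $\eta_0 = 1 - (1-\eta_0)$, this differs from \eqref{eq:omDamp} by $O(\epsilon^2)$.
\item The $\eta_1^2 V_t$ term is $O(\epsilon^2)$.
\end{itemize}
All of these $O(\epsilon^2)$ discrepancies are absorbed into $\delta_t$.

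\textbf{Step 2 (remainder).} I would write the second-order remainder in mean-value form, as a Hessian evaluated at an intermediate point of the segment between consecutive states and paired with the increment twice. Lemma \ref{lem:bigHessBound} bounds the operator norm of that Hessian by a constant $C'$. Consequently $|\delta_t| \le C'\|\Delta\|^2 + O(\epsilon^2) = O(\epsilon^2)$.

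\textbf{Step 3 (staying in the compact set $\cT$).} The main obstacle is checking that the whole segment lies in the compact set $\cT$ of Lemma \ref{lem:dominated}, so that the constants are uniform in $t \le T$. This requires several bounds:
\begin{itemize}
\item $\|\bar a_t\|$ and $\|m_{j,t}\|$ are bounded, by finite $T$ together with the boundedness of $f$ and $g$ in Assumption \ref{as:main0}\eqref{as:algo}.
\item $\Sigma_t\ss{t,t}$, $\Xi_t\ss{t,t}$ and $\Omega_t\ss{t,t}$ are bounded in operator norm, and $\Omega_t\ss{t,t}^{-1}$ is bounded, by Assumption \ref{as:phi}.
\item Along the segment, $\Omega_t\ss{t,t} + s\Delta\omega$ stays within $O(\epsilon)$ of $\Omega_t\ss{t,t}$ in operator norm. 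Hence for $\epsilon$ small its eigenvalues remain bounded away from zero, which is what the bounded derivative of the matrix square root in Lemma \ref{lem:dominated} requires.
\end{itemize}
Once this is in place, the constant $C$ depends only on $T$ and on the constants in Assumptions \ref{as:main0} and \ref{as:phi}, which gives $\sup_{t\in[T]} |\delta_t| \le C\epsilon^2$.
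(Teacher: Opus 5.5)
Your proposal is correct and follows essentially the same route as the paper. You use the same function $\cG$, read the increments off Definition~\ref{def:ddState}, get the first-order terms from Lemmas~\ref{lem:orderStein} and~\ref{lem:dominated} with the $(1-\eta_0)^2$ and $\eta_1^2$ pieces absorbed into $\delta_t$, and bound the mean-value Hessian remainder with Lemma~\ref{lem:bigHessBound}; your check that the segment keeps $\omega$ uniformly invertible is a detail the paper glosses over. One slip: Assumption~\ref{as:phi} does not bound $\Xi_t\ss{t,t}$. The paper gets that bound by Cauchy--Schwarz on $\theta_t^\top\tilde\theta_t/d$ using the limits in Lemma~\ref{lem:ddState}, and unrolling the $\Xi$ recursion over finitely many steps with bounded $g$ works just as well.
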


\begin{remark}[Physical Interpretation Of The First Order Expansion]
    The generalization error expansion from Theorem \ref{thm:ddLearn} decomposes
    the local behavior of DD optimization into distinct physical
    terms. Connecting to works in spin glasses, one can interpret the
    order parameters $m_{j,t}$ and $\Omega_t\ss{t,t}$ as the magnetization (with
    respect to external field vectors $\mu_1, \dots, \mu_J$) and self-overlap
    of the parameter $\theta_t$ respectively. In addition, each of terms
    \eqref{eq:aDamp}-\eqref{eq:omCross} has the following physical
    interpretation:
    \begin{itemize}
        \item Equations \eqref{eq:aDamp} and \eqref{eq:ThDamp} correspond to
        weight decay due to the introduction of a non-zero damping factor of
        $1-\gamma_{0}$ and $1-\eta_{0}$ respectively.
        \item Equation \eqref{eq:aDescent} enforces the representation of the
        model $\cM_{a}$ to align the low-dimensional signal $a$ with the loss
        landscape through the use of function $f$.
        \item Equation \eqref{eq:ThSignal} drives the signal acquisition of each
        of the vectors $\mu_1, \dots, \mu_J$ by $\theta$ using the function $g$.
        Notice, the presence of $\chi_{j,j'}$ and $\alpha$ incorporates the
        overlap structure and data aspect ratio natively into the path of the
        optimization trajectory.
        \item Equations \eqref{eq:omDamp} and \eqref{eq:omCross} govern the variance inflation of the relative noise
        in the directions perpendicular to $\mu_1, \dots, \mu_J$. The first term
        represents the effect of damping. The second term, through unrolling the
        definition of $\Xi_t\ss{t,t}$, can be seen as cumulative correlation of
        the current gradient direction with the direction of past gradient
        evaluations. Each of these effects is
        compared to the relative scale of the expected Hessian of the
        post-activations corresponding to the iterate $\theta_t$, eliciting the intuition
        that flat regions of the loss landscape suffer from little inflation
        while steep regions are subject to strong inflation. 
    \end{itemize}
\end{remark}

\begin{proof}[Proof Of Theorem \ref{thm:ddLearn}]
    By Definition \ref{def:testErr}, the value of $\testfunc_{t+1}$ can be written as,
    \[\testfunc_{t+1} = \sum_{j=1}^J p_{j} \EE[\cL(\cM_{\bar
    a_{t+1}}(m_{j,t+1} + (\Omega_{t+1}\ss{t\+1, t\+1})^{1/2}G), Y_j)],\]
    where $G \sim \cN(0, \Id_L)$ and $Y_j \sim \PP_j$. For convenience, we abbreviate
    $\Omega_{s}\ss{s,s} = \omega_s$ for all $s \in [t+1]$. Using the recursive
    definitions of $m_{j,t+1}$, $\bar a_{t+1}$ from Definition
    \ref{def:ddState} and $\omega_{t+1}$ from Remark \ref{rem:ddstate}, we have that
    \begin{align}\testfunc_{t+1} &= \sum_{j=1}^J p_{j} \EE[\cL(\cM_{\bar a_t + \Delta
    \bar a}((m_{j,t} + \Delta m_j) + (\omega_t + \Delta
    \omega)^{1/2}G), Y_j)]\\
    &= \sum_{j=1}^J p_j \EE[\Psi((m_{j,t} + \Delta m_j) + (\omega_t + \Delta
    \omega)^{1/2}G, Y_j, \bar a_t + \Delta
    \bar a)],\end{align}
    where we define (recalling $l_{k,t}$ from Definition \ref{def:ddState})
    \begin{align}
    \Delta \bar a &= -(1 - \gamma_{0}) \bar a_t - \gamma_{1} A,\\
    \Delta m_j &= -(1 - \eta_{0}) m_{j,t} - \eta_{1} B_j,\\
    \Delta \omega &= -(1 - \eta_{0}^2) \omega_t - \eta_{0}
    \eta_{1}(\Xi_t\ss{t,t} + \Xi_t\ss{t,t}^\top) + \eta_{1}^2 \left(\alpha \Sigma_t\ss{t,t} + \alpha^2 \sum_{k,k'=1}^J \chi_{k,k'} l_{k,t} l_{k',t}^\top \right)\\
    A &= \sum_{j = 1}^J p_{j} \EE[f(m_{j,t} + \omega_t^{1/2} G, Y_j, \bar a_t)],\\
    B_{j} &= \alpha \sum_{k = 1}^J \chi_{j,k} p_{k} \EE[g(m_{k,t} + \omega_t^{1/2} G, Y_{k}, \bar a_t)]\\
    l_{k,t} &= p_j \EE[g(m_{k,t} + m_{k,t}, Y_k, \bar a_t)],
    \end{align}
    with $(G^1, \dots, G^t) \sim \cN(0, \Omega_t)$ implicit in the definition
    of $\Sigma_t$ and $\Xi_t$.

    By Assumption \ref{as:main0} \eqref{as:algo}, the functions $f,g$ are bounded, therefore each of $A, (B_j)_{j \in [J]}, (l_{j,t})_{j \in [J]}$ are element-wise bounded which further implies that $\|A\|_2, \|B_j\|_2, \|l_{j,t}\|_2$ are bounded (dependent on dimension $L$) for all $j \in [J]$. 
    
    Moreover, by Assumption \ref{as:phi} the operator norm of the matrices $\omega_t = \Omega_t\ss{t,t} = \Omega_T\ss{t,t}$ and $ \Sigma_t\ss{t,t} = \Sigma_T\ss{t,t}$ have bounded operator norm independent of $t$, and therefore the values of $\|\omega_t\|_F$, $\|\Sigma_t\ss{t,t}\|_F$ are bounded independent of $t \in [T]$. Invoking Lemma \ref{lem:ddState}, we have that $\lim_{d \to \infty} \frac{\theta_t^\top \tilde \theta_t}{d} = \Xi\ss{t,t}$, then using that 
    \[\left\|\frac{\theta_t^\top \tilde \theta_t}{d}\right\|_F \leq \left\|\frac{\theta_t}{\sqrt{d}}\right\|_F \left\|\frac{\tilde \theta_t}{\sqrt{d}}\right\|_F = \sqrt{\text{Tr}\left(\frac{\theta_t^\top \theta_t}{d}\right)} \sqrt{\text{Tr}\left(\frac{\tilde \theta_t^\top \tilde \theta_t}{d}\right)}\]
    and the limiting values of $\lim_{d \to \infty} \frac{\theta_t^\top \theta_t}{d} = \omega_t$ and $\lim_{d \to \infty} \frac{\tilde \theta_t^\top \tilde \theta_t}{d} = \alpha \Sigma_t\ss{t,t}$, we have that $\|\Xi_t\ss{t,t}\|_F$ is also bounded independent of $t$. Therefore, each term of $\Delta \bar a$, $\Delta m_j$ (for each $j \in [J]$) and $\Delta \omega$ is bounded by $C \epsilon$ (for some constant $C > 0$ when $\epsilon$ is sufficiently small) uniformly over $t \in [T]$.

    Now, define
    \[\cG(\bar a, m_1, \dots, m_J, \omega) = \sum_{j=1}^J p_{j}
    \EE[\Psi(m_{j} + \omega^{1/2} G, Y_j, \bar a)].\] By Taylor's
    theorem, there exists a point $\xi = (\xi_{\bar a}, \xi_{(m_{j})_{j \in
    [J]}}, \xi_{\omega})$ on the line connecting $(\bar a_t, (m_{j,t})_{j \in
    [J]}, \omega_t)$ to $(\bar a_{t+1}, (m_{j, t+1})_{j \in [J]},
    \omega_{t+1})$ such that,
    \[\label{eq:taylorG}\begin{split}
    \cG(\bar a_{t+1}, m_{1,t+1}, \dots, m_{J,t+1}, \omega_{t+1})
    &= \cG(\bar a_t, m_{1,t}, \dots, m_{J,t}, \omega_t)\\
    &\quad + \nabla \cG(\bar a_t, m_{1,t}, \dots, m_{J,t}, \vec(\omega_t))^\top
    \Big(\Delta \bar a, \Delta m_1, \dots, \Delta m_J, \vec(\Delta \omega )\Big) \\
    &\quad+ \frac{1}{2} \Big(\Delta \bar a, \Delta m_1, \dots, \Delta m_J,
    \vec(\Delta \omega) \Big)^\top \nabla^2 \cG(\xi)\\
    &\qquad \times \Big(\Delta \bar a, \Delta m_1, \dots, \Delta m_J, \vec(\Delta
    \omega) \Big),\end{split}\]
    where both the gradient and Hessian are with respect to $(\bar a, m_{1}, \dots, m_{J}, \vec(\omega))$.

    Notice the matrix $\nabla^2 \cG(\xi)$ is a finite linear combination of the Hessian matrices from Lemma \ref{lem:bigHessBound}. Recall from Definition \ref{def:ddState} that each state evolution parameter is a derived by a recursion of expectations over the functions $g,f$. Using the boundedness of these functions 
    and the boundedness of the initialization $\|a_1\|_2 = \|\bar a_1\|_2$, we can therefore conclude that 
    each of $\|\bar a_t\|_2$, $\|\bar a_{t+1}\|_2$, $\max_{j \in [J]}\|m_{j,t}\|_2$, $\max_{j \in [J]}\|m_{j,t+1}\|_2$, $\|\omega_t\|_{\op}$, $\|\omega_{t+1}\|_{\op}$ are bounded. Thus, the corresponding values for $\xi$ are also bounded due to being a linear interpolation of the above iterates. 
    
    Then, applying the triangle equality on the operator norm of $\nabla^2 \cG(\xi)$ to each $j$-term and using Assumption \ref{as:main0} \eqref{as:model} (b) to invoke Lemma \ref{lem:bigHessBound}, we have that $\|\nabla^2 \cG(\xi)\|_{\op} \leq C$ for some constant $C > 0$. Moreover, as we previously concluded that $(\Delta \bar a, \Delta m_1, \dots, \Delta m_J,
    \Delta \omega)$ is bounded by $C \epsilon$ for sufficiently small $\epsilon$ uniformly over $t$, each second order term in
    \eqref{eq:taylorG} is of order $\epsilon^2$ uniformly over $t \in [T]$.

    It remains to compute the first order Taylor expansion. Using Lemma \ref{lem:dominated} to interchange
    expectation and derivative, we can calculate,
    \begin{align}
        \nabla_{\bar a} \cG(\bar a, m_1, \dots, m_J, \omega) &= \sum_{j=1}^J
        p_{j} \EE\Big[\nabla_{\bar a} \Psi(m_{j} + \omega^{1/2}G, Y_j,
        \bar a)\Big],\label{eq:gradA}\\
        \nabla_{m_{j}} \cG(\bar a, m_1, \dots, m_J, \omega) &= p_{j}
        \EE\Big[\nabla_{h} \Psi(m_{j} + \omega^{1/2} G, Y_j, \bar
        a)\Big],\label{eq:gradM}\\
        \nabla_{\omega} \cG(\bar a, m_1, \dots, m_J, \omega)  &= \frac{1}{2}
        \sum_{j = 1}^J p_{j} \EE\Big[\nabla_h^2 \Psi(m_{j} +
        \omega^{1/2} G, Y_j, \bar a)\Big],\label{eq:gradOm}
    \end{align}
    where the latter two derivatives follow from Lemma \ref{lem:orderStein}.
    Using \eqref{eq:gradA}, the first order contribution from $\Delta \bar a$ gives,
    \begin{align}
    \langle \nabla_{\bar a} \cG, \Delta \bar a \rangle &= -(1 - \gamma_{0}) \sum_{j=1}^J p_j \Big\langle \EE[\nabla_{a}\Psi(m_{j,t} +
    G^t, Y_j, \bar a_t)], \bar a_t \Big\rangle\\
    &\quad - \gamma_{1} \sum_{j, k = 1}^J p_j p_{k} \Big\langle
    \EE[\nabla_{a}\Psi( m_{j,t} + G^t, Y_j, \bar a_t)], \EE[f(
    m_{k,t} + G^t, Y_{k}, \bar a_t)] \Big\rangle.
    \end{align}
    This produces terms \eqref{eq:aDamp} and \eqref{eq:aDescent}.
    Using \eqref{eq:gradM}, the first order contribution from $\Delta m_j$ gives,
    \begin{align}
    \sum_{j=1}^J \langle \nabla_{m_j}\cG, \Delta m_j \rangle &= -(1 -
    \eta_{0}) \sum_{j=1}^J p_j \Big\langle
    \EE[\nabla_h\Psi(m_{j,t} + G^t, Y_j, \bar a_t)], m_{j,t}
    \Big\rangle\\
    &\quad - \eta_{1} \alpha \sum_{j, k=1}^J p_j \chi_{j,k} p_{k}
    \Big\langle \EE[\nabla_h \Psi( m_{j,t} + G^t, Y_j, \bar a_t)],
    \EE[g(m_{k,t} + G^t, Y_{k}, \bar a_t)] \Big\rangle.
    \end{align}
    This produces terms \eqref{eq:ThDamp} and \eqref{eq:ThSignal}.
    Using \eqref{eq:gradOm}, The first order contribution from $\Delta \omega$ gives,
    \begin{align}
    \langle \nabla_\omega \cG, \Delta \omega \rangle &= -\frac{1}{2}(1 -
    \eta_{0}^2) \sum_{j=1}^J p_j \Big\langle \EE[\nabla_h^2
    \Psi(m_{j,t} + G^t, Y_j, \bar a_t)], \omega_t \Big\rangle\\
    &\quad - \frac{1}{2}\eta_{0}\eta_{1} \sum_{j=1}^J p_j
    \Big\langle \EE[\nabla_h^2 \Psi(m_{j,t} + G^t, Y_j, \bar a_t)],
    \Xi_t\ss{t,t} + \Xi_t\ss{t,t}^\top \Big\rangle\\
    &\quad + \frac{1}{2}\eta_{1}^2 \sum_{j=1}^J p_j
    \Big\langle \EE[\nabla_h^2 \Psi(m_{j,t} + G^t, Y_j, \bar a_t)],
    \alpha \Sigma_t\ss{t,t} + \alpha^2 \sum_{k,k'=1}^J \chi_{k,k'} l_{k,t} l_{k',t}^\top \Big\rangle\\
    &=-\eta_0(1-\eta_0)\sum_{j=1}^J p_j \Big\langle \EE[\nabla_h^2
    \Psi( m_{j,t} + G^t, Y_j, \bar a_t)], \omega_t \Big\rangle\\
    &\quad - \frac{1}{2}\eta_{0}\eta_{1} \sum_{j=1}^J p_j
    \Big\langle \EE[\nabla_h^2 \Psi(m_{j,t} + G^t, Y_j, \bar a_t)],
    \Xi_t\ss{t,t} + \Xi_t\ss{t,t}^\top \Big\rangle\\
    &\quad \+ \frac{1}{2}\eta_{1}^2 \sum_{j=1}^J p_j
    \Big\langle \EE[\nabla_h^2 \Psi(m_{j,t} \+ G^t, Y_j, \bar a_t)],
    \alpha \Sigma_t\ss{t,t} \+  \alpha^2\!\!\!\sum_{k,k'=1}^J\!\!\!\chi_{k,k'} l_{k,t} l_{k',t}^\top \Big\rangle\label{eq:eta1Sq}\\
    &\quad-\frac{1}{2}(1-\eta_0)^2 \sum_{j=1}^J p_j \Big\langle \EE[\nabla_h^2
    \Psi(m_{j,t} + G^t, Y_j, \bar a_t)], \omega_t \Big\rangle\label{eq:eta0Rewrite}\\
    &=-\eta_0(1-\eta_0)\sum_{j=1}^J p_j \Big\langle \EE[\nabla_h^2
    \Psi( m_{j,t} + G^t, Y_j, \bar a_t)], \omega_t \Big\rangle\\
    &\quad - \frac{1}{2}\eta_{0}\eta_{1} \sum_{j=1}^J p_j
    \Big\langle \EE[\nabla_h^2 \Psi( m_{j,t} + G^t, Y_j, \bar a_t)],
    \Xi_t\ss{t,t} + \Xi_t\ss{t,t}^\top \Big\rangle + \delta_t,
    \end{align}
    where $\sup_{t \in [T]} |\delta_t| \leq C \epsilon^2$ for some constant $C>0$ for sufficiently small $\epsilon$.
    The final inequality in the above display follows by rewriting \eqref{eq:eta0Rewrite} using $(1-\eta_0^2) = (1-\eta_0)^2 + 2\eta_0(1-\eta_0)$, and using the inequality $\langle M_1, M_2 \rangle \leq \|M_1\|_F \|M_2\|_F$ to absorb term \eqref{eq:eta1Sq} using the $\eta_1^2$ prefactor. This second argument is allowed by the assumed boundedness of $\|\EE[\nabla^2_h \Psi(m_{j,t} + G^t, Y_j, \bar a_t)]\|_{F}$ (Assumption \ref{as:main0} \eqref{as:model} (b)), the boundedness of $\|\omega_t\|_F$, $\|\Sigma_t\ss{t,t}\|_F$ independent of time $t\in [T]$ and recognizing that $\|l_{k,t} l_{k',t}^\top\|_F \leq \|l_{k,t}\|_F\|l_{k',t}\|_F$ is bounded as $l_{k,t}$ is an expectation over $g$, a bounded function.
    This produces terms \eqref{eq:omDamp} and \eqref{eq:omCross}.

    Combining all first order contributions and bounding the second order
    remainder by the $\epsilon^2$ order remainder $\delta_t$ (say by enlarging its bound by $2C \epsilon^2$) completes the proof.
\end{proof}

\begin{proof}[Proof of Theorem \ref{thm:pureTaylor}]
    First, we invoke Theorem \ref{thm:ddLearn} with the choice of $\eta_0, \gamma_0 = 1$, $\eta_1 = \gamma_1 = \eta$, $g = \nabla_h \Psi$, $f = \nabla_a \Psi$. This gives the Taylor expansion
    \begin{align}
    \testfunc_{t+1} &= \testfunc_t\\
    %% a-descent
    &\quad - \eta \sum_{j,j'=1}^J p_{j}p_{j'} \Big \langle
    \EE[\nabla_{a} \Psi(m_{j,t} + G^t, Y_j, \bar a_t)],
    \EE[\nabla_a \Psi(m_{j',t} + G^t, Y_{j'}, \bar a_t)] \Big \rangle \\
    %% Theta-signal
    &\quad - \eta \alpha \sum_{j,j' = 1}^J
    p_{j}\chi_{j,j'} p_{j'} \Big \langle \EE[\nabla_{h}\Psi(
    m_{j,t} + G^t,Y_j, \bar a_t)], \EE[\nabla_h \Psi( m_{j',t} + G^t, Y_{j'},
    \bar a_t)] \Big \rangle \\
    &\quad - \frac{\eta}{2} \sum_{j=1}^J p_j
    \Big\langle \EE[\nabla_h^2 \Psi(m_{j,t} + G^t, Y_j, \bar a_t)],
    \Xi_t\ss{t,t} + \Xi_t\ss{t,t}^\top \Big\rangle + \epsilon_t,
\end{align}
where $\sup_{t \in [T]} |\epsilon_t| \leq C \eta^2$ for constant $C > 0$ for sufficiently small $\eta > 0$.
Using the notation given in the statement of Theorem \ref{thm:pureTaylor} and simplifying the above equation gives the proof.
\end{proof}

Used as an important step in the proof of Theorem \ref{thm:ddLearn},  we present a high level observation that allows one control the norm of the matrix $\Xi\ss{t,t}$ in the Taylor expansion of Theorem \ref{thm:pureTaylor}.

\begin{remark}\label{rm:TaylorError}Observe, the first term on line \eqref{eq:TaylorError} need not be negative and may increase the test error. Moreover, unrolling $\Xi_t\ss{t,t}$ from Definition \ref{def:ddState}, this term is equal to,
    \[\eta \sum_{j=1}^J p_j \Big\langle \mathbb{E}[\nabla_h^2 \Psi(m_{j,t} + G^t, Y_j, \bar a_t)], \alpha \sum_{k=1}^J m_{k,1} l_{k,t}^\top - \eta \sum_{s=1}^{t-1} \Big( \alpha \Sigma_t\ss{s,t} + \alpha^2 \sum_{a,b=1}^J \chi_{a,b} l_{a,s} l_{b,t}^\top \Big) \Big\rangle.\]
    It is not clear the above summation is bounded independent of time $t$. Thankfully, the assumption that $\Sigma_t\ss{t,t}$ and $\Omega_t\ss{t,t}$ have bounded operator norms (and thus bounded Frobenius norms) from Assumption \ref{as:phi} allows for such control. Indeed, using the limits from Lemma \ref{lem:ddState} and 
    \[\left\|\frac{\theta_t^\top \tilde \theta_t}{d}\right\|_F \leq \left\|\frac{\theta_t}{\sqrt{d}}\right\|_F \left\|\frac{\tilde \theta_t}{\sqrt{d}}\right\|_F,\]
    we have that $\|\Xi_t\ss{t,t}\|_F$, and thus the term in \eqref{eq:TaylorError} by Assumption \ref{as:main0} \eqref{as:model}, is bounded in the $n,d$ limit.
\end{remark}

\subsection{The Benefits of Damping For Test Error Critical Points}\label{sec:fixedPoints}

Further tuning hyperparameters in \eqref{eq:dd} can give favorable global behavior as well.

\begin{theorem}\label{thm:global}
Let $g = \nabla_h \Psi$ and $f = \nabla_a \Psi$ in Algorithm \eqref{eq:dd}, with $\eta, \gamma \in (0,1)$, consider the one dimensional hyperparameter subspace over $c \in \RR_+$,
\[\eta_{0} = 1-\eta,\quad \eta_1 = c\eta,\quad \gamma_0 = 1,\quad \gamma_1 = \gamma,\label{eq:hpSubspace}.\]
Any fixed point $(m_1^*, \dots, m_J^*, \Omega^*, \bar a^*)$ of Definition \ref{def:ddState}, must satisfy the equations
\begin{align}
m_{j}^* &= - c \alpha \left(\sum_{k=1}^J \chi_{j,k} p_k \EE[g(m_{k}^* + G^*, Y_k, \bar a^*)]\right)\label{eq:fixM}\\
0 &= - \sum_{j=1}^J p_j \EE[f( m_{j}^* + G^*, Y_j, \bar a^*)]\label{eq:fixA},
\end{align}
where $Y_j \sim \PP_j$ and $G^* \sim \cN(0, \Omega^*)$. Moreover, for any deterministic vector $v \in \RR^d$ perpendicular to $(\mu_j)_{j \in [J]}$, any iterate $\theta$ produced by the above fixed point must satisfy $\lim_{d \to \infty} d^{-1}v^\top \theta = 0$.
\end{theorem}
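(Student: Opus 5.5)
Under the hyperparameter choice \eqref{eq:hpSubspace}, I would substitute the fixed-point condition directly into the low-dimensional recursions of Definition \ref{def:ddState}. The state evolution for $(m_{j,t})_{j\in[J]}$ and $\bar a_t$ depends on the past only through $(m_{j,t},\Omega_t\ss{t,t},\bar a_t)$, via $l_{k,t}=p_k\EE[g(m_{k,t}+G^t,Y_k,\bar a_t)]$ with $G^t\sim\cN(0,\Omega_t\ss{t,t})$. A fixed point therefore means $m_{j,t+1}=m_{j,t}=m_j^*$, $\bar a_{t+1}=\bar a_t=\bar a^*$ and $\Omega_{t+1}\ss{t+1,t+1}=\Omega_t\ss{t,t}=\Omega^*$.

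\textbf{Fixed-point equations.} Plugging into the $m$-recursion gives
\[m_j^* = (1-\eta)m_j^* - c\eta\,\alpha\sum_{k}\chi_{j,k}l_k^*.\]
Cancelling $m_j^*$ leaves $\eta m_j^* = -c\eta\alpha\sum_k\chi_{j,k}p_k\EE[g(m_k^*+G^*,Y_k,\bar a^*)]$, and dividing by $\eta>0$ yields \eqref{eq:fixM}. Likewise, $\gamma_0=1$ gives $\bar a^*=\bar a^*-\gamma\sum_j p_j\EE[f(\cdot)]$, and dividing by $\gamma>0$ yields \eqref{eq:fixA}. These two steps are routine algebra. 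I would also note that no condition on $\Omega^*$ is needed for the stated equations.

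\textbf{Orthogonality claim.} For fixed deterministic $v\perp(\mu_j)$, I would track $q_t=\lim_d d^{-1}v^\top\theta_t$. Writing $\tilde\theta_t=\alpha^{1/2}w_t+\alpha\sum_j\mu_j l_{j,t}^\top$ as in \eqref{eq:reparam}, the signal part vanishes against $v$. Then $d^{-1}v^\top\tilde\theta_t\to 0$ follows from Corollary \ref{cor:appSe} and Lemma \ref{lem:lipAs}, applied with the test function $\varphi(\cdot)_i=v_i\tilde\theta_{t,i}$ (a product of two Lipschitz maps, as allowed there). This uses that $\check W_t$ is centered Gaussian independent of the deterministic $v$, so the limit is $\EE[d^{-1}v^\top\check W_t]=0$. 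The recursion $\theta_{t+1}=(1-\eta)\theta_t-c\eta\tilde\theta_t$ then gives $q_{t+1}=(1-\eta)q_t$, hence $q_t=(1-\eta)^{t-1}q_1$.

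\textbf{Passing to the fixed point.} The strict damping $\eta_0=1-\eta<1$ is what forces the component along $v$ to contract geometrically. The step I expect to be delicate is making ``iterate produced by the fixed point'' precise, because Lemma \ref{lem:ddState} only covers bounded $T$. I would interpret the statement as follows: at a fixed point the state is stationary, so the transformation is time-invariant and can be applied for arbitrarily many steps, giving $q_t=(1-\eta)^{t-1}q_1\to0$. Equivalently, the overlap $q^*$ of a fixed-point iterate must satisfy $q^*=(1-\eta)q^*$, which forces $q^*=0$ since $\eta\in(0,1)$.

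The justification would go through the second formulation, that invariance of $q$ is required at a fixed point and is forced to zero by the contraction. This avoids any interchange of the limits $T\to\infty$ and $d\to\infty$; the interchange should be flagged as the obstacle if one instead insists on the limit along the iteration.
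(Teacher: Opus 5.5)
Your proposal is correct and follows essentially the paper's argument. You substitute the fixed point into the $m_{j,t}$ and $\bar a_t$ recursions of Definition \ref{def:ddState} and divide by $\eta,\gamma>0$; you then observe that the overlap with $v\perp(\mu_j)$ evolves as $q_{t+1}=(1-\eta)q_t$, so stationarity forces $q^*=0$. The only difference is cosmetic: the paper obtains this recursion by appending $v$ as a dummy mode $\mu_{J+1}=v$ with $p_{J+1}=0$ (so $\chi_{J+1,k}=0$ for $k\in[J]$ and $l_{J+1,t}=0$) and reading it off the existing $m$-recursion, whereas you re-derive it from Corollary \ref{cor:appSe} and Lemma \ref{lem:lipAs} with the test function $v_i\tilde\theta_{t,i}$.
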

\begin{proof}
Assume the fixed point given in the statement of the theorem.
Using the equations of $m_{j,t+1}$ and $\bar a_{t+1}$ for the given fixed point, we have that $(m_j^*)_{j \in [J]}$, $\Omega^*$, $\bar a^*$ satisfy the following system,
\begin{align}
m_{j}^* &= \eta_0 m_j^* - \eta_1 \alpha \sum_{k=1}^J \chi_{j,k} p_k \EE[g(m_{k}^* + G^*, Y_k, \bar a^*)]\\
\bar a^* &= \gamma_0\bar a^* - \gamma_1 \sum_{j=1}^J p_j \EE[f(m_{j}^* + G^*, Y_j, \bar a^*)],
\end{align}
where $Y_j \sim \PP_j$ and $G^* \sim \cN(0, \Omega^*)$. Rearranging these equations and simplifying gives the fixed point condition in the theorem. Note, by adding a dummy signal vector $\mu_{J+1} = v, p_{J+1} = 0$ where $v \perp \mu_j$ for $j \in [J]$ into distribution \eqref{eq:dist}, we can also conclude the second statement in the theorem, i.e. that $m_{J+1}^* = 0$, as $\chi_{J+1, k} = 0$ for all $k \in [J]$ and $\eta_0 \in (0,1)$. 
\end{proof}

Let $(\theta^*, a^*)$ be a critical point for the test error, assume that $H^* = \sum_{j=1}^J p_j \EE[\nabla^2_h
\Psi(\mu_j^\top \theta^* / d + Z^\top \theta^*, Y_j, a)]$ is invertible and isotropic when $Z \sim \cN(0, 1/d)$. Then, there exists a $c \in \RR_+$ where \eqref{eq:fixM} and \eqref{eq:fixA} are equivalent to projected critical point conditions on $(\theta^*, a^*)$.

A test error critical point satisfies $\nabla_{a} \EE_{\check x, \check y}\left[\cL(
\cM_{a^*}(\check x^\top \theta^*), \check y) \right] = 0$ and $\nabla_{\theta} \EE_{\check x, \check y}\left[\cL(
\cM_{a^*}(\check x^\top \theta), \check y)\right] = 0$ with $(\check x, \check y)$ drawn from distribution \eqref{eq:dist}. 
Expanding \eqref{eq:dist} in terms of the mixture on $\mu_1, \dots, \mu_J$, interchanging expectation and derivative by Lemma \ref{lem:dominated}, we equivalently write this system as,
\begin{align}
    \label{eq:aCritical}\sum_{j=1}^J p_j \EE[\nabla_{a} \Psi(\mu_j^\top \theta^* / d + Z^\top
    \theta^*, Y_j, a^*)] &= 0,\\
    \label{eq:thetaCritical} -\sum_{j=1}^J p_j \mu_j
    \EE[\nabla_h\Psi(\mu_j^\top \theta^* / d + Z^\top \theta^*, Y_j, a^*)] &= \theta H^*,
\end{align}
where $Z \sim \cN(0, \Id_d/d)$. As $H^*$ is invertible, for any deterministic vector $v\in \RR^d$, \eqref{eq:thetaCritical} implies that,
\[\frac{v^\top\theta^*}{d} = -\Big(\sum_{j=1}^J p_j \frac{v^\top \mu_j}{d}
\EE[\nabla_h \Psi(\mu_j^\top \theta^* / d + Z^\top \theta^*, Y_j, a))]
\Big)(H^*)^{-1}.\label{eq:criticalM}\]
There are two cases of interest: (1) If $v = \mu_j$ for some $j \in [J]$, assuming that $\lim_{d \to \infty} \mu_j^\top \theta^* / d = m_{j,\theta^*}$, $\lim_{d \to \infty} (\theta^*)^\top \theta^* /d = \Omega_{\theta^*}$, $\lim_{n,d \to \infty} a^* = \bar a_{\theta^*}$, and replacing $(H^*)^{-1} = (c^*)^{-1} \Id$ for some constant $c^* > 0$ by the isotropic assumption, asymptotically solving \eqref{eq:criticalM} is equivalent to,
\[m_{j,\theta} = - c^* \sum_{k=1}^J \chi_{j,k} p_k \EE[\nabla_h \Psi(
m_{k,\theta^*} + G, Y_j, \bar a_{\theta^*}))],\quad G \sim \cN(0,\Omega_{\theta^*})\label{eq:popProj};\]
(2) If $v$ is perpendicular to the span of $(\mu_j)_{j \in [J]}$, then assuming that $
\lim_{d \to \infty}\frac{v^\top \theta}{d} = m^{\perp}$ almost surely, we have the asymptotic equation
\[m^{\perp} = 0\label{eq:perpProj}.\]
Next, under the same limits $a, \mu_j^\top \theta / d$ and $\theta^\top \theta / d$, \eqref{eq:aCritical} is asymptotically equivalent to,
\[\sum_{j=1}^J p_j \EE[\nabla_{\bar a} \Psi(m_{j, \theta^*} + G, Y_j, \bar a_{\theta^*})] = 0,
\quad G \sim \cN\left(0, \Omega_{\theta^*} \right).\label{eq:aCriticalTwo}\]

Comparing \eqref{eq:popProj}, \eqref{eq:perpProj} and \eqref{eq:aCriticalTwo} with $m_{j, \theta^*} = m_j^*$ for each $j \in [J]$, $\Omega_{\theta^*} = \Omega^*$ and $\bar a_{\theta^*} = \bar a^*$, and selecting $c = (c^* \alpha)^{-1}$, the statement of Theorem \ref{thm:global} is identical to the set of equations \eqref{eq:aCritical} and \eqref{eq:popProj}. 

\begin{remark}[Adaptive Hessian estimation]
    Note, in the case where $H^*$ is not isotropic, one naturally would incorporate the Hessian $H_t$ into a DD algorithm, the data-based estimate of 
    \[\hat H_t = \frac{1}{n} \sum_{i=1}^n \nabla^2_h \Psi(h_{t,i}, y_i, a_t),\] is
    already calculated for the correction term in both pure DD and is expected to be an asymptotically exact estimate of $H_t$. But, this requires a time-inhomogeneous choice of $g = g_t$; such an extension to our results is possible but is left to future work.
\end{remark}

\section{Deferred Applications}

\subsection{Faithful Test Error Tracking In Signal-less Learning}\label{sec:signalless}

Recall this problem from the introduction, consider
$n$ \iid data points $x_i \sim \cN(0,\Id_d/d)$ and $y_i = 0$, generating the data $(X,y)$. Let  $\cM_\theta(x) = x^\top \theta$ with $\theta \in \RR^d$ initialized at $\theta_1 \sim \cN(0, \Id_d)$ and trained under loss $\cL(\hat y,y) = \frac{1}{2}(\hat y - y)^2$ (rescaled for convenience), realized by distribution \eqref{eq:dist} with $J = 1$ and $\mu_1$ being the zero vector.

We run the following algorithms:

\begin{center}
\begin{tabular}{c c}
    \textbf{GD} & \textbf{(Damped) Pure DD} \\[1ex]
    $\begin{aligned}
        h_t &= X\theta_t \\
        \tilde{\theta}_t &= X^\top h_t \\
        \theta_{t+1} &= \theta_t - \eta \tilde{\theta}_t
    \end{aligned}$
    &
    $\begin{aligned}
        h_t &= X \theta_t + \eta_1 \sum_{s=1}^{t-1} \eta_0^{(t-1)-s} \hat{h}_s \\
        \tilde{\theta}_t &= X^\top h_t - \alpha \theta_t \\
        \theta_{t+1} &= \eta_0\theta_t - \eta_1 \tilde{\theta}_t
    \end{aligned}$
\end{tabular}
\end{center}

Figure \ref{fig:noSignalReg} plots the train and test errors for GD and pure DD ($\eta_0 = 1, \eta_1 = \eta$). We immediately see the train-test disconnect in GD is corrected in both DD algorithms. Note, for pure DD, the train-test error explodes.
This is predicted from the variance inflation terms in $\Omega_t$ from Definition \ref{def:ddState} and occurs because this problem has no signal. Specifically, we can use Remark \ref{rem:ddstate} to conclude the
variance recursion (and thus the test error recursion) takes form,
\[\Omega_{t+1}\ss{t+1, t+1} = \eta_{0}^2 \Omega_t\ss{t,t} - \eta_{1}
\eta_{0} (\Xi_t\ss{t,t} + \Xi_t\ss{t,t}^\top) + \eta_{1}^2\alpha \Omega_t\ss{t,t},\]
which may or may not be a contraction depending on the choices of $\eta_0, \eta_1$. 

\begin{figure}
    \centering
    \includegraphics[width=.6\linewidth]{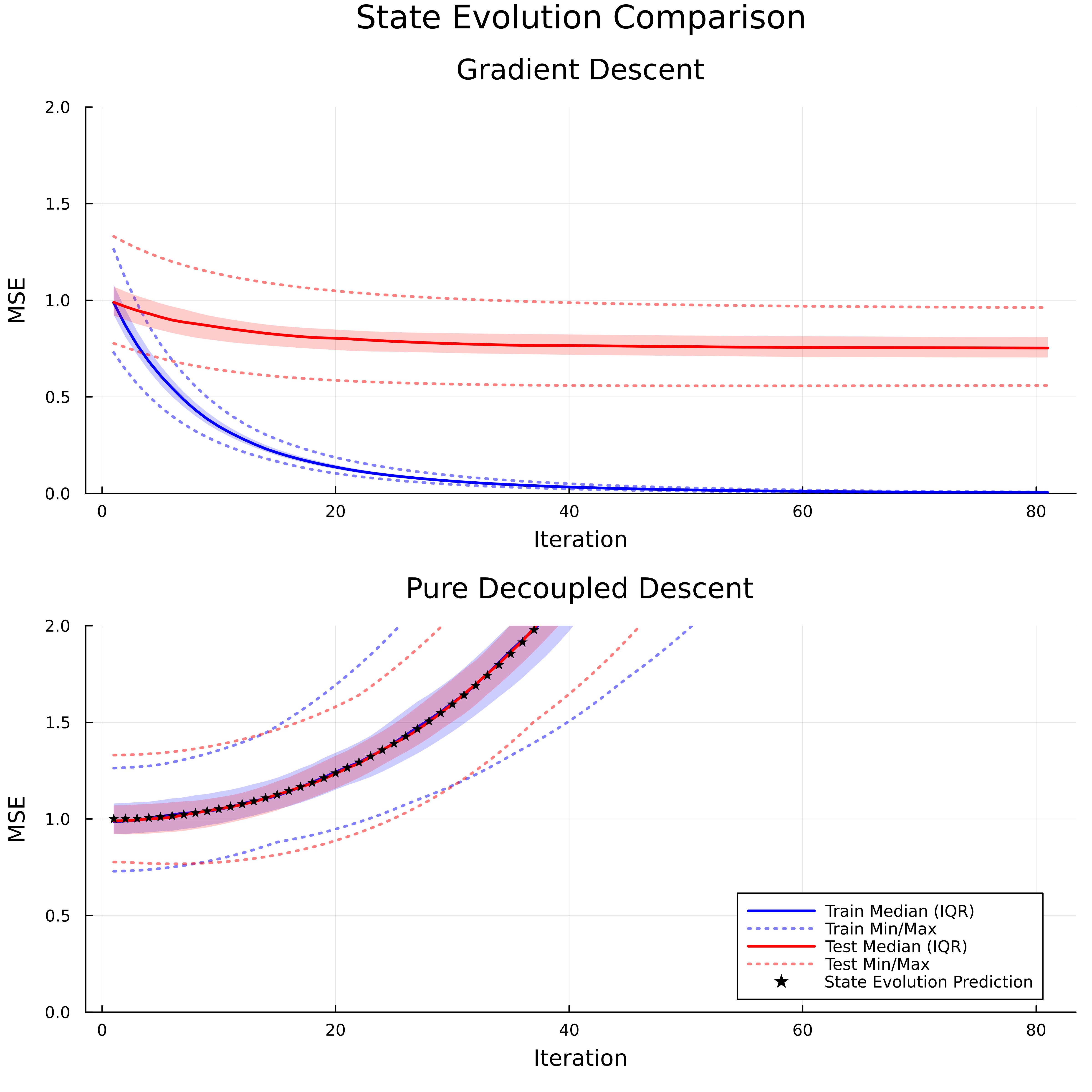}
    \caption{Summary stats for 100 signal-less regression runs ($n=200, d=800$): GD (top) vs. DD (bottom) with $\eta=0.05$. Blue/red denote train/test error; solid lines are medians, shaded areas are interquartile ranges, and dotted lines show min/max. Notice, by design, the trajectories of the train and test error are identical for pure DD while they significantly diverge for GD.}
    \label{fig:noSignalReg}
\end{figure}

This naturally leads into relying on the zero-cost validation feature of DD. We consider the following one dimensional subspace dependent on $\eta$, 
\[\eta_0 = 1-\eta,\qquad  \eta_1 = c \eta\label{eq:sweepC}.\]
We then sweep the value of $c$ over some specified range for a fixed data realization. The results of many individual replications of this experiment are given in Figure \ref{fig:noSignalSweep}. Again, for each simulation the train-test identity holds perfectly as expected and the trajectory of the test error is tracked by the state evolution prediction. Moreover, since
each of these replications used the same dataset, we can optimize over the hyperparameter $c$ by selecting whichever algorithm gets the optimal training (and thus test) error.

\begin{figure}
    \centering
    \includegraphics[width = .7\linewidth]{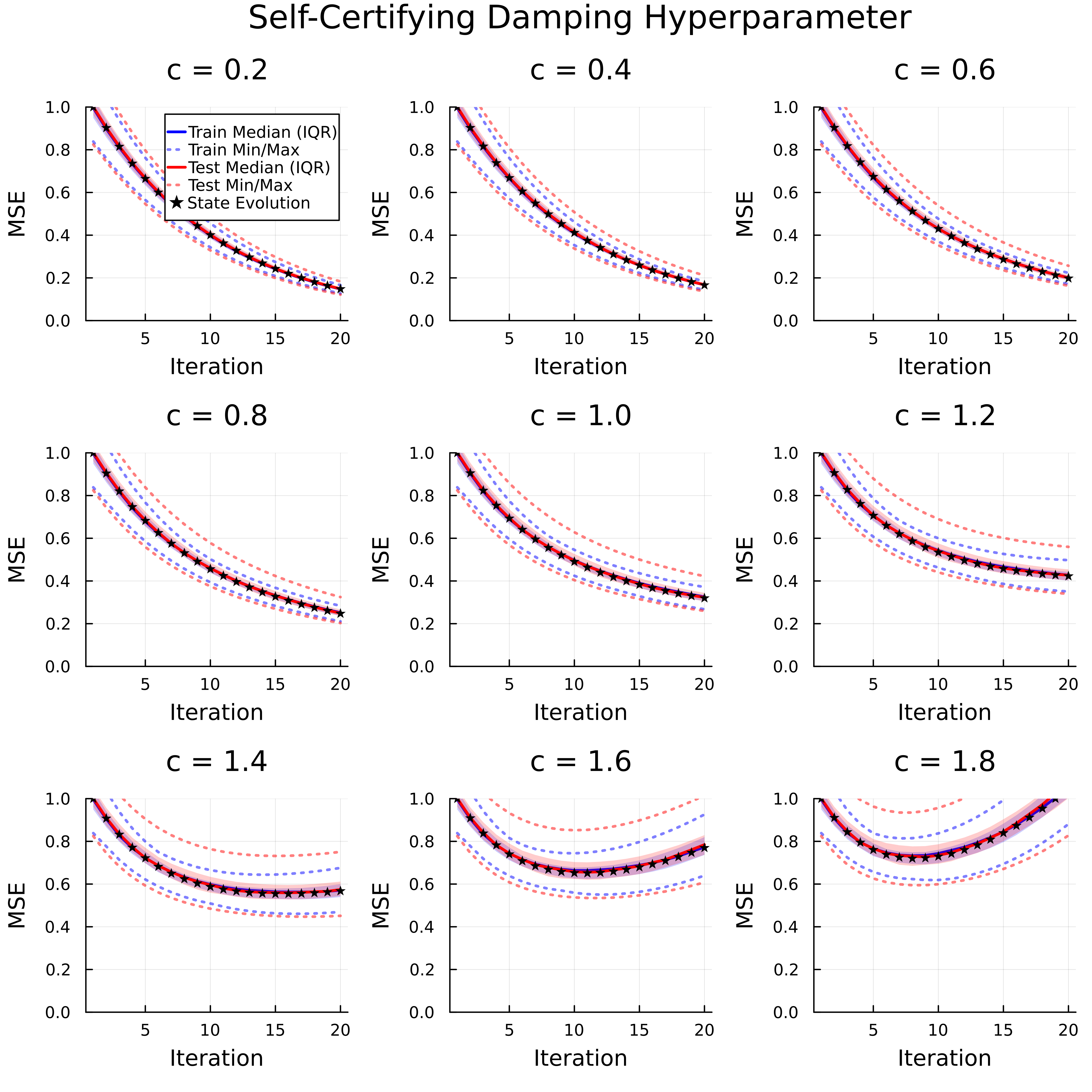}
     \caption{An example of hyperparameter tuning with DD over 100 individual replications, in each figure is the train and test error for damped pure DD algorithm in the signal-less regression problem with $n = 800, d = 800$, $\eta = 0.05$ and we sweep $c$ over the sub-space given in \eqref{eq:sweepC}. Blue lines refer to train error and red lines refer to test error; the solid line is the median error, the shaded region is the range from the $25$-th to $75$-th quartiles and the dotted lines are the minimum and maximum error runs. This represents an application where multiple instances of DD are run on the same data set and, because the train-test identity, we get a zero-cost validation method to selecting good hyperparameters that aid in training.}\label{fig:noSignalSweep}
\end{figure}

\subsection{The XOR Training Iterations}\label{sec:detailXor}

\begin{center}
\begin{tabular}{c c}
    \textbf{GD} & \textbf{Pure DD} \\[1ex]
    $\begin{aligned}
        (h^1_t, h^2_t) &= X (\theta^1_t, \theta^2_t)\\
        \hat{y}_t &= \sigma(a_t (h^1_t \odot h^2_t))\\
        \hat{h}^1_t &= (\hat{y}_t - y) \odot (a_t h^2_t)\\
        \hat{h}^2_t &= (\hat{y}_t - y) \odot (a_t h^1_t)\\
        \theta^1_{t+1} &= \theta^1_t - \eta X^\top \hat{h}^1_t\\
        \theta^2_{t+1} &= \theta^2_t - \eta X^\top \hat{h}^2_t\\
        a_{t+1} &= a_t - \eta (\hat{y}_t - y)^\top (h^1_t \odot h^2_t) / n
    \end{aligned}$
    &
    $\begin{aligned}
        (h_t^1, h_t^2) &= X (\theta^1_t, \theta^2_t) + \eta \sum_{s = 1}^{t-1} (\hat{h}_{s}^1, \hat{h}_{s}^2),\\
        \hat{y}_t &= \sigma(a_t (h^1_t \odot h^2_t)),\\
        \hat{h}^1_t &= (\hat{y}_t - y) \odot (a_t h^2_t)\\ 
        \hat{h}^2_t &= (\hat{y} - y) \odot (a_t h^1_t),\\
        H_t\!&=\!\frac{1}{n}\!\sum_{i=1}^n\!\Bigg(\!\hat{y}_{t,i}(1\!-\!\hat{y}_{t,i}) a_t^2\!
        \begin{bmatrix}(h^2_{t,i})^2 & h^1_{t,i}h^2_{t,i} \\ h^1_{t,i}h^2_{t,i} &
        (h^1_{t,i})^2 \end{bmatrix}\!\\
        &\quad+\!(\hat{y}_{t,i}\!-\!y_i)\! \begin{bmatrix} 0 & a_t \\ a_t & 0\end{bmatrix}\!\Bigg)\\
        \tilde{\theta}_{t} &= X^\top (\hat{h}_t^1, \hat{h}_t^2) - \alpha (\theta^1_{t}, \theta^2_{t}) H_t\\ 
        (\theta^1_{t+1}, \theta^2_{t+1}) &= (\theta^1_t, \theta^2_t) - \eta \tilde{\theta}_t\\
        a_{t+1} &= a_t - \eta (\hat{y}_t - y)^\top (h^1_t \odot h^2_t) / n
    \end{aligned}$
\end{tabular}
\end{center}

\begin{center}
\begin{minipage}{\textwidth}
\centering
\textbf{Damped Variant Of Pure DD (Fixed $a_t = 1$)} 

$\begin{aligned} 
        (h_t^1, h_t^2) &= X (\theta^1_t, \theta^2_t) + \eta_1 \sum_{s = 1}^{t-1} \eta_0^{(t-1)-s} (\hat{h}_{s}^1, \hat{h}_{s}^2),\\
        \hat{y}_t &= \sigma(h^1_t \odot h^2_t),\\
        \hat{h}^1_t &= (\hat{y}_t - y) \odot (h^2_t)\\ 
        \hat{h}^2_t &= (\hat{y}_t - y) \odot (h^1_t),\\
        H_t\!&=\!\frac{1}{n}\!\sum_{i=1}^n\!\Bigg(\!\hat{y}_{t,i}(1\!-\!\hat{y}_{t,i})\!
        \begin{bmatrix}(h^2_{t,i})^2 & h^1_{t,i}h^2_{t,i} \\ h^1_{t,i}h^2_{t,i} &
        (h^1_{t,i})^2 \end{bmatrix}\!+\!(\hat{y}_{t,i}\!-\!y_i)\! \begin{bmatrix} 0 & 1 \\ 1 & 0\end{bmatrix}\!\Bigg)\\
        \tilde{\theta}_{t} &= X^\top (\hat{h}_t^1, \hat{h}_t^2) - \alpha (\theta^1_{t}, \theta^2_{t}) H_t\\ 
        (\theta^1_{t+1}, \theta^2_{t+1}) &= \eta_0(\theta^1_t, \theta^2_t) - \eta_1 \tilde{\theta}_t\\
\end{aligned}$
\end{minipage}
\end{center}

\subsection{A Run-time Analysis For The MNIST Problem}\label{sec:mnistTime}

Figure \ref{fig:timeGDvDD} tracks the ``wall clock'' time per epoch (i.e. a single full batch) for our width-nine hidden layer model at both the per epoch level and for total training time. Although our implementation of DD suffers from a
$\approx L\times$ constant factor overhead, we believe the practical benefits of DD (including use of the full training set) may outweigh this computational penalty. A variant of DD mimicking SGD may further reduce this overhead (Section \ref{sec:conclusion}; future directions).

\begin{figure}
    \centering
    \includegraphics[width=0.75\linewidth]{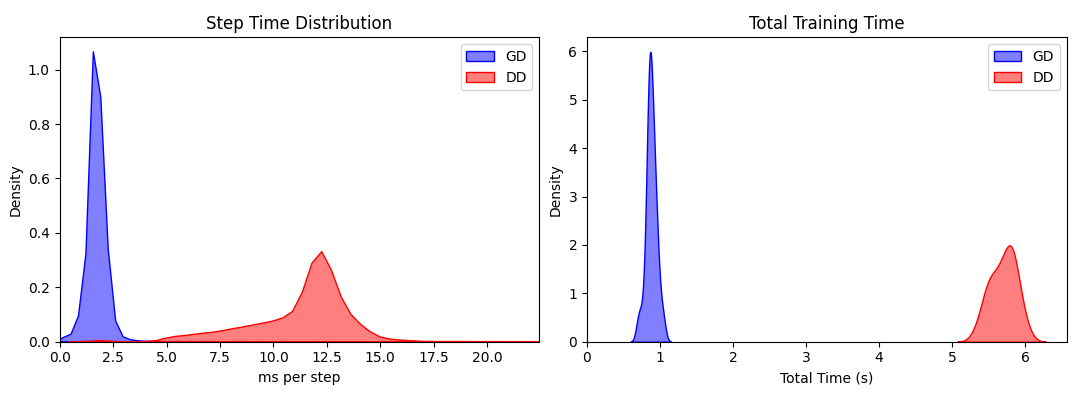}
    \caption{Empirical density estimates for runtime on a width-9 two-layer neural network (25 replications, 500 epochs). Left: Distribution of per-epoch clock time (ms) for GD vs. DD. Right: Total training time (s) per replication for the MNIST zeros/eights problem.}
    \label{fig:timeGDvDD}
\end{figure}

\subsection{Implementation of MNIST Problem With Nine Hidden
Layers}\label{sec:implementation}

Below is our implementation of the MNIST zeros/eights problem for a width-nine hidden layer model.

\begin{Verbatim}[
    commandchars=\\\{\},
    numbers=left,        % enables line numbers
    numbersep=10pt,      % space between numbers and code
    breaklines=true,     % enables automatic line wrapping
    breakindent=20pt,    % indentation for wrapped lines
    fontsize=\footnotesize
]
\PY{k+kn}{import}\PY{+w}{ }\PY{n+nn}{time}\PY{o}{,}\PY{+w}{ }\PY{n+nn}{math}\PY{o}{,}\PY{+w}{ }\PY{n+nn}{torch}\PY{o}{,}\PY{+w}{ }\PY{n+nn}{numpy}\PY{+w}{ }\PY{k}{as}\PY{+w}{ }\PY{n+nn}{np}
\PY{k+kn}{import}\PY{+w}{ }\PY{n+nn}{torch}\PY{n+nn}{.}\PY{n+nn}{nn}\PY{+w}{ }\PY{k}{as}\PY{+w}{ }\PY{n+nn}{nn}\PY{o}{,}\PY{+w}{ }\PY{n+nn}{torch}\PY{n+nn}{.}\PY{n+nn}{nn}\PY{n+nn}{.}\PY{n+nn}{functional}\PY{+w}{ }\PY{k}{as}\PY{+w}{ }\PY{n+nn}{F}
\PY{k+kn}{from}\PY{+w}{ }\PY{n+nn}{torchvision}\PY{+w}{ }\PY{k+kn}{import} \PY{n}{datasets}\PY{p}{,} \PY{n}{transforms}

\PY{k}{def}\PY{+w}{ }\PY{n+nf}{get\PYZus{}data}\PY{p}{(}\PY{n}{train}\PY{p}{)}\PY{p}{:}
    \PY{n}{d} \PY{o}{=} \PY{n}{datasets}\PY{o}{.}\PY{n}{MNIST}\PY{p}{(}\PY{l+s+s2}{\PYZdq{}}\PY{l+s+s2}{./data}\PY{l+s+s2}{\PYZdq{}}\PY{p}{,} \PY{n}{train}\PY{o}{=}\PY{n}{train}\PY{p}{,} \PY{n}{download}\PY{o}{=}\PY{k+kc}{True}\PY{p}{,} \PY{n}{transform}\PY{o}{=}\PY{n}{transforms}\PY{o}{.}\PY{n}{ToTensor}\PY{p}{(}\PY{p}{)}\PY{p}{)}
    \PY{n}{m} \PY{o}{=} \PY{p}{(}\PY{n}{d}\PY{o}{.}\PY{n}{targets} \PY{o}{==} \PY{l+m+mi}{0}\PY{p}{)} \PY{o}{|} \PY{p}{(}\PY{n}{d}\PY{o}{.}\PY{n}{targets} \PY{o}{==} \PY{l+m+mi}{8}\PY{p}{)}
    \PY{k}{return} \PY{n}{d}\PY{o}{.}\PY{n}{data}\PY{p}{[}\PY{n}{m}\PY{p}{]}\PY{o}{.}\PY{n}{float}\PY{p}{(}\PY{p}{)} \PY{o}{/} \PY{l+m+mf}{255.0}\PY{p}{,} \PY{p}{(}\PY{n}{d}\PY{o}{.}\PY{n}{targets}\PY{p}{[}\PY{n}{m}\PY{p}{]} \PY{o}{==} \PY{l+m+mi}{8}\PY{p}{)}\PY{o}{.}\PY{n}{float}\PY{p}{(}\PY{p}{)}

\PY{n}{x\PYZus{}tr}\PY{p}{,} \PY{n}{y\PYZus{}tr} \PY{o}{=} \PY{n}{get\PYZus{}data}\PY{p}{(}\PY{k+kc}{True}\PY{p}{)}
\PY{n}{x\PYZus{}test}\PY{p}{,} \PY{n}{y\PYZus{}test} \PY{o}{=} \PY{n}{get\PYZus{}data}\PY{p}{(}\PY{k+kc}{False}\PY{p}{)}

\PY{n}{N}\PY{p}{,} \PY{n}{T}\PY{p}{,} \PY{n}{H}\PY{p}{,} \PY{n}{lr}\PY{p}{,} \PY{n}{lam}\PY{p}{,} \PY{n}{reps} \PY{o}{=} \PY{l+m+mi}{800}\PY{p}{,} \PY{l+m+mi}{500}\PY{p}{,} \PY{l+m+mi}{9}\PY{p}{,} \PY{l+m+mi}{1}\PY{p}{,} \PY{l+m+mi}{30}\PY{p}{,} \PY{l+m+mi}{20}
\PY{n}{device} \PY{o}{=} \PY{l+s+s2}{\PYZdq{}}\PY{l+s+s2}{mps}\PY{l+s+s2}{\PYZdq{}} \PY{k}{if} \PY{n}{torch}\PY{o}{.}\PY{n}{backends}\PY{o}{.}\PY{n}{mps}\PY{o}{.}\PY{n}{is\PYZus{}available}\PY{p}{(}\PY{p}{)} \PY{k}{else} \PY{l+s+s2}{\PYZdq{}}\PY{l+s+s2}{cpu}\PY{l+s+s2}{\PYZdq{}}
\PY{n}{x\PYZus{}test}\PY{p}{,} \PY{n}{y\PYZus{}test} \PY{o}{=} \PY{n}{x\PYZus{}test}\PY{o}{.}\PY{n}{unsqueeze}\PY{p}{(}\PY{l+m+mi}{1}\PY{p}{)}\PY{o}{.}\PY{n}{to}\PY{p}{(}\PY{n}{device}\PY{p}{)}\PY{p}{,} \PY{n}{y\PYZus{}test}\PY{o}{.}\PY{n}{to}\PY{p}{(}\PY{n}{device}\PY{p}{)}
\PY{n}{alpha} \PY{o}{=} \PY{n}{N} \PY{o}{/} \PY{l+m+mi}{784}

\PY{k}{def}\PY{+w}{ }\PY{n+nf}{inject\PYZus{}noise}\PY{p}{(}\PY{n}{x}\PY{p}{,} \PY{n}{lam}\PY{p}{)}\PY{p}{:}
    \PY{n}{D} \PY{o}{=} \PY{n}{x}\PY{o}{.}\PY{n}{shape}\PY{p}{[}\PY{o}{\PYZhy{}}\PY{l+m+mi}{1}\PY{p}{]} \PY{o}{*} \PY{n}{x}\PY{o}{.}\PY{n}{shape}\PY{p}{[}\PY{o}{\PYZhy{}}\PY{l+m+mi}{2}\PY{p}{]}
    \PY{n}{p} \PY{o}{=} \PY{n}{torch}\PY{o}{.}\PY{n}{rand\PYZus{}like}\PY{p}{(}\PY{n}{x}\PY{p}{)}
    \PY{n}{z} \PY{o}{=} \PY{n}{torch}\PY{o}{.}\PY{n}{where}\PY{p}{(}\PY{n}{p} \PY{o}{\PYZlt{}} \PY{l+m+mf}{0.25}\PY{p}{,} \PY{o}{\PYZhy{}}\PY{n}{math}\PY{o}{.}\PY{n}{sqrt}\PY{p}{(}\PY{l+m+mi}{2}\PY{o}{/}\PY{n}{D}\PY{p}{)}\PY{p}{,} \PY{n}{torch}\PY{o}{.}\PY{n}{where}\PY{p}{(}\PY{n}{p} \PY{o}{\PYZlt{}} \PY{l+m+mf}{0.5}\PY{p}{,} \PY{n}{math}\PY{o}{.}\PY{n}{sqrt}\PY{p}{(}\PY{l+m+mi}{2}\PY{o}{/}\PY{n}{D}\PY{p}{)}\PY{p}{,} \PY{l+m+mf}{0.0}\PY{p}{)}\PY{p}{)}
    \PY{k}{return} \PY{p}{(}\PY{n}{lam} \PY{o}{/} \PY{n}{D}\PY{p}{)} \PY{o}{*} \PY{n}{x} \PY{o}{+} \PY{n}{z}\PY{o}{.}\PY{n}{to}\PY{p}{(}\PY{n}{x}\PY{o}{.}\PY{n}{device}\PY{p}{)}

\PY{k}{class}\PY{+w}{ }\PY{n+nc}{DecoupledMLP}\PY{p}{(}\PY{n}{nn}\PY{o}{.}\PY{n}{Module}\PY{p}{)}\PY{p}{:}
    \PY{k}{def}\PY{+w}{ }\PY{n+nf+fm}{\PYZus{}\PYZus{}init\PYZus{}\PYZus{}}\PY{p}{(}\PY{n+nb+bp}{self}\PY{p}{)}\PY{p}{:}
        \PY{n+nb}{super}\PY{p}{(}\PY{p}{)}\PY{o}{.}\PY{n+nf+fm}{\PYZus{}\PYZus{}init\PYZus{}\PYZus{}}\PY{p}{(}\PY{p}{)}
        \PY{n+nb+bp}{self}\PY{o}{.}\PY{n}{fc1}\PY{p}{,} \PY{n+nb+bp}{self}\PY{o}{.}\PY{n}{fc2} \PY{o}{=} \PY{n}{nn}\PY{o}{.}\PY{n}{Linear}\PY{p}{(}\PY{l+m+mi}{784}\PY{p}{,} \PY{n}{H}\PY{p}{,} \PY{n}{bias}\PY{o}{=}\PY{k+kc}{False}\PY{p}{)}\PY{p}{,} \PY{n}{nn}\PY{o}{.}\PY{n}{Linear}\PY{p}{(}\PY{n}{H}\PY{p}{,} \PY{l+m+mi}{1}\PY{p}{,} \PY{n}{bias}\PY{o}{=}\PY{k+kc}{False}\PY{p}{)}
    \PY{k}{def}\PY{+w}{ }\PY{n+nf}{forward}\PY{p}{(}\PY{n+nb+bp}{self}\PY{p}{,} \PY{n}{x}\PY{p}{,} \PY{n}{mem}\PY{o}{=}\PY{l+m+mi}{0}\PY{p}{)}\PY{p}{:}
        \PY{n}{z} \PY{o}{=} \PY{n+nb+bp}{self}\PY{o}{.}\PY{n}{fc1}\PY{p}{(}\PY{n}{x}\PY{o}{.}\PY{n}{view}\PY{p}{(}\PY{n}{x}\PY{o}{.}\PY{n}{size}\PY{p}{(}\PY{l+m+mi}{0}\PY{p}{)}\PY{p}{,} \PY{o}{\PYZhy{}}\PY{l+m+mi}{1}\PY{p}{)}\PY{p}{)} \PY{o}{+} \PY{n}{mem}
        \PY{k}{return} \PY{n+nb+bp}{self}\PY{o}{.}\PY{n}{fc2}\PY{p}{(}\PY{n}{torch}\PY{o}{.}\PY{n}{tanh}\PY{p}{(}\PY{n}{z}\PY{p}{)}\PY{p}{)}\PY{o}{.}\PY{n}{squeeze}\PY{p}{(}\PY{l+m+mi}{1}\PY{p}{)}\PY{p}{,} \PY{n}{z}

\PY{n}{dd\PYZus{}err} \PY{o}{=} \PY{n}{np}\PY{o}{.}\PY{n}{zeros}\PY{p}{(}\PY{p}{(}\PY{l+m+mi}{2}\PY{p}{,} \PY{n}{reps}\PY{p}{,} \PY{n}{T}\PY{p}{)}\PY{p}{)}
\PY{n}{dd\PYZus{}time} \PY{o}{=} \PY{p}{[}\PY{p}{]}

\PY{k}{for} \PY{n}{r} \PY{o+ow}{in} \PY{n+nb}{range}\PY{p}{(}\PY{n}{reps}\PY{p}{)}\PY{p}{:}
    \PY{n}{idx} \PY{o}{=} \PY{n}{torch}\PY{o}{.}\PY{n}{randperm}\PY{p}{(}\PY{n+nb}{len}\PY{p}{(}\PY{n}{x\PYZus{}tr}\PY{p}{)}\PY{p}{)}\PY{p}{[}\PY{p}{:}\PY{n}{N}\PY{p}{]}
    \PY{n}{x\PYZus{}n}\PY{p}{,} \PY{n}{y\PYZus{}n} \PY{o}{=} \PY{n}{inject\PYZus{}noise}\PY{p}{(}\PY{n}{x\PYZus{}tr}\PY{p}{[}\PY{n}{idx}\PY{p}{]}\PY{o}{.}\PY{n}{unsqueeze}\PY{p}{(}\PY{l+m+mi}{1}\PY{p}{)}\PY{o}{.}\PY{n}{to}\PY{p}{(}\PY{n}{device}\PY{p}{)}\PY{p}{,} \PY{n}{lam}\PY{p}{)}\PY{p}{,} \PY{n}{y\PYZus{}tr}\PY{p}{[}\PY{n}{idx}\PY{p}{]}\PY{o}{.}\PY{n}{to}\PY{p}{(}\PY{n}{device}\PY{p}{)}
    \PY{n}{x\PYZus{}te\PYZus{}n} \PY{o}{=} \PY{n}{inject\PYZus{}noise}\PY{p}{(}\PY{n}{x\PYZus{}test}\PY{p}{,} \PY{n}{lam}\PY{p}{)}

    \PY{n}{m\PYZus{}dd}\PY{p}{,} \PY{n}{mem} \PY{o}{=} \PY{n}{DecoupledMLP}\PY{p}{(}\PY{p}{)}\PY{o}{.}\PY{n}{to}\PY{p}{(}\PY{n}{device}\PY{p}{)}\PY{p}{,} \PY{n}{torch}\PY{o}{.}\PY{n}{zeros}\PY{p}{(}\PY{n}{N}\PY{p}{,} \PY{n}{H}\PY{p}{,} \PY{n}{device}\PY{o}{=}\PY{n}{device}\PY{p}{)}
    \PY{n}{t0} \PY{o}{=} \PY{n}{time}\PY{o}{.}\PY{n}{time}\PY{p}{(}\PY{p}{)}
    \PY{k}{for} \PY{n}{t} \PY{o+ow}{in} \PY{n+nb}{range}\PY{p}{(}\PY{n}{T}\PY{p}{)}\PY{p}{:}
        \PY{n}{logits}\PY{p}{,} \PY{n}{z} \PY{o}{=} \PY{n}{m\PYZus{}dd}\PY{p}{(}\PY{n}{x\PYZus{}n}\PY{p}{,} \PY{n}{mem}\PY{p}{)}
        \PY{n}{loss} \PY{o}{=} \PY{n}{F}\PY{o}{.}\PY{n}{binary\PYZus{}cross\PYZus{}entropy\PYZus{}with\PYZus{}logits}\PY{p}{(}\PY{n}{logits}\PY{p}{,} \PY{n}{y\PYZus{}n}\PY{p}{)}
        \PY{n}{h\PYZus{}hat} \PY{o}{=} \PY{n}{torch}\PY{o}{.}\PY{n}{autograd}\PY{o}{.}\PY{n}{grad}\PY{p}{(}\PY{n}{loss}\PY{p}{,} \PY{n}{z}\PY{p}{,} \PY{n}{create\PYZus{}graph}\PY{o}{=}\PY{k+kc}{True}\PY{p}{)}\PY{p}{[}\PY{l+m+mi}{0}\PY{p}{]}
        
        \PY{n}{J\PYZus{}bar} \PY{o}{=} \PY{n}{torch}\PY{o}{.}\PY{n}{stack}\PY{p}{(}\PY{p}{[}\PY{n}{torch}\PY{o}{.}\PY{n}{autograd}\PY{o}{.}\PY{n}{grad}\PY{p}{(}\PY{n}{h\PYZus{}hat}\PY{p}{[}\PY{p}{:}\PY{p}{,} \PY{n}{i}\PY{p}{]}\PY{o}{.}\PY{n}{sum}\PY{p}{(}\PY{p}{)}\PY{p}{,} \PY{n}{z}\PY{p}{,} \PY{n}{retain\PYZus{}graph}\PY{o}{=}\PY{k+kc}{True}\PY{p}{)}\PY{p}{[}\PY{l+m+mi}{0}\PY{p}{]}\PY{o}{.}\PY{n}{mean}\PY{p}{(}\PY{n}{dim}\PY{o}{=}\PY{l+m+mi}{0}\PY{p}{)} \PY{k}{for} \PY{n}{i} \PY{o+ow}{in} \PY{n+nb}{range}\PY{p}{(}\PY{n}{H}\PY{p}{)}\PY{p}{]}\PY{p}{)}

        \PY{k}{with} \PY{n}{torch}\PY{o}{.}\PY{n}{no\PYZus{}grad}\PY{p}{(}\PY{p}{)}\PY{p}{:}
            \PY{n}{m\PYZus{}dd}\PY{o}{.}\PY{n}{fc2}\PY{o}{.}\PY{n}{weight} \PY{o}{\PYZhy{}}\PY{o}{=} \PY{n}{lr} \PY{o}{*} \PY{n}{torch}\PY{o}{.}\PY{n}{autograd}\PY{o}{.}\PY{n}{grad}\PY{p}{(}\PY{n}{loss}\PY{p}{,} \PY{n}{m\PYZus{}dd}\PY{o}{.}\PY{n}{fc2}\PY{o}{.}\PY{n}{weight}\PY{p}{,} \PY{n}{retain\PYZus{}graph}\PY{o}{=}\PY{k+kc}{True}\PY{p}{)}\PY{p}{[}\PY{l+m+mi}{0}\PY{p}{]}
            \PY{n}{theta} \PY{o}{=} \PY{n}{m\PYZus{}dd}\PY{o}{.}\PY{n}{fc1}\PY{o}{.}\PY{n}{weight}
            \PY{n}{m\PYZus{}dd}\PY{o}{.}\PY{n}{fc1}\PY{o}{.}\PY{n}{weight}\PY{o}{.}\PY{n}{copy\PYZus{}}\PY{p}{(}\PY{n}{theta} \PY{o}{\PYZhy{}} \PY{n}{lr} \PY{o}{*} \PY{p}{(}\PY{n}{h\PYZus{}hat}\PY{o}{.}\PY{n}{t}\PY{p}{(}\PY{p}{)} \PY{o}{@} \PY{n}{x\PYZus{}n}\PY{o}{.}\PY{n}{view}\PY{p}{(}\PY{n}{N}\PY{p}{,} \PY{o}{\PYZhy{}}\PY{l+m+mi}{1}\PY{p}{)} \PY{o}{\PYZhy{}} \PY{n}{alpha} \PY{o}{*} \PY{p}{(}\PY{n}{J\PYZus{}bar} \PY{o}{@} \PY{n}{theta}\PY{p}{)}\PY{p}{)}\PY{p}{)}
            \PY{n}{mem} \PY{o}{+}\PY{o}{=} \PY{n}{lr} \PY{o}{*} \PY{n}{h\PYZus{}hat}\PY{o}{.}\PY{n}{detach}\PY{p}{(}\PY{p}{)}
            
        \PY{n}{dd\PYZus{}err}\PY{p}{[}\PY{l+m+mi}{0}\PY{p}{,} \PY{n}{r}\PY{p}{,} \PY{n}{t}\PY{p}{]} \PY{o}{=} \PY{n}{loss}\PY{o}{.}\PY{n}{item}\PY{p}{(}\PY{p}{)}
        \PY{k}{with} \PY{n}{torch}\PY{o}{.}\PY{n}{no\PYZus{}grad}\PY{p}{(}\PY{p}{)}\PY{p}{:} \PY{n}{dd\PYZus{}err}\PY{p}{[}\PY{l+m+mi}{1}\PY{p}{,} \PY{n}{r}\PY{p}{,} \PY{n}{t}\PY{p}{]} \PY{o}{=} \PY{n}{F}\PY{o}{.}\PY{n}{binary\PYZus{}cross\PYZus{}entropy\PYZus{}with\PYZus{}logits}\PY{p}{(}\PY{n}{m\PYZus{}dd}\PY{p}{(}\PY{n}{x\PYZus{}te\PYZus{}n}\PY{p}{)}\PY{p}{[}\PY{l+m+mi}{0}\PY{p}{]}\PY{p}{,} \PY{n}{y\PYZus{}test}\PY{p}{)}\PY{o}{.}\PY{n}{item}\PY{p}{(}\PY{p}{)}
    \PY{n}{dd\PYZus{}time}\PY{o}{.}\PY{n}{append}\PY{p}{(}\PY{n}{time}\PY{o}{.}\PY{n}{time}\PY{p}{(}\PY{p}{)} \PY{o}{\PYZhy{}} \PY{n}{t0}\PY{p}{)}
\end{Verbatim}

\subsection{Whitening Technique For CIFAR-10 Embedding Vectors} \label{sec:embedding}

We considered the following
pre-processing steps, which approach the conditions of Assumption \ref{as:main0}
to varying degrees: 
\begin{itemize}
    \item \textit{Vanilla (no whitening):} Features are rescaled by a
    $1/\sqrt{d}$ factor (assuming the embedding vectors have Gaussian entries).
    This leaves correlations between coordinates of the embedding vectors which
    our current DD algorithm does not account for.
    \item \textit{ZCA (whitening) on the train data:} We compute the empirical
    covariance matrix $\Sigma_{\text{train}} = \frac{1}{N-1} X_{\text{train}}^T
    X_{\text{train}}$ exclusively on the training set and compute the eigen-decomposition
    $\Sigma_{\text{train}} = V \Lambda V^T$. We construct the Zero-phase Component
    Analysis (ZCA) \cite{kessy2018} matrix $W_{\text{train}} =
    V(\Lambda)^{-1/2}V^T$, which is then applied to both the train and test data
    using $X' = X W_{\text{train}}$ and $\check X' = \check X W_{\text{train}}$. Finally, we apply the $1/\sqrt{d}$ normalization from the Vanilla bullet.
    \item \textit{Joint ZCA (whitening) on train and test data:} To prevent drift in the
    covariance between the train and test sets, we concatenate both data sets
    to form a joint covariance matrix $\Sigma_{\text{joint}}$. The resulting
    $W_{\text{joint}}$ is applied to all data with,
    \[\begin{bmatrix} X' \\ \check X' \end{bmatrix} = \begin{bmatrix} X
    W_{\text{joint}} \\ \check X W_{\text{joint}} \end{bmatrix}.\]
    Finally, we apply the $1/\sqrt{d}$ normalization from the Vanilla bullet.
\end{itemize}

\subsection{Data Collection and Attribution}

This work utilized both the \textbf{MNIST} and \textbf{CIFAR-10} datasets, standard benchmarks in the high-dimensional statistics and machine learning literature. 

\begin{itemize}
    \item \textbf{MNIST:} A collection of $70,000$ handwritten digit images \cite{lecun1998gradient}. The dataset is available under the \textbf{Creative Commons Attribution-Share Alike 3.0} license.
    \item \textbf{CIFAR-10:} Consists of $60,000$ $32 \times 32$ color images in 10 classes \cite{krizhevsky2009learning}. The dataset is used in accordance with the researchers' terms at the University of Toronto. 
\end{itemize}

\end{document}